\newtheorem{theorem}{Theorem}
\newtheorem{lemma}[theorem]{Lemma}
\newtheorem{corollary}[theorem]{Corollary}
\newtheorem{proposition}[theorem]{Proposition}
\newtheorem{remark}[theorem]{Remark}
\newtheorem{definition}[theorem]{Definition}
\numberwithin{theorem}{section}
\numberwithin{equation}{section}
\newtheorem{lem}[theorem]{Lemma}
\newtheorem{defn}[theorem]{Definition}
\newcommand{\mcal}{\mathcal}
\newcommand{\R}{\mathbb{R}}
\newcommand{\F}{\ensuremath{\mathcal{F}}}
\newcommand{\Ff}{\ensuremath{\mathbb{F}}}
\newcommand{\B}{\ensuremath{\mathcal{B}}}
\newcommand{\Pp}{{\mathcal{P}}_{st} }
\newcommand{\wPp}{{\widetilde{\mathcal{P}}}_{st}}
\newcommand{\esssup}{\ensuremath{{ \rm esssup} }}
\newcommand{\essmax}{\ensuremath{{\rm essmax} }}
\newcommand{\essinf}{\ensuremath{{\rm essinf} }}
\def\ben{\begin{enumerate}}
\def\een{\end{enumerate}}
\def\bit{\begin{itemize}}
\def\eit{\end{itemize}}
\title{Fully-dynamic risk-indifference pricing \\and no-good-deal bounds}
\author{Jocelyne Bion-Nadal\thanks{UMR 7641 CNRS - Ecole Polytechnique. Ecole Polytechnique, 91128 Palaiseau Cedex, France. Email: jocelyne.bion-nadal@cmap.polytechnique.fr}\: and Giulia Di Nunno\thanks{Department of Mathematics,
University of Oslo, P.O. Box 1053 Blindern, N-0316 Oslo Norway.
Email: giulian@math.uio.no}
\thanks{Norwegian School of Economics (NHH), Helleveien 30, N-5045 Bergen, Norway.}}
\date{April $15^{th}$, 2019}
\begin{document}
\maketitle

\begin{abstract}
The seller's risk-indifference price evaluation is studied. We propose a dynamic risk-indifference pricing criteria derived from a fully-dynamic family of risk measures on the $L_p$-spaces for $p\in [1,\infty]$. 
The concept of fully-dynamic risk measures extends the one of dynamic risk measures by adding the actual possibility of changing the risk perspectives over time. The family is then characterised by a double time index. Our framework fits well the study of both short and long term investments.
In this dynamic framework we analyse whether the risk-indifference pricing criterion actually provides a proper convex price system.
It turns out that the analysis is quite delicate and necessitates an adequate setting and the extension of the price operators and underlying risk measures. 

Furthermore, we consider the relationship of the fully-dynamic risk-indifference price with no-good-deal bounds. Recall that no-good-deal pricing guarantees that not only arbitrage opportunities are excluded, but also all deals that are ``too good to be true''.
We shall provide necessary and sufficient conditions on the fully-dynamic risk measures so that the corresponding risk-indifference prices satisfy the no-good-deal bounds.
We remark that the use of no-good-deal bounds also provides a method to select the risk measures and then construct a proper fully-dynamic risk-indifference price system in the $L_2$-spaces.

\vspace{2mm}
{\it Key-words:} convex prices, risk-indifference prices, time-consistency, extension theorems, dynamic risk measures, no-good-deal.
\end{abstract}

\section{Introduction}
Risk-indifference pricing was proposed as an alternative to utility indifference, as a pricing approach in incomplete markets, see e.g. \cite{Xu2006}.
The idea is to replace the performance of a portfolio described in terms of the agent's utility function with the performance based on the risk exposure measured by convex risk measures.
Indeed, instead of maximising the utility, which is often quite difficult for a trader to write down explicitly, the risk-indifference criterion minimises the risk exposure.
The point of connection between the two approaches is represented by the entropic risk measure which corresponds to an exponential-type utility.
The price of a financial claim is then defined as the amount such that the risk exposure remains the same whether the optimal trading strategy includes the claim or not.

This pricing approach was introduced on a fixed time horizon $T$ characterised by the expiration time of the claim $X$. The corresponding price $x_{0T}(X)$ itself is evaluated at the present time $s=0$.
A large part of the literature has focused on the stochastic control techniques necessary to compute the price $x_{0T}(X)$ of $X$, typically a financial derivative. 
The different  techniques depend on the choice of risk measure, on the market dynamics of the fundamental traded underlying assets, and on the information flow available.
See e.g. \cite{ES2011, GH2007, OS2009}.
Again depending on the underlying price dynamics and the information flow available, one can extend some of these results to obtain the price evaluation for any time $s\in [0,T]$.

\vspace{2mm}
On the other side, \cite{KS} presents a systematic study of the risk-indifference price system $(x_{sT}(X))_{s}$ for essentially bounded claims $X\in L_\infty(\F_T)$.
Similar to the present paper, also \cite{KS} is kept free of the particular choice of underlying price dynamics.

In \cite{KS} the evaluation of each price $x_{sT}(X)$ is linked with one element $\rho_s$ from the dynamic convex risk measure $(\rho_s)_s$. The risk measure $\rho_s$ evaluates at time $s$ the risks of financial positions at $T$.

However, particularly thinking of long time horizons, it seems reasonable to allow for the possibility of modelling changes in the risk evaluation criteria. 
For this reason, in this paper, we consider a \emph{fully-dynamic} convex risk measure, namely a family of risk measures indexed by {\it two} points in time $(\rho_{st})_{s,t}$. 
Working with fully-dynamic risk measures has important consequences both from the point of view of the modelling and the one of the mathematical analysis.

To illustrate the modelling perspective, we can consider two times $s_1, s_2$ such that $0 < s_1 < s_2 <T$ and a $\mathcal{F}_{s_2}$-measurable claim $X$.\\
The risk evaluation of $X$ at time $s_1$ via the dynamic risk measure $(\rho_s)_s$ is obtained by regarding $X$ as a $\mathcal{F}_T$-measurable claim and computing $\rho_{s_1}(X)$.\\
On the other side the risk evaluation of $X$ at $s_1$ via the fully-dynamic risk measure $(\rho_{st})_{s,t}$ is given using the risk measure pertaining to the time interval $[s_1,s_2]$, i.e. it is $\rho_{s_1s_2}(X)$. \\
Hence, even if $\rho_{s_1}(X) = \rho_{s_1T}(X)$, we have, in general, that $\rho_{s_1s_2}(X) \ne \rho_{s_1T}(X)$.
Summarising, when using a dynamic risk measure $(\rho_s)_s$, one is implicitly introducing the modelling choice to consider any risk $X$ that is $\mathcal{F}_t$-measurable with $t<T$ as a risk occurring at $T$ and thus evaluating it according to 
$$
\rho_s(X) = \rho_{sT}(X), \qquad s \leq t < T.
$$
It is important to think of the reasonability of this modelling choice particularly when dealing with long time horizons $[0,T]$.
In this paper, this modelling choice is denoted as ``restriction property" for the fully-dynamic risk measure.

From the mathematical analysis point of view, the use of fully dynamic risk measures extends and clarifies the discussion on the concepts of time-consistency with respect to risk evaluations.
To explain briefly, the notion of {\it strong time-consistency} for a fully-dynamic risk measure $(\rho_{st})_{s,t}$ was introduced in \cite{BN01} and there also completely characterised.
In the present work, we introduce the concept of {\it time-consistency} for the fully-dynamic risk measure $(\rho_{st})_{s,t}$ (see Definition \ref{def-fdtc}).
Both concepts are compared with the meaning of time-consistency for the dynamic risk measure $(\rho_{s})_{s}$, up to now mostly frequently used in the literature, see e.g. \cite{AP}. 
In studying this comparison one can appreciate how different is to work with fully-dynamic risk measures compared to dynamic risk measures and the results obtained in the first case cannot be recovered in the second case, without making additional strong a priori assumptions.
We find out that the relationship among all these concepts is delicate and details are studied in this work, see Proposition \ref{Prop:relationtc} and Corollary \ref{Corollary2.5}. 

\vspace{2mm}
Hereafter we give a brief overview of the structure and results of the present work.
The paper is organised in two main parts: Section \ref{SecRI} and Section \ref{SecNGD}, which are divided in subsections.

\vspace{2mm}
Section \ref{SecRI} deals with risk-indifference pricing.
After the formal introduction of the framework of convex price operators and convex price systems, we proceed with the specifications of risk-indifference price operators, which are  defined via risk measures.

\vspace{2mm}
In this paper we work with fully-dynamic risk measures $(\rho_{st})_{s,t}$ that are assumed strong time-consistent and which are {\it not} assumed normalised a priori, nor satisfying the restriction property.

\vspace{2mm}
In Subection 2.1 the relevant concepts about $(\rho_{st})_{s,t}$ are introduced and discussed. As already announced, we study the time-consistency and the strong time-consistency, as well as the relationship between fully-dynamic risk measures and dynamic risk measures.
Furthermore, the two concepts of a {\it dominated} and a {\it sensitive} risk measure are introduced with the corresponding results. The subsection is completed by some examples of strong time-consistent fully-dynamic risk measures.

\vspace{2mm}
Subsection 2.2 deals with risk-indifferent pricing and the price operator $x_{st}$ is introduced by means of $\rho_{st}$ belonging to the strong time-consistent fully-dynamic risk measure $(\rho_{st})_{s,t}$.
The properties of $x_{st}$ are studied to see if the risk-indifference price is indeed a convex price operator.

In our study we find that for $p\in [1,\infty)$ under mild assumptions, a risk-indifference price operator $x_{st}$ is a mapping from $L_p(\F_t)$ into the space of $\F_s$-measurable random variables, but the operators take values in $L_p(\F_s)$ only on a restricted domain $L_t:= Dom \, x_{st} \subsetneq L_p(\F_t)$. 

On the contrary, if $p=\infty$, then the operator $x_{st}$ is well-defined on $L_\infty(\F_t)$ with values in $L_\infty(\F_s)$.
The study of the Fatou property (and, as we see in Subsection 2.3, also the study of time-consistency for the family $(x_{st})_{s,t}$) is then far from trivial when $p<\infty$.
We tackle the issue by extending the operator $x_{st}$ from $Dom \, x_{st} \subsetneq L_p(\F_t)$ to an appropriate Banach space $L^c_t$. 
To achieve such an extension, which maintains the same structure of a risk-indifference operator, we have first to extend the risk measures themselves. 
We stress that we have to perform the extension of the \emph{whole} family of risk measures in a careful way, so to obtain that the new extended fully-dynamic risk measure is strongly time-consistent.
For this we make use of the notions of \emph{sensitive} and \emph{dominated} risk measures.
The domination is inspired by sandwich extension theorems as in \cite{BNDN2}
 and it is used in \cite{BNK} to obtain a representation of risk measures. 
 The sensitivity is linked to the relevance as introduced in \cite{KS} and it allows to write the representation of risk measures in terms of $P$-equivalent probability measures. See \cite{FS04}. 
The extended risk-indifference operator is obtained directly from the extended risk measures. It is a mapping defined on $L^c_t$, it takes values in $L^c_s$, and it fulfils the requirements to be a convex price operators.

Here we also emphasise that the use of strong time-consistency of $(\rho_{st})_{s,t}$ is crucial in obtaining the not-standard representations of the risk measures in Proposition \ref{Prop2rep} (see equations \eqref{Rep2}, \eqref{Rep3}), which are in their turn a crucial element for the arguments proposed later. Also, it is the strong time-consistency that allows to assume sensitivity and dominatoion only on the risk measure $\rho_{0T}$ so to obtain the above mentioned representations and subsequent extensions (see Theorem \ref{thmextrho}).

\vspace{2mm}
Subsection 2.3 deals with the whole family of risk-indifference price operators $(x_{st})_{s,t}$ defined as mappings $x_{st}$ on $L^c_t$ with values in $L^c_s$.
Here we study the time-consistency of $(x_{st})_{s,t}$ and we find that the family of operators $(x_{st})_{s,t}$ on $(L^c_t)_t$ is {\it time-consistent} and thus it is a {\it convex price system}, according to Definition \ref{defCPS}.
By this we can conclude that working with risk indifference pricing from a dynamic perspective is actually a delicate affair and this should be kept in mind when studying techniques of computation of the prices themselves.

We complete the study showing that, for all $t$ and $X\in L^c_t$, the processes $x_{st}(X)$, $s\in [0,t]$, admit c\`adl\`ag modification. This also ensures that stopping times can be considered.

\vspace{2mm}
Working with the Banach spaces $(L^c_t)_t$ is mathematically effective, though not straightforward. For this reason in the last part of the paper, Section \ref{SecNGD}, we propose to work with no-good-deal bounds on prices, so to obtain a risk-indifference price system $(x_{st})_{s,t}$ with domain $(L_2(\F_t))_t$. No-good-deal bounds have been introduced in a dynamic setting in \cite{BNDN}. 
Here we study first the representation of \emph{convex no-good-deal prices} and then we combine this with the risk-indifference prices.

Our final result gives a characterisation of those risk measures for which the risk-indifference prices are actually no-good-deal.


\section{Risk-indifference pricing}
\label{SecRI}

The claims and all financial positions belong to an $L_p$-space for $p\in [1,\infty]$. We study first the properties of $x_{st}$ for each $s\leq t$ to verify under what circumstances $x_{st}$ is a proper convex price operator, and then we study the time-consistency of the whole family $(x_{st})_{s,t}$ to obtain a convex price system.

\vspace{2mm}
Specifically, for the time horizon $[0,T]$, with $T \in (0,\infty)$, let $(\Omega,\F, ({\cal F}_t)_t,P)$ be a complete filtered probability space. 
We assume that $\F_0$ is trivial.
For any $p \in [1,\infty]$ and a $\sigma$-algebra $\B$ of events of $\Omega$, we consider the space $L_p(\B):=L_p(\Omega,\B, P)$ of real valued random variables with the finite norms:
\[
\Vert X \Vert_p := 
\begin{cases}
(E[ \vert X\vert^p ] )^{1/p}, \qquad& p \in [1, \infty),\\
\esssup \vert X \vert, \qquad& p = \infty.
\end{cases}
\]
We denote the expectation under $P$ by $E$. If other measures are used they will be specified.
For any time \(t\in[0,T]\), we consider the \emph{convex} subset:
\begin{equation}
\label{inclusion}
L_t\subseteq L_p(\F_t) : \quad L_t\subseteq L_T.
\end{equation}

The prices are in general defined on the domain $L_t$ in the set of purchasable assets. Note that, in general, \(L_t \subseteq L_p(\F_t)\) for some \(t\in [0,T]\).
In view of the financial application, it is justified to require that:
\begin{enumerate}
\item[A1)]
$1 \in L_t$,
\item[A2)]
for the $\sigma$-algebra $\F_t $ we have the property that $1_A X \in L_t$ for every $A \in \F_t$ and every $X \in L_t$.
\end{enumerate}

To take care of the value of money over time, we consider a num\'eraire of reference $R_t$, $t \in [0,T]$, used as discounting factor.
This refers to a money market account. In this work we set $R_t \equiv 1$ to symplify the notation as we are dealing with discounted values. Hence we shall just write about \emph{prices}, though effectively, we refer to \emph{discounted prices}.

\bigskip
\begin{definition}
\label{convex-price}
For any $s,t \in [0,T]$ with $s \leq t$, the operator 
\begin{equation}
\label{operator}
x_{st}: L_t \longrightarrow L_s
\end{equation}
is a \emph{{\bf (convex) price operator}} if it is:
\begin{itemize}
\item
\emph{monotone}, i.e. for any $X',\,X''\,\in L_t$ such that $X'\geq X''$,
\begin{equation*}
  x_{st}(X')\geq   x_{st}(X''),\label{mono}
\end{equation*}
\item 
\emph{convex}, i.e. for any $X',\,X''\,\in L_t$ and $\lambda\in [0,1]$, 
\begin{equation*}
  x_{st} \big(\lambda X'+ (1-\lambda) X''\big) \leq \lambda x_{st}(X')+ (1-\lambda) x_{st}(X'')
\label{addi}
\end{equation*}
\item 
\emph{it has the Fatou property}, i.e. for any $X\in L_t$ and any dominated sequence $(X_n)_n$ in $ L_t$ (i.e. there exists $Y\in L_p(\F_t)$ such that 
$\vert X_n \vert \leq Y$ for all $n$) which is converging $P$-a.s. to $X$, we have:
\begin{equation*}
\liminf_{n\to \infty} x_{st}(X_n) \geq x_{st}(X)
\label{lsc}
\end{equation*}
\item 
\emph{ weak \(\F_s \)-homogeneous}, i.e. for all \(X\in L_t\)
\begin{equation*}
  x_{st} ( 1_A X)= 1_A   x_{st}(X),  \quad A \in \F_s,
\label{homo}
\end{equation*}
\item
\emph{projection property}
\begin{equation*}
\label{normal}
x_{st} (X)=X, \quad X\in L_p(\F_s)\cap L_t.
\end{equation*}
\end{itemize}
In particular we have $x_{st}(0)=0$, $x_{st}(1)=1$, and $x_{tt}(X) = X$, $X \in L_t$.
\end{definition}

We remark that the ${\cal F}_s$-homogeneity is called ``locality'' in some other references in the literature, see for example \cite{BCP}.

\bigskip
We remark that, if $p\in [1,\infty)$ and the operator $x_{st}$ is monotone and linear (as in \cite{ADR} and \cite{DE08}), the \emph{weak $\F_s$-homogeneity} is equivalent 
to \emph{$\F_s$-homogeneity}, i.e. for all \(X\in L_t\), $x_{st}(\xi X)= \xi   x_{st}(X)$ for all  \(\xi \in L_p(\F_s)\) such that \(\xi X\in L_t\). 
If $p=\infty$ and the operator is linear and semi-continuous, then the same result holds (see \cite{BNDN}).

\bigskip
For the whole family of price operators we introduce two concepts of 
time-consistency. 
We remark that these concepts are here specifically given for price operators and they are only to be compared, but not the same as those of time-consistency for risk measures (cf. Definition \ref{def-fdtc}).

\begin{defn}\label{def-consist}
The family of convex price operators $x_{st}: L_t \longrightarrow L_s$, $s,t\in [0,T]$: $s \leq t$, is 
\begin{itemize}
 \item 
\emph{strong time-consistent}  if, for all $r,s, t\in [0,T]$: $r \leq s \leq t$, we have that
\begin{equation}\label{consist}
x_{rt}(X)=x_{rs}\big(x_{st}(X)\big), \quad  X\in L_t,
\end{equation}
\item 
\emph{time-consistent}  if, for all $t\in [0,T]$, 
$$
x_{st}(X) \geq x_{st}(Y), \quad \textrm{ for } X,Y \in L_t,
$$
implies that
$$
x_{rt}(X) \geq x_{rt}(Y), \quad \textrm{ for all } r \leq s \leq t.
$$
\end{itemize}
\end{defn}
The notion of \emph{strong time-consistency} is naturally extending the very property of linear prices, which comes from the chaining of conditional expectations. This notion is here distinguished from the \emph{time-consistency}, which is a weaker notion in line with the literature of family of convex operators such as convex risk measures.

We observe that \emph{strong time-consistency} of prices implies the \emph{time-consistency}. This is easily derived by means of the projection property.

\bigskip
Finally, a price system is defined as follows:
\begin{defn}
A \emph{\bf (convex) price system} $(x_{st})_{s,t}$ on $(L_t)_t$ is the family of convex price operators \eqref{operator} if it is 
at least time-consistent.
\label{defCPS}
\end{defn}

\vspace{3mm}
We study dynamic risk-indifference prices in the $L_p$ setting, $p \in [1,\infty]$. 
As we shall see, for the fixed times $s,t: s \leq t$, each risk-indifference price evaluation is substantially a convex price operator.
However, we have to be cautious with the definition in order to guarantee the Fatou property and then the time-consistency of the whole price system.

This section is divided in three parts.
In the first we introduce and study properties of a family of risk measures $(\rho_{st})_{s,t}$ indexed by time to achieve a \emph{fully-dynamic structure}. 
Then we study the corresponding risk-indifference price evaluations $(x_{st})_{s,t}$. Here we discuss the very definition of 
$x_{st}$ both in terms of domain and range. We shall see that while in the case $p=\infty$, $x_{st}$ is a well-defined convex price, 
for the case $p<\infty$ we have to introduce a new framework beyond the $L_p$-setting. 
It is only in this new framework that we can show that the risk-indifference evaluations 
$x_{st}$ have all the properties to be 
a convex price operator Definition \ref{convex-price}. In particular we obtain the Fatou property and the dual representation of $x_{st}$. In the last parts of this section we study the time-consistency of the family $(x_{st})_{s,t}$ and the regularity of the trajectories. 

\bigskip
\subsection{Fully-dynamic risk measures, domination, sensitivity.} 
At first we introduce the notion of \emph{fully-dynamic risk measure}. 
This object was already considered in \cite{BN01} under the name of dynamic risk measure. 
Then we review the different concepts of \emph{time-consistency} and we deal with the notions of \emph{domination} and \emph{sensitivity.}

\vspace{2mm}
\begin{definition}
A \emph{fully-dynamic} convex risk measure $(\rho_{st})_{s,t}$ is a family of convex risk measures indexed by two points in time $s,t$ such that $0 \leq s \leq t \leq T$.

For any two points of time $s \leq t$, the mapping $\rho_{st}: L_{p} (\F_t) \longrightarrow L_{p}(\F_s)$ 
satisfies the following properties:
\begin{itemize}
\item
\emph{monotonicity}, i.e. $\rho_{st}(X') \leq \rho_{st}(X'')$, for any $X' \geq X''$ in $L_{p}(\F_t)$,
\item
\emph{convexity}, i.e. for any $X', X'' \in L_{p}(\F_t)$ and $\lambda \in [0,1]$, $\rho_{st}(\lambda X' + (1-\lambda)X'') 
\leq \lambda \rho_{st}(X') + (1-\lambda) \rho_{st}(X'')$,
\item
\emph{$\F_s$-translation invariance}, i.e. for any $X \in L_{p}(\F_t)$, $\rho_{st}(X + f) = \rho_{st}(X) - f$, for all $f \in L_{p}(\F_s)$.
\end{itemize}
Moreover,
\begin{itemize}
\item
If $p = \infty$, then we assume that \emph{continuity from below} holds with convergence point-wise, i.e. for any 
$X \in L_{\infty}(\F_t)$ and any sequence $(X_n)_n$ such that 
$X_n \uparrow X$ $P$-a.s., then $\rho_{st} (X_n) \downarrow \rho_{st}(X)$ $P$-a.s. 
\end{itemize}
\end{definition}

\begin{remark}
If $p <\infty$, we recall that the risk measure $\rho_{st}$ is always \emph{continuous from below and continuous from above} $P$-a.s. (and in the $L_{p}$-convergence).
In fact, recall that for any sequence $X_n$ increasing to $X$, the sequence $\rho_{st}(X_n)$ is decreasing.  Define $\rho(X):=E(\rho_{st}(X))$. Then $\rho$ is a static risk measure. Applying the results of \cite{BF2009}, \cite{FiSv}, we have that $\rho(X_n) \downarrow \rho(X)$. The above mentioned monotonicity of $\rho_{s,t}$ implies that 
$\rho_{st}(X_n) \downarrow \rho_{st}(X)$ $P$-a.s., which in turns implies also the $L_p$-convergence.
\end{remark}

We stress that we \emph{do not assume a priori} that the risk measures $\rho_{st}$ are normalised.\\
We observe that the risk measure $\rho_{st}$  satisfies \emph{weak $\F_s$-homogeneity}, i.e.
\begin{equation}
 1_A\rho_{st}(X)=1_A\rho_{st}(1_AX), \qquad   X \in L_{p}(\F_t),\quad A \in \F_s.
 \label{eqwh}
\end{equation}
Indeed for $p=\infty$, monotonicity and translation invariance  imply (\ref{eqwh}) for all $X \in L_{\infty}(\F_t)$. 
See \cite{CDK2006} Proposition 3.3 and \cite{KS} Section 3.
In the case $p<\infty$,  the weak $\F_s$-homogeneity for all $X \in L_{p}(\F_t)$ is then guaranteed by the continuity from below.

\bigskip
\begin{proposition}
For any $s \leq t$ the risk measure above admits the following representation:
\begin{equation}
\label{rm-representation0}
\begin{split}
\rho_{st} (X) &=  \underset{\substack{Q \ll P : \\  Q_{\vert \F_s} = P_{\vert \F_s}}}{\essmax} \Big(E_Q[-X\vert \F_s] - 
\alpha_{st}(Q)\Big)  \\
&= \underset{\substack{Q \ll P : \\  E_P [\alpha_{st}(Q)] < \infty }}{\essmax} \Big(E_Q[-X\vert \F_s] - \alpha_{st}(Q)\Big) , \quad X \in L_{p}(\F_t),
\end{split}
\end{equation}
where $\alpha_{st}$ is \emph{the minimal penalty}, i.e.
\begin{equation}
\label{minpen}
\alpha_{st}(Q) := \underset{X \in L_{p}(\F_t)}{\esssup} \Big(E_Q [-X\vert \F_s] - \rho_{st}(X) \Big).
\end{equation}
\end{proposition}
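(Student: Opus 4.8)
The plan is to prove the stated equality as two inequalities and then to identify the index set over which the essential maximum is taken. Throughout I fix $s\le t$ and abbreviate $\alpha:=\alpha_{st}$. First I would dispose of the easy ``$\ge$'' direction, which uses nothing beyond the definition \eqref{minpen} of the minimal penalty. By that definition, for every admissible $Q$ and every fixed $X\in L_p(\F_t)$ one has $P$-a.s.
\[
\alpha(Q)\ \ge\ E_Q[-X\vert\F_s]-\rho_{st}(X),
\]
and rearranging yields $\rho_{st}(X)\ge E_Q[-X\vert\F_s]-\alpha(Q)$. Taking the essential supremum over $Q$ on the right preserves the inequality, so $\rho_{st}(X)$ dominates the candidate representation.

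The substance lies in the reverse inequality. Here I would invoke the conditional robust-representation argument for conditional convex risk measures, the technical fuel being the regularity assumptions: continuity from below for $p=\infty$, and the continuity from below and above recorded in the Remark for $p<\infty$. Concretely, I would pass to the scalarisation $\rho(X):=E[\rho_{st}(X)]$, which by monotonicity, convexity and $\F_s$-translation invariance is a static convex risk measure on $L_p(\F_t)$, and apply the classical Fenchel--Moreau/dual representation on the pair $(L_p,L_q)$ for $p<\infty$ and the F\"ollmer--Schied--Delbaen theory for $p=\infty$. The role of continuity from below here is exactly to guarantee that the representing objects are genuine $\sigma$-additive measures $Q\ll P$ rather than merely finitely additive ones, so that the penalty $\alpha$ in \eqref{minpen} is indeed the relevant conjugate. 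I would then localise the resulting scalar identity by means of the weak $\F_s$-homogeneity \eqref{eqwh}: testing against $1_AX$ for $A\in\F_s$ and using locality upgrades the scalar equality, valid on every $A\in\F_s$, to the $P$-a.s.\ conditional inequality $\rho_{st}(X)\le\esssup_Q\bigl(E_Q[-X\vert\F_s]-\alpha(Q)\bigr)$.

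The delicate point, and what I expect to be the main obstacle, is promoting the $\esssup$ to an \emph{essmax} (attainment) and identifying the two index sets. For attainment I would show that the penalised family $\{E_Q[-X\vert\F_s]-\alpha(Q)\}_Q$ is upward directed: given $Q_1,Q_2$ one pastes their conditional densities along the $\F_s$-set $A=\{E_{Q_1}[-X\vert\F_s]-\alpha(Q_1)\ge E_{Q_2}[-X\vert\F_s]-\alpha(Q_2)\}$ to obtain an admissible $Q_3$ whose penalised value is the pointwise maximum; this is cleanest within $\{Q: Q_{\vert\F_s}=P_{\vert\F_s}\}$, where pasting amounts to $D_3=1_AD_1+1_{A^c}D_2$ for the conditional densities $D_i$ with $E[D_i\vert\F_s]=1$, and locality of $\alpha$ (again via \eqref{eqwh}) gives $\alpha(Q_3)=1_A\alpha(Q_1)+1_{A^c}\alpha(Q_2)$. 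Upward-directedness lets me extract an increasing sequence attaining the essential supremum, and the Fatou-type continuity of $\rho_{st}$ promotes its limit to an attained maximum. Finally I would check that the two stated indexings yield the same value: measures with $E_P[\alpha(Q)]=\infty$ contribute $-\infty$ and may be discarded, while any $Q$ can be modified on $\F_s$ so that $Q_{\vert\F_s}=P_{\vert\F_s}$ without altering $E_Q[\,\cdot\,\vert\F_s]$ or decreasing the penalised value, since $\F_s$-translation invariance forces the penalty of a measure disagreeing with $P$ on $\F_s$ to be no smaller. This identifies $\{Q\ll P: Q_{\vert\F_s}=P_{\vert\F_s}\}$ and $\{Q\ll P: E_P[\alpha(Q)]<\infty\}$ as equally valid indexing sets and completes the proof.
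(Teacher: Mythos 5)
Your broad strategy---pass to the scalarisation $\rho(X):=E[\rho_{st}(X)]$, invoke the static duality results (F\"ollmer--Schied for $p=\infty$, Filipovic--Svindland for $p<\infty$), then return to the conditional statement---is the same as the paper's, and your ``easy'' inequality is exactly the paper's first step. The genuine gap is in your attainment argument, which is the actual content of the proposition (it asserts an $\essmax$, not an $\esssup$). Upward directedness of the family $\{E_Q[-X\vert\F_s]-\alpha_{st}(Q)\}_Q$ via pasting of densities is correct, but it only produces an \emph{increasing sequence} of penalised values converging $P$-a.s.\ to the essential supremum; it does not produce a single measure $Q^*$ achieving it. The pasted measures have no reason to converge to an admissible measure, and even if their densities converged in some topology you would need compactness of the index set and upper semicontinuity of $Q\mapsto E_Q[-X\vert\F_s]-\alpha_{st}(Q)$, neither of which you establish. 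The ``Fatou-type continuity of $\rho_{st}$'' you invoke is continuity in the \emph{position} variable $X$, not in the measure $Q$, so it cannot ``promote the limit to an attained maximum''. Attainment is genuinely hard by this route: for $p=\infty$ the existence of a maximiser is equivalent to continuity from below (Jouini--Schachermayer--Touzi type results) and its proof requires weak compactness in $L_1$ (Dunford--Pettis), not monotone limits along a directed family.

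The paper circumvents this entirely: the cited static theorems already give the scalar representation \emph{with an attained maximum}, $\rho(X)=\max_{Q\ll P}\big(E_Q(-X)-\alpha(Q)\big)$ at some $Q_X$, together with the bridge facts that $\alpha(Q)<\infty$ forces $Q_{\vert\F_s}=P_{\vert\F_s}$ and $E[\alpha_{st}(Q)]\le\alpha(Q)$. Then a single sandwich finishes everything:
\begin{equation*}
\rho(X)=E_{Q_X}(-X)-\alpha(Q_X)\;\le\; E\big[E_{Q_X}(-X\vert\F_s)-\alpha_{st}(Q_X)\big]\;\le\; E[\rho_{st}(X)]=\rho(X),
\end{equation*}
where the last inequality is your easy direction; since $E_{Q_X}(-X\vert\F_s)-\alpha_{st}(Q_X)\le\rho_{st}(X)$ $P$-a.s.\ and the two sides have equal expectations, they coincide $P$-a.s. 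This yields the reverse inequality and the $\essmax$ in one stroke, with no localisation and no pasting/limit argument. Note also that your localisation step would in any case require the comparison between the scalar penalty $\alpha(Q)$ and $E[\alpha_{st}(Q)]$, which you never address, and that your claim that translation invariance ``forces the penalty of a measure disagreeing with $P$ on $\F_s$ to be no smaller'' is imprecise: the correct statement is that finiteness of the scalar penalty already rules such measures out.
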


\begin{proof}
The arguments are substantially given in \cite{BN-preprint} (written for the case $p = \infty$). Here we propose a self-contained proof.

The inequality 
$\rho_{st}(X) \geq \underset{\substack{Q \ll P : \\  Q_{\vert {\cal F}_s} = P_{\vert {\cal F}_s}}}{\esssup} \Big(E_Q[-X\vert {\cal F}_s]- \alpha_{st}(Q)\Big)   $ 
follows from the definition of the minimal penalty.

Define $\rho(X):=E(\rho_{s,t}(X))$,  $\rho: L_p({\cal F}_t)  \rightarrow  \R$. 
Then $\rho$ is a static risk measure, continuous from below. The representation with a max 
\begin{equation}
\rho (X) = \underset{\substack{Q \ll P }}{\max} (E_Q(-X) - \alpha(Q)) 
\label{eqmax0}
\end{equation}
where $\alpha(Q)$ is the minimal penalty for $\rho$
follows from \cite{FS04} in the case $p=\infty$ and from \cite{FiSv} when $p<\infty$.
As in the proof of the dual representation in the case of continuity from above, (cf \cite{DS2005} or \cite{AP}), $\alpha(Q)<\infty$ implies  that $Q \ll P$, $Q_{\vert \F_s} = P_{\vert \F_s}$ and $E(\alpha_{st}(Q) \leq \alpha(Q)$.
For any given $X$, let $Q_X$ be an argmax in (\ref{eqmax0}).
Then we have
\begin{align*}
\rho(X)=&E_{Q_X}(-X)-\alpha(Q_X)) \\
\leq& E[E_{Q_X}(-X|{\cal F}_s)-\alpha_{st}(Q_X)] \\
\leq& E(\rho_{st}(X))=\rho(X)
\end{align*}
This proves that $E_{Q_X}(-X|{\cal F}_s)-\alpha_{st}(Q_X)=\rho_{st}(X)$.
\end{proof}

\begin{remark}
\label{strong convexity}
We observe that the risk measures $\rho_{st}$ satisfy a stronger convexity property, i.e. for any $X,Y \in L_p(\F_t)$ it is
$$
\rho_{st}(\Lambda X +(1-\Lambda)Y) \leq \Lambda \rho_{st}(X) + (1-\Lambda)\rho_{st}(Y), \quad \Lambda \in L_\infty(\F_s), 0 \leq \Lambda \leq 1.
$$
The property follows from \cite{KS} Remark (2), page 602. 
\end{remark}
 
\begin{lemma}
\label{lemmaA}
Given the representation \eqref{rm-representation0}, let $Q$ be a probability measure such that 
$E(\alpha_{st}(Q))< \infty$. 
Then  we have 
\begin{eqnarray}
\alpha_{st}(Q)
&=& \underset{X \in L_{\infty}({\cal F}_t)}{\esssup } \:\Big(E_Q(-X|{\cal F}_s)- \rho_{st}(X)\Big).
\label{peninfty}
\end{eqnarray}
\end{lemma}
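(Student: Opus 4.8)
Since $L_\infty(\F_t)\subseteq L_p(\F_t)$, comparing \eqref{peninfty} with the definition \eqref{minpen} shows at once that $\esssup_{X\in L_\infty(\F_t)}(E_Q[-X\mid\F_s]-\rho_{st}(X))\le\alpha_{st}(Q)$, and for $p=\infty$ the supremum ranges over the same set so there is nothing to prove. I would thus fix $p<\infty$ and establish the reverse inequality by showing that, for \emph{every} $X\in L_p(\F_t)$, one has $E_Q[-X\mid\F_s]-\rho_{st}(X)\le\esssup_{Y\in L_\infty(\F_t)}(E_Q[-Y\mid\F_s]-\rho_{st}(Y))$ $P$-a.s.; taking the essential supremum over $X$ on the left-hand side then reproduces $\alpha_{st}(Q)$ and closes the estimate.

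The crux, and the step I expect to be the main obstacle, is that a priori an $X\in L_p(\F_t)$ need not be $Q$-integrable, so that $E_Q[-X\mid\F_s]$ may fail to be finite; here the hypothesis $E(\alpha_{st}(Q))<\infty$ must be put to use. I would test the penalty \eqref{minpen} against the bounded variables $Y_n:=-(|X|\wedge n)\in L_\infty(\F_t)$, which yields $\alpha_{st}(Q)\ge E_Q[|X|\wedge n\mid\F_s]-\rho_{st}(-(|X|\wedge n))$. Monotonicity of $\rho_{st}$ together with $-(|X|\wedge n)\ge -|X|$ gives $\rho_{st}(-(|X|\wedge n))\le\rho_{st}(-|X|)$, so that $E_Q[|X|\wedge n\mid\F_s]\le\alpha_{st}(Q)+\rho_{st}(-|X|)$. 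Letting $n\to\infty$ and invoking conditional monotone convergence produces $E_Q[|X|\mid\F_s]\le\alpha_{st}(Q)+\rho_{st}(-|X|)<\infty$ $P$-a.s.; recalling that $Q_{|\F_s}=P_{|\F_s}$ and taking $P$-expectations gives $E_Q[|X|]\le E(\alpha_{st}(Q))+E[\rho_{st}(-|X|)]<\infty$, i.e. $X\in L_1(Q)$.

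With $Q$-integrability secured, I would approximate $X$ by bounded variables through a double truncation, exploiting that for $p<\infty$ the risk measure is continuous both from below and from above. Set $V_n:=X\vee(-n)$, so that $V_n\downarrow X$, and $W_{n,m}:=V_n\wedge m\in L_\infty(\F_t)$, so that $W_{n,m}\uparrow V_n$ as $m\to\infty$. Writing $\beta:=\esssup_{Y\in L_\infty(\F_t)}(E_Q[-Y\mid\F_s]-\rho_{st}(Y))$, the membership $W_{n,m}\in L_\infty(\F_t)$ gives $\beta\ge E_Q[-W_{n,m}\mid\F_s]-\rho_{st}(W_{n,m})$. Letting $m\to\infty$, conditional dominated convergence (the $W_{n,m}$ being dominated by $|X|+n\in L_1(Q)$) gives $E_Q[-W_{n,m}\mid\F_s]\to E_Q[-V_n\mid\F_s]$, while continuity from below gives $\rho_{st}(W_{n,m})\downarrow\rho_{st}(V_n)$; hence $\beta\ge E_Q[-V_n\mid\F_s]-\rho_{st}(V_n)$. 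Letting then $n\to\infty$, dominated convergence (now $-V_n\uparrow -X$ controlled by $|X|+1\in L_1(Q)$) gives $E_Q[-V_n\mid\F_s]\to E_Q[-X\mid\F_s]$, and continuity from above gives $\rho_{st}(V_n)\uparrow\rho_{st}(X)$. Combining, $\beta\ge E_Q[-X\mid\F_s]-\rho_{st}(X)$, and taking the essential supremum over $X\in L_p(\F_t)$ completes the proof.
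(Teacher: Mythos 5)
Your proof is correct, and it shares the paper's overall strategy---reduce from $X \in L_p(\F_t)$ to bounded random variables by two-sided truncation, then pass to the limit using the automatic continuity from below and from above of $\rho_{st}$ when $p<\infty$---but the two key technical devices are genuinely different. The paper (i) invokes the acceptance-set formula $\alpha_{st}(Q)=\esssup_{X \in {\cal A}^{p}_{st}} E_Q(-X|{\cal F}_s)$ (citing Detlefsen--Scandolo) to handle the truncation from below, and (ii) justifies the dominated-convergence step in the truncation from above by asserting that $E(\alpha_{st}(Q))<\infty$ forces $\frac{dQ}{dP}\in L_q(\F_t)$, so that $Q$-integrability of elements of $L_p(\F_t)$ follows from H\"older's inequality. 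You replace both ingredients with self-contained arguments: the lower truncation is handled directly via continuity from above (no acceptance sets needed), and---this is the nicest point of your proposal---the $Q$-integrability of an arbitrary $X\in L_p(\F_t)$ is extracted from the hypothesis by testing the penalty \eqref{minpen} against the bounded variables $-(|X|\wedge n)$ and letting $n\to\infty$. This yields a weaker conclusion than the $L_q$-density claim ($X \in L_1(Q)$ rather than $dQ/dP \in L_q$) but it is all that is needed, and it makes your argument more elementary and self-contained, whereas the paper outsources two nontrivial facts to external results. One caveat, which applies equally to the paper's own proof: when you take $P$-expectations of $E_Q\big[\,|X|\,\big|\,\F_s\big]$ to conclude $E_Q[|X|]<\infty$, you use $Q_{|\F_s}=P_{|\F_s}$; this is not literally contained in the stated hypothesis $E(\alpha_{st}(Q))<\infty$, but it is part of the standing context of the representation \eqref{rm-representation0}, over which the lemma's $Q$ ranges, so the step is legitimate here.
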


\begin{proof} 
We only need to prove \eqref{peninfty} in the case where $p<\infty$.
It is  well known that 
$\alpha_{st}(Q) =
\esssup_{X \in {\cal A}^{p}_{st}} E_Q(-X|{\cal F}_s)$ where ${\cal A}^{p}_{st}$ is the acceptance set for $\rho_{s,t}$, i.e.,
${\cal A}^{p}_{st}: =\{X \in L_{p}({\cal F}_t)| \rho_{st}(X) \leq 0\}$. See \cite{DS2005}.
Let $X \in {\cal A}^{p}_{st}$. For all $n>0$,  let $X_n=\sup(X,-n)$. Every $X_n$ belongs to ${\cal A}^{p}_{st} 
\subseteq L_{p}(\F_t)$. 
The sequence $-X_n$ is increasing to $-X$, it follows from the monotone convergence theorem that $E_Q(-X_n|{\cal F}_s)$ 
is increasing to  $E_Q(-X|{\cal F}_s)$. 
Then the continuity from above of $\rho_{st}$ allows to restrict the evaluation of $\esssup_{X \in L_{p}({\cal F}_t)}[E_Q(-X|{\cal F}_s)- \rho_{st}(X)]$ to the elements $X$ bounded from below. 
So, let $X \in  L_{p}({\cal F}_t)$ be bounded from below by $C$. 
Then $X $ is the increasing limit of the sequence of bounded random variables $Y_n= \inf(X,n)$. 
Moreover, $|Y_n| \leq \sup (|C|,X)$.
 The hypothesis $E(\alpha_{st}(Q))< \infty$  implies that $Q \ll P$ with a density in $L_{ q}({\cal F}_t)$, where 
 $ q$ is the conjugate of ${p}$. It follows from the dominated convergence theorem for conditional 
 expectations that 
 $E_Q(-X|{\cal F}_s)=\lim_{n \to \infty} E_Q(-Y_n|{\cal F}_s)$. 
 On the other hand, $\rho_{st}$ is continuous from below, thus $\rho_{st}(X)=\lim_{n \to \infty} \rho_{st}(Y_n)$.
 This proves that 
$$
E_Q(-X|{\cal F}_s)- \rho_{st}(X) \leq \underset{X \in L_{\infty}({\cal F}_t)}{\esssup} [E_Q(-X|{\cal F}_s)- \rho_{st}(X)].
$$ 
The result follows easily. 
\end{proof}

As announced, fully-dynamic risk measures $(\rho_{st})_{s,t}$ allow the evaluation of the risks related to the time-interval $[s,t]$ to be measured by $\rho_{st}$, which is related to the very time-interval. Indeed a $\F_t$-measurable position $X$, with $t<T$, has risk evaluation at $s<t$ given by $\rho_{st}(X)$.
We recall that when using a dynamic risk measure $(\rho_s)_s$, the evaluation of $X$ would be done regarding $X$ as a $\F_T$-measurable position (this regardless how far away in the future is $T$ compared to $t$) and thus obtaining the evaluation $\rho_s(X)$.
Indeed, even if $\rho_s(X) = \rho_{sT}(X)$, it may be $\rho_{st}(X) \ne \rho_{sT}(X)$.

So, fully-dynamic risk measures allow for the possibility of modelling changes in the rules of the evaluation along with time. 
This is particularly reasonable whenever the time horizon $T$ is large and the state of the economy are more likely to change.

\begin{remark}
The idea of having an evaluation criteria evolving with time is also identified in \cite{MZ2}, where the concept of {\it dynamic performance} is detailed together with the one of {\it forward performance}, i.e. a dynamic performance with a specification at the initial time. These are discussed in the context of portfolio optimisation and indifference prices in markets with explicitly modelled price dynamics, e.g. the binomial model in \cite{MZ1} and an It\^o type dynamic in \cite{MZ2}. The dynamic performance is intended to extend the concept of {\it value process} generated (backwards) by a utility function typically by dynamic programming. The dynamic performance $(U_t(x))_t$ is in fact only capturing those essential properties that such value process posses: the adaptedness to a given information flow, the supermartingality of
$(U_t(X))_t$ for any attainable $X$, and the martingality of $(U_t(X^*))_t$, for an optimal and attainable $X^*$.
We remark that the dynamic performance $(U_t(x))_t$ is characterised by {\it one} time index only. Also, in general, for $s<t$ and an $\F_s$-measurable $X$, $U_s(X) \ne U_t(X)$. In this they differ from dynamic risk-measures as fully-dynamic risk measures do (i.e. there is no {\it \`a priori} restriction property \eqref{restriction}).
On the other side, a dynamic convex risk measures plus its minimal penalty $V_t(X) := \rho_t(X) + \alpha_t(Q)$, $t\in [0,T]$, is a $Q$-supermartingale, for $X$ fixed, and it is a $Q$-martingale if the maximum is achieved at $Q$ in the robust representation of $\rho_0(X)$. See \cite{AP}.
\end{remark}

\subsubsection{About time-consistency.}

Hereafter we give two concepts of time-consistency for fully-dynamic risk measures. 

\begin{defn}
\label{def-fdtc}
A fully-dynamic risk measure on $L_p$ is 
\begin{itemize}
 \item 
\emph{ strong time-consistent} if for all $r,s,t \in [0,T]: r \leq s \leq t$, we have
$$ 
\rho_{rt}(X) = \rho_{rs} (-\rho_{st} (X)), \quad   X \in L_{p}(\F_t),
$$
\item \emph{ time-consistent} if for all $r,s,t \in [0,T]: r \leq s \leq t$,  for all $X,Y \in L_{p}(\F_t)$, we have
$$ 
\rho_{st}(X) = \rho_{st} (Y) \Longrightarrow \rho_{rt}(X) = \rho_{rt} (Y).
$$
\end{itemize}
\end{defn}
The second concept, here called simply \emph{time-consistency}, is deriving directly from the same notion introduced for classical dynamic risk-measures $(\rho_s)_s$, indexed by one time parameter, see \cite{AP} and see e.g. \cite{bielecki} for a recent survey on various weaker concepts of time-consistency. 
\begin{remark}
\label{tc as order}
Time consistency is substantially an ``order preserving property''. Indeed, from monotonicity and translation invariance, we have that time consistency as defined above is equivalent to the order preserving property: {\it For all $r \leq s \leq t$, $\rho_{st}(X) \geq \rho_{st}(Y)$, implies $\rho_{rt}(X) \geq \rho_{rt}(Y)$.}
\end{remark}

\begin{remark}
\label{tc and normalisation}
It is easy to see that time-consistency is transferred to the normalised version of $(\rho_{st})_{s,t}$ and, more generally, to any translation of $(\rho_{st})_{s,t}$.
\end{remark}

The first concept, here called \emph{strong time-consistency} to distinguish it from the other one, was already introduced in \cite{BN01}, where a complete characterisation was also provided.
\begin{remark}
\label{charcoc}
Strong time-consistency is a ``composition rule''. It means that for all $r \leq s \leq t$, the risk at time $r$ associated to  any random variable $X$ ${\cal F}_t$-measurable can be computed either directly as $\rho_{r,t}(X)$ or making use of the instant of time $s$ as $\rho_{r,s}(-\rho_{s,t}(X))$, and both quantities are the same. 
We recall that the strong time-consistency is fully characterised in terms of cocycle condition of the the minimal penalties. See Theorem 1 in \cite{BN02}.
\end{remark}

\begin{remark}
\label{stc and normalisation}
We observe that the property of strong time-consistency is {\it not} transferred to the normalised version $(\check\rho_{st})_{s,t}$ of $(\rho_{st})_{s,t}$. \\
To see this, assume that the fully-dynamic risk measure $(\rho_{st})_{s \leq t}$ satisfies strong time-consistency. We know that this is equivalent to the cocycle condition. 
From the  definition of the minimal penalty \eqref{minpen} we have that the associated normalised fully-dynamic risk measure ($\check \rho_{s,t})$ satisfies also strong time-consistency if and only if, for all probability measure $Q$ such that 
$E_Q(\alpha_{rt}(Q))<\infty$, 
$$\rho_{rt}(0)=\rho_{rs}(0)+E_Q(\rho_{st}(0)|{\cal F}_r).$$
This condition is, in fact, very strong.\\
\end{remark}

The following result details the relationship between the two forms of time-consistency. One can compare this with the concepts of time-consistency for price operators as in Definition \ref{def-consist}.
\begin{proposition}
\label{Prop:relationtc}
Let $\rho_{st}:  L_p({\cal F}_t) \longrightarrow L_p({\cal F}_s)$,  $s,t \in [0,T]:s \leq t$. 
The following assertions are equivalent:
\begin{itemize}
 \item [i)] 
The fully-dynamic risk measure $(\rho_{st})_{s,t}$ is \emph{strong time-consistent}.
 \item [ii)] The fully-dynamic risk measure  $(\rho_{st})_{ s,t}$ is  \emph{time-consistent}  
 and 
 \begin{equation}
 \rho_{rt}(Y)=\rho_{rs}(Y-\rho_{st}(0)),\quad 0 \leq r \leq s \leq t, \;\;  Y \in L_p({\cal F}_s).
 \label{eqwstc}
 \end{equation}
      \end{itemize}
     \end{proposition}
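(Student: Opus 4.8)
The plan is to prove the equivalence by two short implications, both of which exploit $\F_s$-translation invariance together with the elementary but crucial identity that for any $\F_s$-measurable $Y\in L_p(\F_s)$ one has $\rho_{st}(Y)=\rho_{st}(0)-Y$ (apply translation invariance with $f=Y$ to $\rho_{st}(0+Y)$). Throughout I use that $s\leq t$ forces $L_p(\F_s)\subseteq L_p(\F_t)$, so all the expressions below are legitimate, and I use time-consistency in its equality formulation exactly as in Definition \ref{def-fdtc}.

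First I would prove $\text{i)}\Rightarrow\text{ii)}$. For time-consistency, suppose $\rho_{st}(X)=\rho_{st}(Y)$ with $X,Y\in L_p(\F_t)$. Then $-\rho_{st}(X)=-\rho_{st}(Y)$ are equal elements of $L_p(\F_s)$, so applying $\rho_{rs}$ gives $\rho_{rs}(-\rho_{st}(X))=\rho_{rs}(-\rho_{st}(Y))$; by strong time-consistency the two sides are $\rho_{rt}(X)$ and $\rho_{rt}(Y)$, whence $\rho_{rt}(X)=\rho_{rt}(Y)$. For \eqref{eqwstc}, I take $Y\in L_p(\F_s)$, regard it as an element of $L_p(\F_t)$, and apply strong time-consistency to it: $\rho_{rt}(Y)=\rho_{rs}(-\rho_{st}(Y))$. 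By the boxed identity $\rho_{st}(Y)=\rho_{st}(0)-Y$, so $-\rho_{st}(Y)=Y-\rho_{st}(0)$, and substituting yields $\rho_{rt}(Y)=\rho_{rs}(Y-\rho_{st}(0))$, which is exactly \eqref{eqwstc}.

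Next I would prove $\text{ii)}\Rightarrow\text{i)}$, which is the more interesting direction and where the one genuine idea lies. Fix $X\in L_p(\F_t)$ and introduce the $\F_s$-measurable random variable $Y:=\rho_{st}(0)-\rho_{st}(X)\in L_p(\F_s)$. Using translation invariance, $\rho_{st}(Y)=\rho_{st}(0)-Y=\rho_{st}(0)-\big(\rho_{st}(0)-\rho_{st}(X)\big)=\rho_{st}(X)$, so $Y$ and $X$ carry the same risk at the pair $(s,t)$. Time-consistency then transfers this equality to the pair $(r,t)$, giving $\rho_{rt}(X)=\rho_{rt}(Y)$. Finally, because $Y\in L_p(\F_s)$, I may apply \eqref{eqwstc}: $\rho_{rt}(Y)=\rho_{rs}(Y-\rho_{st}(0))=\rho_{rs}(-\rho_{st}(X))$, where the last equality is just $Y-\rho_{st}(0)=-\rho_{st}(X)$. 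Chaining the two displays gives $\rho_{rt}(X)=\rho_{rs}(-\rho_{st}(X))$, i.e. strong time-consistency.

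The main obstacle is really the reverse direction, and specifically the construction of the correct $\F_s$-measurable surrogate $Y=\rho_{st}(0)-\rho_{st}(X)$: time-consistency only lets one propagate \emph{equalities} of $\rho_{st}$-values to equalities of $\rho_{rt}$-values, so the whole argument depends on manufacturing an $\F_s$-measurable $Y$ with $\rho_{st}(Y)=\rho_{st}(X)$, after which \eqref{eqwstc} does the bookkeeping. I expect the only points needing care to be routine: verifying that each $Y$ genuinely lies in $L_p(\F_s)$ (hence in $L_p(\F_t)$), and keeping the sign conventions of translation invariance consistent, since the penalty term $\rho_{st}(0)$ is precisely what \eqref{eqwstc} is designed to absorb. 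Everything else is a direct application of monotonicity-free algebra together with the hypotheses, so no representation result or duality is needed here.
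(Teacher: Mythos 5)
Your proof is correct and follows essentially the same route as the paper: the forward direction is the same direct use of translation invariance and strong time-consistency, and the reverse direction hinges on exactly the paper's construction of the $\F_s$-measurable surrogate $Y=\rho_{st}(0)-\rho_{st}(X)$ satisfying $\rho_{st}(Y)=\rho_{st}(X)$, followed by time-consistency and an application of \eqref{eqwstc}. The only difference is that you spell out the details of i) $\Rightarrow$ ii), which the paper leaves as a one-line remark.
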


 \begin{proof}
To see that (i) implies (ii), it is enough to use the translation invariance property of $\rho_{st}$.

Conversely, assume that (ii) is satisfied. Let $Z \in L_p({\cal F}_t)$ and define $Y:=\rho_{st}(0)-\rho_{st}(Z)$. 
From the translation invariance property of $\rho_{st}$ follows that 
$\rho_{st}(Y)=\rho_{st}(0) - [\rho_{st}(0) - \rho_{st}(Z)]= \rho_{st}(Z)$. Thus, from time-consistency, $\rho_{rt}(Y)=\rho_{rt}(Z)$. Applying (\ref{eqwstc}) to $Y$, we get that $ \rho_{rt}(Z) 
= \rho_{rs} (-\rho_{st}(Z))$. 
\end{proof}
     
In the case when the risk measures $(\rho_{st})_{s,t}$ are normalised, the above result has an easy interpretation.
\begin{corollary}
\label{Corollary2.5}
 Assume that the fully-dynamic risk measure $(\rho_{st})_{s,t}$   is normalised i.e. $\rho_{st}(0)=0$, for all $s \leq t$. 
 Then the  fully-dynamic risk measure $(\rho_{st})_{s,t}$  is strong time-consistent if and only if it is time-consistent and satisfies the \emph{restriction property}, i.e. for all $0 \leq r \leq s \leq t$,
 \begin{equation}
 \label{restriction}
 \rho_{rt}(Y)=\rho_{rs}(Y), \qquad Y \in L_p({\cal F}_s).
 \end{equation}
 \end{corollary}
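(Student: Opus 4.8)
The plan is to derive Corollary \ref{Corollary2.5} directly from Proposition \ref{Prop:relationtc} by specialising to the normalised case $\rho_{st}(0)=0$. The corollary is really just Proposition \ref{Prop:relationtc} read under the extra hypothesis, so no new hard work should be needed; the task reduces to checking that the auxiliary condition \eqref{eqwstc} collapses exactly to the restriction property \eqref{restriction} when $\rho_{st}(0)=0$.

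First I would invoke Proposition \ref{Prop:relationtc}: strong time-consistency of $(\rho_{st})_{s,t}$ is equivalent to the conjunction of time-consistency together with the identity
\begin{equation*}
\rho_{rt}(Y)=\rho_{rs}\big(Y-\rho_{st}(0)\big),\qquad 0\leq r\leq s\leq t,\;\; Y\in L_p(\F_s).
\end{equation*}
Next I would substitute the normalisation hypothesis $\rho_{st}(0)=0$ into this identity. The argument $Y-\rho_{st}(0)$ then becomes simply $Y$, so the displayed equation turns into $\rho_{rt}(Y)=\rho_{rs}(Y)$ for all $Y\in L_p(\F_s)$, which is precisely the restriction property \eqref{restriction}. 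Conversely, if the restriction property holds together with normalisation, then $\rho_{rs}(Y)=\rho_{rs}(Y-\rho_{st}(0))$ trivially since $\rho_{st}(0)=0$, so \eqref{eqwstc} is recovered. Hence, under normalisation, condition \eqref{eqwstc} and the restriction property \eqref{restriction} are equivalent, and the corollary follows immediately from the proposition.

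There is essentially no obstacle here, which is why the statement is phrased as a corollary: the only point requiring any care is the bookkeeping of which time indices appear. In particular one should note that \eqref{eqwstc} a priori involves $\rho_{st}(0)$ for the specific triple $r\leq s\leq t$, and that normalisation is assumed for \emph{all} pairs $s\leq t$, so the substitution is legitimate uniformly in $r,s,t$. I would therefore keep the proof to two or three lines, simply citing Proposition \ref{Prop:relationtc} and observing the equivalence of \eqref{eqwstc} and \eqref{restriction} under $\rho_{st}(0)=0$. If one wished to make the write-up fully self-contained one could instead repeat the short translation-invariance argument from the proof of the proposition, but reusing the proposition is cleaner and matches the way the excerpt is organised.
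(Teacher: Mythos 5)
Your proposal is correct and coincides with the paper's own (implicit) argument: the corollary is stated as an immediate consequence of Proposition \ref{Prop:relationtc}, and under the normalisation $\rho_{st}(0)=0$ the condition \eqref{eqwstc} reduces verbatim to the restriction property \eqref{restriction}, exactly as you observe. Nothing further is needed.
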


\bigskip
To summarise, in this work we consider a  strong time-consistent fully-dynamic risk measure $(\rho_{st})_{s,t}$ not necessarily normalised. Also we do not assume the restriction property \eqref{restriction}.

\subsubsection{About domination.}

Hereafter we consider the concept of domination for a risk measure, first introduced in \cite{BNK}. This is going to be crucial for defining the correct framework to study risk-indifference 
price operators.
\begin{defn}
\label{def-domination} 
Fix $p < \infty$. Let $\rho: L_p(\F_T) \longrightarrow \mathbb{R}$ be a convex risk measure continuous from below.
The risk measure $\rho$ is \emph{dominated} if there exists a sublinear (or coherent) risk measure $\tilde \rho: L_{p}(\F_T) \rightarrow \R$ 
such that 
$$
\rho(X)-\rho(0) \leq \tilde \rho(X), \qquad  X \in L_{p}(\F_T).
$$
\end{defn}

In \cite{BNK} the property of domination is characterised and it is proved that it guarantees a representation result.
The following proposition summarises these findings.

\begin{proposition}
\label{propRepBNK}
Fix $p\in [1,\infty)$. Let $\rho: L_{p}(\F_T) \longrightarrow \R$ be a convex risk measure. The following statements are equivalent:
\begin{enumerate}
 \item 
The risk measure $\rho$ is dominated.
\item 
There exist $K>0$ and $C\in \mathbb{R}$ such that $\rho(X) \leq K \Vert X \Vert_p +C$, for all $X \in L_p(\F_T)$.
\item 
The risk measure $\rho$ admits representation
\begin{equation}
 \rho(X)=\max_{Q \in {\cal B}^{K}} (E_Q(-X)-\alpha(Q)), 
 \qquad X \in L_p(\F_T),
 \label{eqnrepBNK}
\end{equation}
where ${\cal B}^K$ is the set of probability measures on ${\cal F}_T$:
$$
{\cal B}^K:=\Big\{Q \ll P:  \frac{dQ}{dP} \in L_q({\cal F}_T), \;||\frac{dQ}{dP}||_q \leq K\Big\},\;\;q=p(p-1)^{-1},
$$
and $\alpha$ is the minimal penalty for $\rho$.
\end{enumerate}
\end{proposition}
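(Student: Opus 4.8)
The plan is to establish the three statements as equivalent through the cyclic chain $(1)\Rightarrow(2)\Rightarrow(3)\Rightarrow(1)$, exploiting throughout the duality between a norm growth bound on $\rho$ and an $L_q$-bound on the densities of the penalising measures. I would write $Z=\frac{dQ}{dP}$, use the conjugate exponent $q=p(p-1)^{-1}$, and invoke the fact (to be cited from \cite{FiSv}, as already used for \eqref{eqmax0}) that a finite convex risk measure on $L_p(\F_T)$ with $p<\infty$ is continuous from below and from above and admits the \emph{attained} dual representation $\rho(X)=\max_{Q\ll P}\big(E_Q(-X)-\alpha(Q)\big)$ with $\alpha$ the minimal penalty, where $\alpha(Q)<\infty$ forces $Z\in L_q(\F_T)$.

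For $(1)\Rightarrow(2)$: if $\rho$ is dominated by a coherent risk measure $\tilde\rho$, I would use that a real-valued sublinear risk measure on $L_p$ is norm-continuous, hence, by continuity at $0$ together with positive homogeneity, globally bounded: there is $K>0$ with $\tilde\rho(X)\leq K\Vert X\Vert_p$ for all $X$. Domination then gives $\rho(X)\leq \rho(0)+\tilde\rho(X)\leq K\Vert X\Vert_p+C$ with $C=\rho(0)$, which is $(2)$.

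The core of the argument is $(2)\Rightarrow(3)$. Starting from the attained representation above, I would show that the growth bound confines the effective domain of $\alpha$ to $\mathcal{B}^K$. For any $Q$, the minimal penalty satisfies
\[
\alpha(Q)=\sup_{X\in L_p(\F_t)}\big(E_Q(-X)-\rho(X)\big)\;\geq\;\sup_{X}\big(E_Q(-X)-K\Vert X\Vert_p\big)-C .
\]
Setting $Y=-X$ and writing $E_Q(-X)=E[YZ]$, the key computation is that, by Hölder's inequality and its sharpness,
\[
\sup_{Y\in L_p(\F_T)}\big(E[YZ]-K\Vert Y\Vert_p\big)=
\begin{cases} 0, & \Vert Z\Vert_q\leq K,\\ +\infty, & \Vert Z\Vert_q> K.\end{cases}
\]
Hence $\alpha(Q)<\infty$ implies $\Vert Z\Vert_q\leq K$, i.e. $Q\in\mathcal{B}^K$. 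Since the maximiser $Q_X$ furnished by the representation has $\alpha(Q_X)<\infty$, it lies in $\mathcal{B}^K$, so the max is attained within $\mathcal{B}^K$ and $\rho(X)=\max_{Q\in\mathcal{B}^K}\big(E_Q(-X)-\alpha(Q)\big)$, which is $(3)$.

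For $(3)\Rightarrow(1)$: I would define $\tilde\rho(X):=\sup_{Q\in\mathcal{B}^K}E_Q(-X)$ and check it is coherent (monotone, positively homogeneous, subadditive, translation invariant as a supremum of the corresponding linear functionals) and finite, since $\tilde\rho(X)\leq \Vert X\Vert_p\sup_{Q\in\mathcal{B}^K}\Vert Z\Vert_q\leq K\Vert X\Vert_p$. Domination then follows by letting $Q^{*}\in\mathcal{B}^K$ attain the max in $(3)$ and using $\rho(0)\geq -\alpha(Q^{*})$, which gives $\rho(X)-\rho(0)\leq E_{Q^{*}}(-X)\leq\tilde\rho(X)$, establishing $(1)$.

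I expect the main obstacle to be the $(2)\Rightarrow(3)$ step, and within it specifically the sharp duality computation above: one must verify that the supremum is genuinely $+\infty$ as soon as $\Vert Z\Vert_q>K$, which requires producing (or approximating) the extremal $Y$ realising equality in Hölder's inequality for the pair $(p,q)$ with $p<\infty$, and simultaneously confirm that the general max-representation with attainment is legitimately available in the $L_p$ setting before restricting its support to $\mathcal{B}^K$.
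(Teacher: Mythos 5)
Your proof is correct, but it takes a genuinely different route from the paper. The paper's own proof is essentially a citation-and-adaptation argument: it invokes Proposition 3.1 of \cite{BNK} (proved there for the capacity space $L^1(c)$ over a Polish space), notes that the attained max-representation $\rho(X)=\max_{Q\in L_p(\F)^*}\big(E_Q(-X)-\alpha(Q)\big)$ follows from the extended Namioka--Klee theorem (\cite{BF2009}, Theorem 1), and then asserts that the BNK proof goes through verbatim upon replacing $L^1(c)$ by $L_p$ and $c(f)$ by $\Vert f\Vert_p$; none of the equivalences is actually written out. You instead give a self-contained cyclic proof $(1)\Rightarrow(2)\Rightarrow(3)\Rightarrow(1)$, whose technical heart is the sharp H\"older duality computation showing that $\sup_Y\big(E[YZ]-K\Vert Y\Vert_p\big)$ is $0$ or $+\infty$ according to whether $\Vert Z\Vert_q\leq K$ or not, so that the finiteness of the minimal penalty confines the effective domain to $\mathcal{B}^K$; the steps $(1)\Rightarrow(2)$ (automatic norm-continuity of a finite sublinear risk measure, again via \cite{BF2009}, plus positive homogeneity) and $(3)\Rightarrow(1)$ (take $\tilde\rho(X)=\sup_{Q\in\mathcal{B}^K}E_Q(-X)$ and use $\alpha(Q_X)\geq-\rho(0)$) are also sound. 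The one delicate point you correctly flag — that the supremum is genuinely $+\infty$ when $\Vert Z\Vert_q>K$ — only requires approximating, not attaining, the H\"older-extremal element, so it causes no trouble even for $p=1$, $q=\infty$. What each approach buys: yours is transparent and verifiable without consulting \cite{BNK}, and makes explicit exactly where finiteness of the penalty interacts with the growth bound; the paper's is much shorter and deliberately ties the result to the capacity framework of \cite{BNK}, which reappears later when the spaces $L^c_t$ are constructed. Both rely on the same external input (the extended Namioka--Klee theorem / \cite{FiSv}) for the attained dual representation, so neither is more elementary at that point.
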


\begin{proof}
When $\Omega$ is a Polish space, the results of \cite{BNK} apply directly to 
$L_p(\Omega,{\cal B}(\Omega), P)$ (with $c(f)=E(|f|^p)^{1/p}$).\\
However, Proposition 3.1 of \cite{BNK} is also valid for $L_p(\Omega,{\cal F},P)$ without any assumption on 
$(\Omega,{\cal F},P)$. Indeed the representation 
$$\rho(X)=\underset{Q \in L_p({\cal F})^*}\max \big(E_Q(-X)-\alpha(Q)\big)$$
follows from the extended Nomiaka Klee Theorem (\cite{BF2009} Theorem 1). Replacing everywhere in the proof of \cite{BNK} Proposition 3.1 $L^1(c)$ by $L_p(\Omega)$ and $c(f)$ by $||f||_p$, we obtain the result for general $L_p({\cal F}_T)$.
\end{proof}

The above representation is known in the case of sublinear (or coherent) risk measures, see \cite{FiSv} and \cite{KR}.
Also a representation result for finite convex risk measures on $L_p$ ($p<\infty$) is  given in \cite{KR} Theorem 2.11. This proof is unfortunately based on a wrong statement (see Proposition 2.10 in \cite{KR}).

\bigskip
Note that, if a convex risk measure $\rho$ is dominated, then we have the sandwich:
$$
-\tilde \rho(-X) \leq \rho(X)-\rho (0) \leq \tilde \rho(X), \qquad X\in L_p(\F_T),
$$ 
where $-\tilde \rho (-X)$ is superlinear and $\tilde \rho$ is sublinear.

The sandwich relationship above provides a motivation itself for the use of the property of domination. This is in view of the link with the extension theorem of strong time-consistent convex operators satisfying a sandwich condition as studied in \cite{BNDN2}. 
Indeed if we consider a strong time-consistent family of operators $(\rho_{st})_{s,t}$ 
defined on the vector subspaces $(L_t)_t$ with  $L_t \subseteq L_{p}(\F_t)$ and 
if each $\rho_{st}$ satisfies a sandwich condition and the Fatou property, 
then we can extend this family $(\rho_{st})_{s,t}$ to the whole $(L_{p}(\F_t))_t$. 

\bigskip
\subsubsection{About sensitivity.}

Now we consider the sensitivity of a risk measure. This concept 
yields a representation of the risk measure in terms of probability measures equivalent to $P$. See e.g. Section 3 in \cite{KS}. 
\begin{defn}
Let $\rho: L_p(\F_T) \longrightarrow \mathbb{R}$ be a convex risk measure.
\begin{itemize}
\item
The risk measure $\rho$ is \emph{strong sensitive or relevant (to $P$)}, if
$$ 
\rho(1_B) < \rho(0), 
$$
for all $B \in \F_T$ such that $P(B)>0$.
\item
The risk measure $\rho$ is \emph{ sensitive (to $P$)} if there exists a probability measure $\tilde Q \sim P$ such that $\alpha(\tilde Q) < \infty$, where $\alpha$ is the minimal penalty associated to $\rho$. 
\end{itemize}
\end{defn}

\begin{remark}
The property of relevance implies  sensitivity. This follows from Lemma 3.4 in \cite{KS} applied to $\Phi(X)= - \rho(X)$, restricted to $L_{\infty}(\F_T)$.
\end{remark}

\begin{proposition}
\label{prop1}
Fix $p<\infty$. Let $\rho: L_p(\F_T) \longrightarrow \mathbb{R}$ be a sensitive convex risk measure, then the risk measure $\rho$ admits representation
\begin{equation}
\rho(X) = \sup_{Q \in \mathcal{Q}} \Big( E_Q(X) - \alpha(Q) \Big), \quad X \in L_p(\F_T),
\label{rep0T}
\end{equation}
where
$$
\mathcal{Q} := \Big\{ Q \sim P\,: \quad \alpha(Q) < \infty \Big\}.
$$
Moreover, if $\rho$ is dominated, then 
$$
\mathcal{Q} \subseteq {\cal B}^K
$$
where $K$ is the constant in the domination property and $q=p(p-1)^{-1}$.
\end{proposition}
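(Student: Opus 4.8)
The plan is to obtain the equivalent-measure representation \eqref{rep0T} from the absolutely-continuous robust representation that any finite convex risk measure on $L_p$ ($p<\infty$) enjoys, and then to upgrade the representing set from $Q\ll P$ to $Q\sim P$ using sensitivity. First I would recall that, being real-valued, $\rho$ is norm-continuous on $L_p(\F_T)$ by the extended Namioka--Klee theorem (\cite{BF2009}, exactly as invoked in the proof of Proposition \ref{propRepBNK}), so Fenchel--Moreau duality yields
$$\rho(X)=\max_{Q\ll P}(E_Q(-X)-\alpha(Q)),\qquad X\in L_p(\F_T),$$
where the maximum is effectively over those $Q\ll P$ with $\alpha(Q)<\infty$ (and density in $L_q$). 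Since $\mathcal{Q}\subseteq\{Q\ll P:\alpha(Q)<\infty\}$ and, by the very definition \eqref{minpen} of the minimal penalty, $\rho(X)\geq E_Q(-X)-\alpha(Q)$ for every $Q$, the inequality $\rho(X)\geq\sup_{Q\in\mathcal{Q}}(E_Q(-X)-\alpha(Q))$ is immediate; the real work is the reverse inequality.

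For the reverse inequality I would use sensitivity to ``push'' absolutely continuous measures onto equivalent ones. Fix a reference $\tilde Q\sim P$ with $\alpha(\tilde Q)<\infty$, whose existence is precisely the sensitivity assumption. Given any $Q\ll P$ with $\alpha(Q)<\infty$, set $Q_\lambda:=\lambda\tilde Q+(1-\lambda)Q$ for $\lambda\in(0,1]$. Because $\tfrac{d\tilde Q}{dP}>0$ $P$-a.s., the density $\tfrac{dQ_\lambda}{dP}=\lambda\tfrac{d\tilde Q}{dP}+(1-\lambda)\tfrac{dQ}{dP}$ is strictly positive, so $Q_\lambda\sim P$. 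As $\alpha$ is a supremum of maps affine in the density it is convex, hence $\alpha(Q_\lambda)\leq\lambda\alpha(\tilde Q)+(1-\lambda)\alpha(Q)<\infty$ and thus $Q_\lambda\in\mathcal{Q}$. Since $E_{Q_\lambda}(-X)=\lambda E_{\tilde Q}(-X)+(1-\lambda)E_Q(-X)$, letting $\lambda\to0^+$ gives
$$\sup_{Q'\in\mathcal{Q}}(E_{Q'}(-X)-\alpha(Q'))\geq\liminf_{\lambda\to0^+}(E_{Q_\lambda}(-X)-\lambda\alpha(\tilde Q)-(1-\lambda)\alpha(Q))=E_Q(-X)-\alpha(Q).$$
Taking the supremum over all admissible $Q\ll P$ recovers $\rho(X)$ on the right, so $\sup_{Q\in\mathcal{Q}}(E_Q(-X)-\alpha(Q))\geq\rho(X)$ and \eqref{rep0T} follows.

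For the final inclusion $\mathcal{Q}\subseteq\mathcal{B}^K$ under domination, I would start from the growth bound $\rho(X)\leq K\Vert X\Vert_p+C$ supplied by Proposition \ref{propRepBNK}(2). For $Q\in\mathcal{Q}$, finiteness of $\alpha(Q)$ gives $E_Q(-X)\leq\rho(X)+\alpha(Q)\leq K\Vert X\Vert_p+C+\alpha(Q)$ for all $X\in L_p(\F_T)$. Replacing $X$ by $tX$, dividing by $t>0$, and letting $t\to\infty$ removes the constants and leaves $E_Q(-X)\leq K\Vert X\Vert_p$; applying this to $-X$ as well gives $\vert E_Q(X)\vert\leq K\Vert X\Vert_p$. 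Hence $X\mapsto E_P(X\,\tfrac{dQ}{dP})$ is a continuous linear functional on $L_p(\F_T)$ of norm at most $K$, so by $L_p$--$L_q$ duality $\tfrac{dQ}{dP}\in L_q(\F_T)$ with $\Vert\tfrac{dQ}{dP}\Vert_q\leq K$, i.e. $Q\in\mathcal{B}^K$.

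I expect the main obstacle to be the limiting argument in the second paragraph: continuity of $\alpha$ is \emph{not} available (only lower semicontinuity), so it is essential to bound $\alpha(Q_\lambda)$ from above by the convex combination $\lambda\alpha(\tilde Q)+(1-\lambda)\alpha(Q)$ and let \emph{that} bound, rather than $\alpha(Q_\lambda)$ itself, pass to the limit. Likewise the equivalence $Q_\lambda\sim P$ hinges on strict positivity of $\tfrac{d\tilde Q}{dP}$, which is exactly what sensitivity guarantees.
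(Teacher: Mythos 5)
Your proposal is correct, and it diverges from the paper only in the first half. For the inclusion $\mathcal{Q} \subseteq {\cal B}^K$ your argument is literally the paper's: the growth bound $\rho(X) \leq K\Vert X \Vert_p + C$ from Proposition \ref{propRepBNK}, the bound $E_Q(-X) \leq \alpha(Q) + \rho(X)$, rescaling $X \mapsto \lambda X$, dividing by $\lambda$ and letting $\lambda \to \infty$, then $L_p$--$L_q$ duality. For the representation \eqref{rep0T}, however, the paper simply writes that it ``is a direct consequence of Theorem 3.1 in \cite{KS}'' and gives no argument, whereas you prove it from scratch: you start from the absolutely continuous max-representation (available for finite convex risk measures on $L_p$, $p<\infty$, via \cite{BF2009}/\cite{FiSv}, exactly as the paper itself invokes elsewhere), and then upgrade to equivalent measures by mixing an arbitrary $Q \ll P$ with finite penalty with the equivalent measure $\tilde Q$ supplied by sensitivity, using convexity of the minimal penalty (a supremum of affine maps) to keep $\alpha(Q_\lambda)$ under control and letting $\lambda \to 0^+$. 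This is in substance the proof of the implication of the cited Kl\"oppel--Schweizer result that the paper uses as a black box, so what your route buys is self-containedness and a transparent display of precisely where sensitivity enters; what the paper's route buys is brevity. Two small points worth making explicit: (i) the limit $\lambda \to 0^+$ needs $E_{\tilde Q}(-X)$ finite, which holds because any measure with finite minimal penalty integrates all of $L_p$ (approximate $X^{\pm}$ by $X^{\pm}\wedge n$ and use monotonicity of $\rho$ together with $\alpha(\tilde Q)<\infty$); and (ii) the statement of the proposition writes $E_Q(X)$, but given the definition \eqref{minpen} of the minimal penalty the consistent reading is $E_Q(-X)$, which is the convention you adopt and the one used in the paper's other representations such as \eqref{rm-representation0} and \eqref{eqnrepBNK}.
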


\begin{proof}
The representation is a direct consequence of Theorem 3.1 in \cite{KS}.
Let $Q \in \mathcal{Q}$, then the  domination implies that $E_Q(X) \leq \alpha(Q) + K \Vert X \Vert_p + C$.
Applying this to $\lambda X$ for all $\lambda >0$ we have
$$
E_Q[X] \leq \frac{\alpha(Q) + C}{\lambda} + K \Vert X \Vert_p.
$$
By taking $\lambda \to \infty$ we conclude.
\end{proof}

\subsubsection{Example: Strong time-consistent fully-dynamic risk measures from BMO martingales}

  Examples of strong time-consistent fully-dynamic risk measures on $L_{\infty}$ were constructed in \cite{BN01} making use of  BMO martingales, cf. Proposition 4.13 and Proposition 4.19. Here we present the $L_p$, $p<\infty$, case. 
  Recall the following definition of BMO martingales.
  \begin{definition}
   \label{BMO}
   A right continuous uniformly integrable martingale $M$ is BMO if there is a constant $C$ such that for any stopping time $S$, 
   $$E([M,M]_{\infty}-[M,M]_{S^-}|{\cal F}_S) \leq C^2.$$
   The smallest $C$ is, by definition, the BMO norm of $M$: $||M||_{BMO}$.
  \end{definition}
For the study of right-continuous BMO martingales, we refer to \cite{DDM} and for continuous BMO martingales we refer to \cite{Ka}. In the case of continuous martingales, the above norm $||M||_{BMO}$ is 
often written $||M||_{BMO_2(P)}$.\\
We prove now that the construction detailed in \cite{BN01} gives also rise to strong time-consistent fully-dynamic risk measures on $L_p$ spaces when the set of martingales used for the construction of 
the dynamic risk measure is a  set of BMO martingales uniformly bounded. \\
Note that for given $K>0$, the set 
$${\cal M}_K=\{M:\; BMO \; \text{martingale}, \; ||M||_{BMO} \leq K\}$$ is a stable set.\\ 
We first prove the following lemma for conditional expectations with respect to the probability measures $Q_M \ll P$ with Radon Nykodym derivative
$$
\frac{dQ_M}{dP} = {\cal E}(M)_T \quad\textrm{ on } {\cal F}_T
$$
associated to some continuous BMO martingale $M$.
 
\begin{lemma}
\label{lemmaBMO}
 Let $K>0$. There is $p(K)>0$, depending only on $K$, such that for all continuous BMO martingale $M$ with 
 $||M||_{BMO} \leq K$, 
the conditional expectation $E_{Q_M}(.|{\cal F}_s)$ defines a continuous operator:
$$
E_{Q_M}(.|{\cal F}_s): L_p(\F_T) \longrightarrow L_p(\F_s),
\qquad p>p(K).
$$
Furthermore the norm of this operator is bounded by some constant
 depending only on $K$ and $p$.  
\end{lemma}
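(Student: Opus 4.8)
The plan is to reduce the statement to a single analytic input — a reverse Hölder inequality for the density $Z_T := {\cal E}(M)_T$ that is \emph{uniform} over the BMO-ball — and then to derive the operator bound by the conditional Hölder and Jensen inequalities. First I would recall Kazamaki's theory of continuous BMO martingales (see \cite{Ka}): for such an $M$ the stochastic exponential ${\cal E}(M)$ is a strictly positive uniformly integrable martingale, so $Q_M$ is a genuine probability measure, and there is a decreasing threshold function $\Phi:(1,\infty)\to(0,\infty)$ with $\Phi(1{+})=\infty$ and $\Phi(\infty)=0$ such that $\|M\|_{BMO}<\Phi(r)$ forces the density $Z$ to satisfy the reverse Hölder inequality $R_r$, namely
\[
E\big[(Z_T/Z_S)^r \mid {\cal F}_S\big]\le C,\qquad \text{for every stopping time }S,
\]
with $C$ depending only on $r$ and $\|M\|_{BMO}$. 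Given $K$, I would fix once and for all an exponent $r=r(K)>1$ with $\Phi(r)>K$; then the inequality above holds simultaneously for every $M$ with $\|M\|_{BMO}\le K$, with one constant $C=C(K)$. I then set $p(K):=r(K)/(r(K)-1)$, the conjugate exponent of $r(K)$.

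Next, for $p>p(K)$ (equivalently $p\ge r(K)'$) and $X\in L_p({\cal F}_T)$, I would use that $Z$ is a martingale and $Z_s$ is strictly positive and ${\cal F}_s$-measurable to write the Bayes formula in the form
\[
\big| E_{Q_M}(X\mid{\cal F}_s)\big|=\Big| E\big(X\,Z_T/Z_s \mid {\cal F}_s\big)\Big|\le E\big(|X|\,Z_T/Z_s \mid {\cal F}_s\big).
\]
Applying the conditional Hölder inequality with the conjugate pair $(r',r)$, and then the reverse Hölder inequality at the deterministic (hence stopping) time $S=s$, yields
\[
\big| E_{Q_M}(X\mid{\cal F}_s)\big|\le \big[E(|X|^{r'}\mid{\cal F}_s)\big]^{1/r'}\big[E((Z_T/Z_s)^{r}\mid{\cal F}_s)\big]^{1/r}\le C^{1/r}\big[E(|X|^{r'}\mid{\cal F}_s)\big]^{1/r'}.
\]

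To finish I would take $L_p(P)$-norms. Since $p/r'\ge 1$, the map $u\mapsto u^{p/r'}$ is convex, so conditional Jensen gives $\big(E(|X|^{r'}\mid{\cal F}_s)\big)^{p/r'}\le E(|X|^{p}\mid{\cal F}_s)$, and the tower property then yields $\big\|[E(|X|^{r'}\mid{\cal F}_s)]^{1/r'}\big\|_p\le\|X\|_p$. Combining with the previous display gives
\[
\|E_{Q_M}(X\mid{\cal F}_s)\|_p\le C^{1/r}\,\|X\|_p,
\]
so $E_{Q_M}(\cdot\mid{\cal F}_s)$ is a continuous operator from $L_p({\cal F}_T)$ to $L_p({\cal F}_s)$ with operator norm at most $C(K)^{1/r(K)}$, a constant depending only on $K$ (and $p$), as claimed.

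The hard part is entirely in the first step: securing the reverse Hölder inequality \emph{uniformly} over the whole set $\{\|M\|_{BMO}\le K\}$, with an exponent $r$ and a constant $C$ that depend only on $K$ and not on the individual martingale $M$. This uniformity is precisely what Kazamaki's explicit threshold $\Phi$ delivers, and it is what makes $p(K)$ and the operator-norm bound independent of $M$; once it is in hand, the conditional Hölder and Jensen estimates are routine.
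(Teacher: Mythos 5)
Your proof is correct and follows essentially the same route as the paper: the key input in both is Kazamaki's reverse H\"older inequality (Theorem 3.1 of \cite{Ka}), applied uniformly over the BMO ball $\{\|M\|_{BMO}\le K\}$, followed by the conditional H\"older inequality and the tower property. The only (harmless) variation is in the bookkeeping of exponents: the paper applies H\"older with the conjugate pair $\big(q,p\big)$, $q=p/(p-1)<q(K)$, so its constant depends on $p$, whereas you fix a single reverse-H\"older exponent $r(K)$ and insert a conditional Jensen step to pass from $r(K)'$ to arbitrary $p>p(K)$, which incidentally yields an operator-norm bound independent of $p$.
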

\begin{proof}
 From Theorem 3.1 of \cite{Ka}, there exists  $q(K) \in (1,\infty)$, such that for $q< q(K)$, for all $M$ with $||M||_{BMO} \leq K$, the stochastic exponential ${\cal E}(M)$ satisfies the reverse H\"older inequality:
 \begin{equation}
  \label{eqRH}
  E({\cal E}(M) ^q_t|{\cal F}_s) \leq C_q {\cal E}(M) ^q_s
 \end{equation}
for some constant $C_q$ depending only on $K$ and $q$. \\
Let $p>p(K)= \frac{q(K)}{q(K)-1}$.  It follows from H\"older inequality for conditional expectations  and (\ref{eqRH}) that, for all $X \in L_p({\cal F}_T)$,
\begin{align*}
 |E_{Q_M}(X|{\cal F}_s)| 
 \leq & 
 \big[\big( E(\frac{{\cal E}(M)_T}{{\cal E}(M)_s}\big)^q|
 {\cal F}_s \big)\big]^{\frac{1}{q}} E(|X|^p|{\cal F}_s)^{\frac{1}{p}} \nonumber\\
  \leq & C_q^{\frac{1}{q}} E(|X|^p|{\cal F}_s)^{\frac{1}{p}}.\nonumber
\end{align*}
From this we have 
$$E(|E_{Q_M}(X|{\cal F}_s)|^p)^{\frac{1}{p}} \leq C_q^{\frac{1}{q}}||X||_p.$$
\end{proof}

This allows for the following construction of fully-dynamic risk measures on $L_p$.

\begin{proposition}\label{BMOcont}
 Let ${\cal M}$ be a stable set of BMO continuous martingales with BMO norm bounded by $K$. Let $(b_u)$ be a bounded predictable process. For all $0 \leq s \leq t \leq T$, let
 \begin{equation}
  \label{eqdefFD}
  \rho_{st}(X)=
  \underset{M \in {\cal M}}{\esssup}\; \big(E_{Q_M}(-X|{\cal F}_s)-\alpha_{st}(Q_M)\big)
 \end{equation}
where $Q_M$ is as above, and 
$$\alpha_{st}(Q_M)=E_{Q_M}\Big(\int_s^t b_u d[M,M]_u|{\cal F}_s\Big).$$
Then $(\rho_{st})_{s,t}$ defines a fully-dynamic risk measure on $(L_p(\F_t))_t$ for every $p(K)<p< \infty$, with $p(K) \in (1, \infty)$ depending only on $K$. This fully-dynamic risk measure 
is strong time-consistent. 
Furthermore $\rho_{0T}$ is dominated and sensitive.
 \end{proposition}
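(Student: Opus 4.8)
The plan is to handle the three claims in turn, relying throughout on Lemma \ref{lemmaBMO} and on the fact that, by the reverse Hölder inequality \eqref{eqRH}, the densities $\mathcal{E}(M)_T$ are uniformly bounded in $L_q(\F_T)$ over $M\in\mathcal{M}$. \emph{Well-definedness.} First I would check that each $\rho_{st}$ really maps $L_p(\F_t)$ into $L_p(\F_s)$. The proof of Lemma \ref{lemmaBMO} in fact yields the \emph{pointwise} bound $|E_{Q_M}(-X|\F_s)|\le C_q^{1/q}E(|X|^p|\F_s)^{1/p}$, with $C_q$ depending only on $K$ and $q$ and hence uniform in $M\in\mathcal{M}$. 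For the penalties, since $b$ is bounded, say $|b|\le B$, one has $|\alpha_{st}(Q_M)|\le B\,E_{Q_M}\big([M,M]_T-[M,M]_s\mid\F_s\big)$; applying the preservation of the BMO property under the Girsanov change of measure $Q_M$ (see \cite{Ka}), the process $M-[M,M]$ is $Q_M$-BMO with norm controlled by $\|M\|_{BMO}\le K$, and its bracket is $[M,M]$, so this conditional expectation is bounded by a constant $g(K)$ depending only on $K$. Consequently the family $\{E_{Q_M}(-X|\F_s)-\alpha_{st}(Q_M)\}_{M\in\mathcal{M}}$ is sandwiched pointwise between $\pm\big(C_q^{1/q}E(|X|^p|\F_s)^{1/p}+Bg(K)\big)$, which is an $L_p(\F_s)$ envelope; hence $\rho_{st}(X)\in L_p(\F_s)$. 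The essential supremum is $\F_s$-measurable and, by stability of $\mathcal{M}$, is attained along an upward-directed countable subfamily.

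\emph{Risk-measure axioms.} The map $X\mapsto E_{Q_M}(-X|\F_s)-\alpha_{st}(Q_M)$ is affine in $X$ (the penalty does not depend on $X$), so monotonicity, convexity and $\F_s$-translation invariance of $\rho_{st}$ follow at once from monotonicity of the conditional expectation and from the fact that an essential supremum of affine maps is convex and order preserving; for translation invariance one uses $E_{Q_M}(f|\F_s)=f$ for $f\in L_p(\F_s)$. Since $p<\infty$, continuity from below and above is automatic by the Remark following the definition of fully-dynamic risk measure. This establishes that $(\rho_{st})_{s,t}$ is a fully-dynamic risk measure on $(L_p(\F_t))_t$.

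\emph{Strong time-consistency.} The key algebraic fact is the cocycle identity for the penalties: for $r\le s\le t$, splitting $\int_r^t=\int_r^s+\int_s^t$ and using the tower property of $E_{Q_M}$ gives
\begin{equation*}
\alpha_{rt}(Q_M)=\alpha_{rs}(Q_M)+E_{Q_M}\big(\alpha_{st}(Q_M)\mid\F_r\big).
\end{equation*}
Together with the stability of $\mathcal{M}$, this is exactly the cocycle condition characterising strong time-consistency (Remark \ref{charcoc} and \cite{BN02}), which is precisely how the $L_\infty$ statement was obtained in \cite{BN01}. To transfer the composition rule $\rho_{rt}(X)=\rho_{rs}(-\rho_{st}(X))$ from $L_\infty(\F_t)$ to $L_p(\F_t)$ I would approximate $X\in L_p(\F_t)$ by its truncations $X_n=(X\wedge n)\vee(-n)\in L_\infty(\F_t)$ and pass to the limit using the continuity from above and below (valid for $p<\infty$) together with the uniform $L_p$ bounds above, which keep $-\rho_{st}(X_n)$ in $L_p(\F_s)$ so that $\rho_{rs}$ may legitimately be applied. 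I expect this limiting step in the composition to be the main obstacle, since it couples the two operators and requires the continuity to survive the outer application of $\rho_{rs}$.

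\emph{Domination and sensitivity of $\rho_{0T}$.} Since $\F_0$ is trivial, $\rho_{0T}(X)=\sup_{M}\big(E_{Q_M}(-X)-\alpha_{0T}(Q_M)\big)$. By Hölder and \eqref{eqRH} at $s=0$ (where $\mathcal{E}(M)_0=1$) we get $E_{Q_M}(-X)\le\|\mathcal{E}(M)_T\|_q\|X\|_p\le C_q^{1/q}\|X\|_p$ uniformly in $M$, while $-\alpha_{0T}(Q_M)\le Bg(K)$; hence $\rho_{0T}(X)\le C_q^{1/q}\|X\|_p+Bg(K)$. By the equivalence in Proposition \ref{propRepBNK} (condition 2 implies domination), $\rho_{0T}$ is dominated. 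For sensitivity, I would fix any $M_0\in\mathcal{M}$: continuity of $M_0$ gives $\mathcal{E}(M_0)_T>0$, so $Q_{M_0}\sim P$, and from $\rho_{0T}(X)\ge E_{Q_{M_0}}(-X)-\alpha_{0T}(Q_{M_0})$ one obtains, taking the supremum over $X$, that the minimal penalty satisfies $\alpha^{\min}(Q_{M_0})\le\alpha_{0T}(Q_{M_0})<\infty$. Thus $Q_{M_0}\sim P$ witnesses sensitivity, completing the proof.
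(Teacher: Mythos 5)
Your proposal is correct and takes essentially the same route as the paper: a uniform bound on the penalties $\alpha_{st}(Q_M)$ (you derive it from Kazamaki's Girsanov-invariance of BMO, where the paper cites Proposition 4.13 of \cite{BN01} and Lemma 4.12 of \cite{BN02} for the same fact), Lemma \ref{lemmaBMO} for well-definedness and for the bound $\rho_{0T}(X)\leq C_q^{1/q}\Vert X\Vert_p + C$ giving domination, equivalence $Q_M\sim P$ together with a finite penalty for sensitivity, and stability of ${\cal M}$ plus the cocycle condition of the penalties, via \cite{BN02}, for strong time-consistency. Two minor remarks only: the characterisation from \cite{BN02} that you invoke requires the \emph{local} condition of the penalties in addition to the cocycle condition (the paper notes both are verified in \cite{BN01}), and your truncation step transferring the composition rule from $L_\infty(\F_t)$ to $L_p(\F_t)$ should be organised as two successive monotone limits (first $X\vee(-m)\downarrow X$, then $\big(X\vee(-m)\big)\wedge n\uparrow X\vee(-m)$) so that continuity from below and from above genuinely apply — with that fixed, this passage usefully spells out a step the paper's citation-style proof leaves implicit.
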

 
\begin{proof}
It is proved in the proof of Proposition 4.13 of \cite{BN01} that for all $M$, 
$$||\alpha_{st}(Q_M)||_{\infty} \leq \Big(\sup_u ||b_u||_{\infty}\Big)\, ||M||_{BMO_2(Q_M)}.$$ 
From Lemma 4.12 of \cite{BN02} and the assumption on  ${\cal M}$, we have  that 
\begin{equation}
 \label{eqse}
\sup_{M \in {\cal M}}||\alpha_{st}(Q_M)||_{\infty} <\infty.
\end{equation}
From Lemma \ref{lemmaBMO}, we obtain that, for all $X \in L_p({\cal F}_t)$,  $$||\rho_{st}( X)||_p \leq C_q^{\frac{1}{q}} ||X||_p+C,$$ for some $C>0$.
This proves that $\rho_{st}$ is well defined on $L_p(\F_t)$ with values on $L_p(\F_s)$. Furthermore, the above equation proves that $\rho_{0T}$ is dominated.  The sensitivity of $\rho_{0T}$ follows 
from (\ref{eqse}) and the observation that every $Q_M$ is equivalent to  $P$ on ${\cal F}_T$.\\
It is also proved in \cite{BN01} that the penalties $\alpha_{st}(Q_M)$ satisfy the local condition and the cocycle condition. Strong 
time-consistency follows then from \cite{BN02}.
\end{proof}
\begin{remark}\hspace{3cm}
 \begin{enumerate}
  \item Making use of arguments presented in \cite{BN01} Section 4.4, the above proof can be adapted to the case of BMO martingales with jumps, considering right-continuous BMO martingales with BMO norm 
  uniformly bounded by $K<\frac{1}{16}$.
  \item Other examples of strong time-consistent fully-dynamic risk measures defined on $L_p$ $1 \leq p \leq \infty$  are constructed in Theorem 3 of \cite{BN03} making use of probability measures solutions to 
  path-dependent martingale problems. These dynamic risk measures are linked to solutions to Path Dependent PDEs.
 \end{enumerate}
\end{remark}

\subsubsection{Example: Strong time-consistent fully dynamic risk measures from BSDEs}
 In this example we consider a $d$-dimensional Brownian motion $W$ on $(\Omega,{\cal F},P)$ and the corresponding
 $P$-completed filtration $({\cal F}_t)_{t}$ generated by $W$.  
 We consider the following BSDE 
 \begin{equation}
\label{eqBSDE}
Y_s=-\xi + \int_s^t g(u,\omega, Z_u) du - \int_s^t Z_u dW_u
\end{equation}
where $0 \leq s \leq t \leq T$, $\xi$ is a ${\cal F}_t$-measurable terminal condition and $g$ is a $\F_t$-adapted driver.
  
\begin{proposition}
 \label{prop1BSDE}
 Assume that $g$ is $\R$-valued, convex in $z$, and uniformly Lipschitz with respect to $z$, i.e. there exists $C>0$, 
 $$|g(u,\omega,z)-g(u,\omega,z')| \leq C||z-z'||$$
 and $E(\int_0^T|g(u,\omega,0)|^2du<\infty$.
 Let $Y_s$ be given by the unique solution to the BSDE (\ref{eqBSDE}).
 Then $\rho_{st}(\xi):=Y_s$ defines a fully-dynamic risk measure on $(L_{2}({\cal F}_t))_t$. 
 This fully-dynamic risk measure is strong time-consistent.
\end{proposition}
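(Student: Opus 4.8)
The plan is to invoke the classical well-posedness and comparison theory for Lipschitz BSDEs and to read off each defining property of a fully-dynamic risk measure from a corresponding property of the solution map $\xi \mapsto (Y,Z)$. Under the standing hypotheses---$g$ uniformly Lipschitz in $z$ with constant $C$ and $E(\int_0^T |g(u,\omega,0)|^2\,du)<\infty$---the linear-growth bound $|g(u,\omega,z)|\leq |g(u,\omega,0)| + C\|z\|$ holds, and for any $\mathcal{F}_t$-measurable $\xi\in L_2$ the BSDE \eqref{eqBSDE} with terminal value $-\xi$ admits a unique solution $(Y,Z)$ with $Y$ continuous and adapted, $\sup_{s\leq u\leq t}E(Y_u^2)<\infty$, and $Z$ square-integrable (Pardoux--Peng). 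In particular $Y_s$ is $\mathcal{F}_s$-measurable and square-integrable, so $\rho_{st}:=(\xi\mapsto Y_s)$ is well defined as a map $L_2(\mathcal{F}_t)\to L_2(\mathcal{F}_s)$.

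Next I would verify the three algebraic properties. For $\mathcal{F}_s$-translation invariance I exploit that $g$ does not depend on $y$: if $(Y,Z)$ solves \eqref{eqBSDE} with terminal $-\xi$ and $f\in L_2(\mathcal{F}_s)$, then $f$ is constant in time and adapted on $[s,t]$, so $(Y-f,Z)$ solves \eqref{eqBSDE} with terminal $-(\xi+f)$ (the shift affects neither the stochastic integral nor the $y$-free driver), whence $\rho_{st}(\xi+f)=Y_s-f=\rho_{st}(\xi)-f$. Monotonicity is the comparison theorem: if $X'\geq X''$, the terminal data satisfy $-X'\leq -X''$, so the corresponding solutions obey $Y'_s\leq Y''_s$, i.e. $\rho_{st}(X')\leq\rho_{st}(X'')$, matching the monotonicity convention of the definition. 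For convexity I use convexity of $g$ in $z$: given $\xi=\lambda\xi_1+(1-\lambda)\xi_2$ with solutions $(Y^i,Z^i)$, the convex combination $\bar Y=\lambda Y^1+(1-\lambda)Y^2$, $\bar Z=\lambda Z^1+(1-\lambda)Z^2$ solves a BSDE with terminal $-\xi$ and driver $\lambda g(u,Z^1_u)+(1-\lambda)g(u,Z^2_u)\geq g(u,\bar Z_u)$, so $\bar Y$ is a supersolution of \eqref{eqBSDE} with terminal $-\xi$; the comparison theorem then gives $\bar Y_s\geq Y_s$, i.e. $\rho_{st}(\lambda\xi_1+(1-\lambda)\xi_2)\leq\lambda\rho_{st}(\xi_1)+(1-\lambda)\rho_{st}(\xi_2)$. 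Continuity from below (and above) for $p=2<\infty$ is then automatic by the Remark following the definition of fully-dynamic risk measure; alternatively it follows from the $L_2$-stability of the solution map with respect to the terminal datum.

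Finally, strong time-consistency reduces to the flow (semigroup) property of BSDE solutions, which is an immediate consequence of uniqueness. Fix $r\leq s\leq t$ and $X\in L_2(\mathcal{F}_t)$, and let $(Y,Z)$ be the solution of \eqref{eqBSDE} on $[r,t]$ with terminal $-X$. Restricting to $[s,t]$, the pair $(Y,Z)$ solves the BSDE on $[s,t]$ with terminal $-X$, so by uniqueness $Y_s=\rho_{st}(X)$; restricting to $[r,s]$, it solves the BSDE on $[r,s]$ with terminal value $Y_s$. Consequently $\rho_{rs}(-\rho_{st}(X))$ is, by definition, the value at $r$ of the BSDE on $[r,s]$ with terminal $-(-\rho_{st}(X))=Y_s$, which by the flow property equals $Y_r=\rho_{rt}(X)$. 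This is exactly the strong time-consistency identity of Definition \ref{def-fdtc}.

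I expect the main obstacle to be the convexity step: one must set up the supersolution cleanly and apply the comparison theorem in the correct direction, checking that it is available for the $y$-independent, $z$-Lipschitz generator under the standing assumptions. The well-posedness and translation-invariance steps are routine once the $y$-independence of $g$ is used, and strong time-consistency is a direct corollary of the uniqueness (flow) property.
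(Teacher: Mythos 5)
Your proof is correct, and it differs from the paper's in an instructive way: the paper's proof is essentially a two-line delegation, invoking Pardoux--Peng \cite{PP} for existence and uniqueness of the $L_2$ solution and then citing Proposition 6.7 of \cite{BEK} as a black box that yields all the risk-measure axioms together with strong time-consistency. You share the first step (Pardoux--Peng well-posedness) but then unpack what the citation covers: translation invariance from the $y$-independence of the driver (with the correct care that the shifted process is only adapted on $[s,t]$, which is where the equation is considered), monotonicity from the comparison theorem, convexity from the supersolution-plus-comparison argument using convexity of $g$ in $z$, continuity from below for free since $p=2<\infty$ (as the paper's own remark after the definition notes), and strong time-consistency from uniqueness via the flow property. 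What your route buys is self-containedness and transparency about exactly which hypothesis drives which axiom (Lipschitz for well-posedness and comparison, convexity in $z$ only for convexity of $\rho_{st}$, uniqueness alone for the composition rule); what the paper's route buys is brevity, since \cite{BEK} packages precisely this standard $g$-expectation argument. The only step in your write-up that genuinely needs a citation rather than a routine check is the comparison theorem for supersolutions of Lipschitz BSDEs (El Karoui--Peng--Quenez style), which is indeed available under the standing assumptions, so there is no gap.
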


\begin{proof}
 Pardoux and Peng proved in \cite{PP} the existence and uniqueness of a solution $(Y_s, Z_s)_{0 \leq s \leq t}$ to (\ref{eqBSDE}) such that 
 $E(\sup_{0 \leq s \leq t } |Y_s|^2) <\infty$ and 
 $E(\int_0^T||Z_u||^2du)<\infty$ for all $\xi \in L_2({\cal F}_t)$.
Proposition 6.7 of \cite{BEK}  yields that $(\rho_{st})_{s,t}$ defines a strong time-consistent fully-dynamic risk measure on 
 $(L_2({\cal F}_t))_{t}$. 
 \end{proof}
 
 \begin{proposition}
 \label{prop2BSDE}
 Assume that $g$ is  $\R$-valued, convex, continuous in $z$ with quadratic growth, i.e. there exists $k>0$ such that $(t,\omega)$ a.e. $|g(u,\omega,z)| \leq k(1+||z||^2)$. 
 Furthermore, assume that $g$ is differentiable in $z$ and that there exists $\;c>0$ such that
 $\frac{\partial g}{\partial z} \leq c (1+||z||)$.
 Let $Y_s$ be given by the unique maximal solution to the BSDE (\ref{eqBSDE}), 
 Then $\rho_{st}(\xi):=Y_s$ defines a fully-dynamic risk measure on $(L_{\infty}({\cal F}_t))_{t}$. 
 This fully-dynamic risk measure is strong time-consistent.
\end{proposition}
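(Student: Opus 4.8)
The plan is to follow the scheme of Proposition \ref{prop1BSDE}, replacing the Lipschitz theory of Pardoux--Peng \cite{PP} by the theory of quadratic BSDEs. The first step is well-posedness: for a bounded terminal condition $\xi\in L_\infty(\F_t)$ the quadratic-growth bound $|g(u,\omega,z)|\leq k(1+\|z\|^2)$ guarantees, via the monotone stability method of Kobylanski, the existence of a bounded maximal solution $(Y,Z)$ of \eqref{eqBSDE} on $[s,t]$, with $Y\in L_\infty$ and $Z$ in the space of BMO integrands. The convexity of $g$ in $z$ together with the growth bound $\partial g/\partial z\leq c(1+\|z\|)$ yields uniqueness among bounded solutions, so the maximal solution is in fact \emph{the} bounded solution. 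This already shows that $\rho_{st}(\xi):=Y_s$ is a well-defined map $L_\infty(\F_t)\to L_\infty(\F_s)$.

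Next I would verify the three defining axioms of a fully-dynamic risk measure. \emph{Monotonicity}: if $\xi'\geq\xi''$ then the terminal data satisfy $-\xi'\leq-\xi''$, so the comparison theorem for quadratic BSDEs gives $Y'_s\leq Y''_s$, i.e. $\rho_{st}(\xi')\leq\rho_{st}(\xi'')$. \emph{$\F_s$-translation invariance}: this is where the absence of a $y$-dependence in the driver is used. If $(Y,Z)$ solves \eqref{eqBSDE} with terminal datum $-\xi$ and $f\in L_\infty(\F_s)$, then $f$ is $\F_r$-measurable for every $r\in[s,t]$ and a direct substitution shows that $(Y-f,Z)$ solves \eqref{eqBSDE} with terminal datum $-(\xi+f)$; by uniqueness $\rho_{st}(\xi+f)=Y_s-f=\rho_{st}(\xi)-f$. \emph{Convexity}: for $\xi',\xi''$ and $\lambda\in[0,1]$, convexity of $g$ in $z$ implies that $\lambda Y'+(1-\lambda)Y''$ is a supersolution of the BSDE with terminal datum $-(\lambda\xi'+(1-\lambda)\xi'')$, so comparison yields $\rho_{st}(\lambda\xi'+(1-\lambda)\xi'')\leq\lambda\rho_{st}(\xi')+(1-\lambda)\rho_{st}(\xi'')$. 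Finally, \emph{continuity from below}, $\xi_n\uparrow\xi\Rightarrow\rho_{st}(\xi_n)\downarrow\rho_{st}(\xi)$ $P$-a.s., is precisely the content of the monotone stability theorem: the decreasing terminal data $-\xi_n\downarrow-\xi$ produce decreasing solutions whose limit is the solution associated with $-\xi$.

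For \emph{strong time-consistency} I would invoke the concatenation (flow) property of the BSDE. Fix $r\leq s\leq t$ and $\xi\in L_\infty(\F_t)$ and let $(Y,Z)$ be the solution on $[r,t]$ with terminal datum $-\xi$. Its restriction to $[s,t]$ is the solution with terminal datum $-\xi$, whence $Y_s=\rho_{st}(\xi)$; its restriction to $[r,s]$ is then the solution with terminal datum $\rho_{st}(\xi)=-(-\rho_{st}(\xi))$, whence $Y_r=\rho_{rs}(-\rho_{st}(\xi))$. Since $Y_r=\rho_{rt}(\xi)$ by definition, we obtain $\rho_{rt}(\xi)=\rho_{rs}(-\rho_{st}(\xi))$, which is strong time-consistency in the sense of Definition \ref{def-fdtc}. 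Uniqueness of the bounded solution is exactly what makes these restrictions unambiguous; this is the same mechanism behind Proposition 6.7 of \cite{BEK} quoted for Proposition \ref{prop1BSDE}.

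The main obstacle is that the quadratic growth takes us outside the Lipschitz framework, so none of the above can be quoted from \cite{PP}. The delicate point is continuity from below: for quadratic BSDEs stability under monotone convergence of terminal data is not automatic and rests on Kobylanski's monotone stability argument, together with the verification that the increasing limit of the maximal solutions is again the maximal solution of the limiting equation. The convexity of $g$ and the derivative bound $\partial g/\partial z\leq c(1+\|z\|)$ play a double role here: they are used to secure uniqueness of the bounded solution---so that ``maximal solution'' is unambiguous and the flow property holds---and to transfer convexity from the driver to the operator $\rho_{st}$.
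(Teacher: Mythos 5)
Your proposal is correct and follows essentially the same route as the paper: the paper's proof simply cites Kobylanski \cite{Ko} for the existence and uniqueness of the maximal bounded solution and then invokes Proposition 6.7 of \cite{BEK} to conclude that $(\rho_{st})_{s,t}$ is a strong time-consistent fully-dynamic risk measure. What you have done is unpack the content of that second citation by hand (comparison theorem for monotonicity and convexity, the $y$-independence of the driver for translation invariance, monotone stability for continuity from below, and the flow property for strong time-consistency), which matches the mechanism the paper relies on.
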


\begin{proof}
Kobylanski proved in \cite{Ko} the existence and uniqueness of a maximal bounded solution  to (\ref{eqBSDE})  for all $\xi \in L_{\infty}({\cal F}_t)$.
From Proposition 6.7 of \cite{BEK}, we have  that $(\rho_{st})_{s,t}$ defines a strong time-consistent fully-dynamic risk measure on  $(L_{\infty}({\cal F}_t))_{t}$. 
 \end{proof}
 
Observe that the risk measures constructed here above are \emph{not} normalised, in general. In fact we have the following result.
 
\begin{corollary} 
If in Proposition \ref{prop1BSDE} and in Proposition \ref{prop2BSDE}, we assume in addition that $g(t,\omega,0)=0, \;\; (t,\omega) \;a.e.$, then the corresponding strong time-consistent fully-dynamic risk measures $(\rho_{st})_{s,t}$
are normalised and thus satisfy the restriction property \eqref{restriction}. Thus $(\rho_{st})_{s,t})$ defines a conditional $g$-expectation as introduced by Peng \cite{Peng}.\\
\end{corollary}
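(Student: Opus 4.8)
The plan is to prove the three assertions of the corollary in order: first the normalisation $\rho_{st}(0)=0$, then the restriction property \eqref{restriction} via Corollary \ref{Corollary2.5}, and finally the identification with Peng's conditional $g$-expectation.

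First I would establish normalisation. Setting $\xi=0$ in the BSDE \eqref{eqBSDE} and using the added hypothesis $g(u,\omega,0)=0$ $(u,\omega)$-a.e., one checks at once that the pair $(Y,Z)\equiv(0,0)$ solves the equation on $[s,t]$: substituting $Y_u=0$ and $Z_u=0$ makes every term on the right-hand side vanish, since $\int_s^t g(u,\omega,0)\,du=0$ and $\int_s^t 0\,dW_u=0$. In the setting of Proposition \ref{prop1BSDE} the solution of the Lipschitz BSDE is unique, so necessarily $Y_s=0$ and hence $\rho_{st}(0)=0$. In the setting of Proposition \ref{prop2BSDE} the risk measure is built from the \emph{maximal} bounded solution; here I would argue that $(0,0)$ is in fact the maximal bounded solution. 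Being a bounded solution, it is dominated by the maximal one, so $\rho_{st}(0)=Y_s\geq 0$; the reverse inequality follows from the comparison principle for quadratic-growth BSDEs, which is available precisely because of the one-sided derivative bound $\partial g/\partial z\leq c(1+\|z\|)$ assumed in Proposition \ref{prop2BSDE}, and which forces every bounded solution with terminal value $0$ to lie below the zero process. Thus again $\rho_{st}(0)=0$, and in both cases $(\rho_{st})_{s,t}$ is normalised.

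Next, I would invoke the strong time-consistency of $(\rho_{st})_{s,t}$ already established in Propositions \ref{prop1BSDE} and \ref{prop2BSDE}, together with the normalisation just proved, and apply Corollary \ref{Corollary2.5}. Its forward direction yields that $(\rho_{st})_{s,t}$ is time-consistent and, in particular, satisfies the restriction property \eqref{restriction}, namely $\rho_{rt}(Y)=\rho_{rs}(Y)$ for every $Y\in L_p(\F_s)$ and all $r\leq s\leq t$.

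Finally, for the identification with Peng's object, I would observe that for a fixed driver $g$ with $g(\cdot,0)=0$ the conditional $g$-expectation of \cite{Peng} is exactly the operator sending an $\F_t$-measurable terminal value to the time-$s$ value of \eqref{eqBSDE}, characterised by the two consistency properties it must obey: the tower rule $\rho_{rt}=\rho_{rs}(-\rho_{st}(\cdot))$ and the independence of the evaluation from the horizon $t$ on inputs that are already $\F_s$-measurable. The first is strong time-consistency, which holds, and the second is precisely the restriction property \eqref{restriction} obtained above, while $\rho_{st}(0)=0$ is the remaining defining axiom. Hence the two-index family collapses to a genuine single-index conditional $g$-expectation (modulo the sign convention by which $\rho_{st}$ evaluates the terminal value $-\xi$), which is the assertion. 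The hard part will be the quadratic case of the normalisation step: verifying that the zero process is the \emph{maximal} bounded solution selected by Proposition \ref{prop2BSDE}, rather than merely \emph{a} solution; this is where the comparison theorem for quadratic BSDEs, legitimated by the differentiability and one-sided growth conditions on $\partial g/\partial z$, does the essential work. The Lipschitz case and the two subsequent steps are then routine consequences of uniqueness and of Corollary \ref{Corollary2.5}.
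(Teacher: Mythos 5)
Your proof is correct and follows exactly the argument the paper intends (the corollary is stated there without its own proof): normalisation of $\rho_{st}(0)$ via the fact that $(0,0)$ solves the BSDE when $g(\cdot,\cdot,0)=0$ — uniqueness in the Lipschitz case, and the comparison theorem to identify the maximal bounded solution in the quadratic case — followed by Corollary \ref{Corollary2.5} to pass from strong time-consistency plus normalisation to the restriction property, the identification with Peng's conditional $g$-expectation then being essentially definitional (up to the sign convention). The only point worth making explicit in the quadratic case is that the comparison theorem of Kobylanski requires a two-sided bound $\|\partial g/\partial z\|\leq C(1+\|z\|)$, whereas Proposition \ref{prop2BSDE} assumes only the upper bound; the missing lower bound on the gradient follows from the convexity of $g$ in $z$ combined with its quadratic growth, so your appeal to comparison is legitimate.
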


\begin{remark}
  Under the assumptions of Proposition \ref{prop2BSDE} (quadratic growth in $z$), $\rho_{st}$ admits a dual representation of the kind (2.14) (cf \cite{BEK}) with a set of
  BMO continuous martingales. However, in general the BMO norms of  these BMO martingales are not uniformly bounded.  Therefore in general it is not possible to find 
  $p<\infty$ 
  such that $(\rho_{st})_{s,t}$ defines a fully-dynamic risk measure on $(L_p({\cal F}_t))_t$.
  \end{remark}

\begin{remark}
We also recall the following result from \cite{DPRG}.
On the given $P$-completed Brownian filtration, every normalised strong time-consistent fully-dynamic risk measure $(\rho_{st})_{s,t}$ is the increasing limit of a sequence of normalised strong time-consistent fully-dynamic risk measures $(\rho^n_{st})_{s,t}$, where each $\rho^n_{st}$ is associated to a BSDE with a convex, Lipschitz driver $g_n$.
  \end{remark}

\bigskip
\subsection{A risk-indifference price operator $x_{st}$.}

Risk-indifferent pricing was introduced in a static set-up  as an alternative pricing technique 
to utility-indifference pricing in incomplete markets.  Instead of considering the agents' attitude to a financial investment in terms of utility functions, the risk-indifference approach uses risk measures. The connection between these two approaches is given by 
the fact that utility-indifference with exponential utility function corresponds substantially to a risk-indifference pricing with entropic risk measure.

\bigskip
Hereafter we consider the whole family of risk-indifference prices $(x_{st})_{s,t}$ generated by a strong time-consistent fully-dynamic risk measure 
$(\rho_{st})_{s,t}$ on
$(L_p(\F_t))_t$, $1 \leq p \leq \infty$. 
Risk-indifference pricing is also studied in \cite{KS} for the case of dynamic risk measures 
 $(\rho_t)_t$ as in Subsection \ref{subs2.2.1} and 
 in the case
 $p=\infty$ only.
 
Our first goal is to identify the conditions under which a risk-indifferent price evaluation $x_{st}$, associated to $\rho_{st}$, satisfies the properties for being a convex price operator.

\bigskip
First of all fix $s,t\in [0,T]$: $s\leq t$.
From a qualitative perspective, the risk-indifference (seller's discounted) price $x_{st}(X)$ at time $s$ for any (discounted) financial position $X $, at time $t$,
is given by the equation:
\begin{equation}\label{ri}
\underset{\theta \in \Theta_{st}}{\essinf} \,\rho_{st} (x_{st}(X) +  y_s + Y_{st} (\theta) - X) =
\underset{\theta \in \Theta_{st}}{\essinf}\, \rho_{st} (y_s + 
Y_{st}(\theta)) \quad P-a.s.,
\end{equation}
where 
$y_s\in L_p(\F_s)$ represents the money market account and, together with the price $x_{st}(X)$, is the 
initial capital at $s$. The $\F_t$-measurable 
$Y_{st}(\theta)$ represents the value  of an admissible portfolio $\theta$ on the time horizon $(s,t]$ and the set $
\Theta_{st}$ represents the  admissible portfolios. 
Under suitable integrability conditions, the equation above can be rewritten as
\begin{equation}
\label{eqRIP0}
x_{st}(X)=\underset{\theta \in \Theta_{st}}{\essinf}\, \rho_{st}(Y_{st}(\theta) -X) - 
\underset{\theta \in \Theta_{st}}{\essinf}\, \rho_{st}(Y_{st}(\theta))
\end{equation}
by the translation invariance of $\rho_{st}$. 
Here below we are more specific about  portfolios and value processes, so to achieve a general definition of risk-indifference price.

\vspace{2mm}
Equation \eqref{eqRIP0} makes sense for all those strategies for which $Y_{st}(\theta) \in L_p(\F_t)$, for all $t$. 
In this paper we aim at a definition of risk-indifference price applicable with large generality. Then we proceed with the following set of definitions.

\vspace{2mm}
The market is characterised by a number of underlying assets, whose (discounted) price is given by $(\Pi_{t})_{t \in [0,T]}$ which is a $\Ff$-adapted locally bounded semimartingale in 
$\R^d$.

\begin{defn}
\label{all-strategies}
Denote $\Xi$ the set of all $\Ff$-predictable processes $(\theta_t)_{t\in [0,T]}$ with values in $\R^d$, integrable with respect to the semimartingale $(\Pi_t)_{t \in [0,T]}$ above, and such that for all $0 \leq s \leq T$, the integral process 
$(Y_{st}(\theta))_{t \in [0,T]}$ is $\Ff$-adapted and bounded from below. 
\end{defn}

\begin{defn}
\label{strategies}
The set of \emph{admissible strategies} on $[0,T]$ is constituted by a convex subset  $\Theta := \Theta_{0T} \subseteq \Xi$ satisfying the 
\emph{stability property}:
for any $A \in \F_s$ and any $\theta^{(1)}, \theta^{(2)}, \theta^{(3)} \in \Theta$ the strategy $\theta=(\theta_t)_t$ given by 
\[
\theta_t :=
\begin{cases}
\theta^{(1)}_t,& \quad t \in (0, s]\\
1_A \theta^{(2)}_t + 1_{A^c} \theta^{(3)}_t, &\quad t \in (s, T]
\end{cases}
\quad 
\]
belongs to $\Theta$. Moreover we consider 
$0 \in \Theta$, $Y_{st}(0)=0$. \\
For any $s \leq t$, the set $\Theta_{st}$ of admissible strategies on $(s,t]$ is constituted by all strategies $\theta 1_{(s,t]}$ with $\theta \in \Theta$.
\end{defn}

Clearly if $\Theta = \Xi$, then the stability property is naturally satisfied. 
Our choice to assume $\Theta \subseteq \Xi$ allows for a framework where it is possible to consider exogenous constrains on the applicable strategies. 

\begin{defn}
\label{strategiesC}
For $p \in [1,\infty]$, the sets of \emph{feasible claims} $(\mathcal{C}_{st}^{p})_{s,t}$ are defined by
\[
\mathcal{C}_{st}^{p} := \big\{ g \in L_{p} (\F_t) \, : \quad \exists\, \theta \in \Theta \textrm{ such that } g \leq Y_{st}(\theta) \big\}.
\]
\end{defn}

Note that $0 \in \mathcal{C}_{st}^p$.

\vspace{2mm}
For the risk-indifference price to be well defined, we introduce the following technical assumption,  standing for this paper.

\vspace{2mm}
\noindent
{\bf Assumption:}
\begin{equation}
\label{techass}
\underset{g \in {\cal C}_{st}^{p}} \essinf \rho_{st}(g) > - \infty \qquad P-a.s. 
\end{equation}

\vspace{2mm}
Motivated by the above considerations we give the following definition.
\begin{defn}
\label{defRIP}
Let $p \in [1,\infty]$. For any $s,t\in [0,T]:$ $s\leq t$, the operator 	
\begin{equation}
x_{st}(X): =\underset{g \in {\cal C}_{st}^{p}}{\essinf} \: 
\rho_{st}(g-X) - \underset{g \in {\cal C}_{st}^{p}}{\essinf}\: \rho_{st}(g)
\label{eqRIP}
\end{equation}
is well-defined $P$-a.s. for all $X \in L_p(\F_t)$ as an ${\cal F}_s$-measurable random variable.
We call $x_{st}(X)$ the \emph{risk-indifference price of $X$ given by $\rho_{st}$} when
the operator $x_{st}$ is considered on 
\[
Dom\, x_{st} := \big\{ X \in L_{p}(\F_t)\, : \: x_{st}(X) \in L_{p}(\F_s) \big\}.
\]
\end{defn}
\begin{remark}
Due to the stability property of $\Theta$,  the set $\{\rho_{st}(g-X),\; g \in {\cal C}^{p}_{st}\}$ has the lattice property for all 
$X \in L_{p}(\F_t)$.
\end{remark}
\begin{remark}
We remark that, if the set of strategies $\Theta$ is such that $Y_{st}(\theta) \in L_p(\F_t)$, for all $t$, then $Y_{st}(\theta)\in {\cal C}^{p}_{st}$. By monotonicity of the risk-measures, we can see that \eqref{eqRIP} coincides with \eqref{eqRIP0}.
\end{remark}

\vspace{2mm}
\begin{lemma}\label{Linf}
Let $p \in [1,\infty]$.
\begin{enumerate}
\item
If $\underset{g \in {\cal C}_{st}^{p}}\essinf\rho_{st}(g)$ belongs to  $L_{p}(\F_t)$, then $Dom \, x_{st}=L_{p}(\F_t)$.
\label{remdom}
\item
For all $s\leq t$, we have that, for all $X \in L_\infty(\F_t)$, $x_{st}(X)$ belongs to $L_\infty(\F_s)$. 
In particular, $L_\infty(\F_t) \subseteq Dom\, x_{st}$.
\end{enumerate}
\end{lemma}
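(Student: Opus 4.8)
The plan is to reduce each part to a pair of two-sided bounds on $x_{st}(X)$ by elements of the target space. Since the standing Assumption \eqref{techass} already guarantees that $x_{st}(X)$ is finite $P$-a.s. and $\F_s$-measurable, the only thing left to check is $p$-integrability. Throughout I would abbreviate $A(X):=\essinf_{g\in\mathcal{C}_{st}^{p}}\rho_{st}(g-X)$, so that $x_{st}(X)=A(X)-A(0)$ with $A(0)=\essinf_{g}\rho_{st}(g)$. Taking $g=0\in\mathcal{C}_{st}^{p}$ in the infimum immediately gives the cheap upper bound $A(X)\le\rho_{st}(-X)$, hence $x_{st}(X)\le\rho_{st}(-X)-A(0)$; all the difficulty lies in producing a matching lower bound in the right space.

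For part 2, fix $X\in L_\infty(\F_t)$ and put $c:=\Vert X\Vert_\infty$, a constant. From $g-c\le g-X\le g+c$, monotonicity and $\F_s$-translation invariance of $\rho_{st}$ give $\rho_{st}(g)-c\le\rho_{st}(g-X)\le\rho_{st}(g)+c$ uniformly in $g$. Taking the essential infimum over $g$ then yields $A(0)-c\le A(X)\le A(0)+c$, i.e. $\vert x_{st}(X)\vert\le\Vert X\Vert_\infty$. Thus $x_{st}(X)\in L_\infty(\F_s)\subseteq L_p(\F_s)$ and $L_\infty(\F_t)\subseteq Dom\,x_{st}$. I note that this argument needs only \eqref{techass}, not the extra hypothesis of part 1.

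For part 1 the delicate point is the lower bound for \emph{unbounded} $X$, where the monotonicity trick is useless (enlarging the argument only lowers the risk). My plan is to replace monotonicity by convexity. First I would verify that $\mathcal{C}_{st}^{p}$ is convex: if $g_i\le Y_{st}(\theta_i)$ with $\theta_i\in\Theta$, then linearity of $\theta\mapsto Y_{st}(\theta)$ together with convexity of $\Theta$ (Definition \ref{strategies}) gives $\lambda g_1+(1-\lambda)g_2\le Y_{st}(\lambda\theta_1+(1-\lambda)\theta_2)$, so the convex combination is again feasible. Inserting $\lambda\theta_1+(1-\lambda)\theta_2$ as a competitor, using convexity of $\rho_{st}$, and then taking the essential infima over $g_1$ and over $g_2$ separately (the additive constant in each step passes through the $\essinf$), I obtain that $X\mapsto A(X)$ is convex on $L_p(\F_t)$.

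Convexity of $A$ is then the lever. Evaluating it at the midpoint of $X$ and $-X$ gives $A(0)\le\tfrac12 A(X)+\tfrac12 A(-X)$, that is $A(X)\ge 2A(0)-A(-X)$. Since $A(-X)=\essinf_g\rho_{st}(g+X)\le\rho_{st}(X)$ (take $g=0$ once more), and since by hypothesis $A(0)=\essinf_g\rho_{st}(g)$ lies in $L_p(\F_s)$, I get the lower bound $x_{st}(X)=A(X)-A(0)\ge A(0)-\rho_{st}(X)\in L_p(\F_s)$. Combined with the upper bound $x_{st}(X)\le\rho_{st}(-X)-A(0)\in L_p(\F_s)$, this traps $x_{st}(X)$ between two $L_p(\F_s)$-functions, so $x_{st}(X)\in L_p(\F_s)$ for every $X\in L_p(\F_t)$, i.e. $Dom\,x_{st}=L_p(\F_t)$. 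The hard part, and the reason a naive estimate with a single fixed dual measure $Q$ fails, is precisely this lower bound: the essential infimum over strategies cannot be dominated by one $Q$ in the representation \eqref{rm-representation0}, and it is the joint convex structure — convexity of both $\mathcal{C}_{st}^{p}$ and of $\rho_{st}$ — that makes the bound go through.
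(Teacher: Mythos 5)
Your proposal is correct and follows essentially the same route as the paper: both rest on the cheap upper bound $A(X)\le\rho_{st}(-X)$ from taking $g=0$, the convexity of $x_{st}$ (via convexity of $\mathcal{C}^p_{st}$ and $\rho_{st}$) yielding $-x_{st}(-X)\le x_{st}(X)$, and monotonicity/translation invariance for the $L_\infty$ case. The only cosmetic difference is that the paper first treats $X\ge 0$ and then reduces the general case through $|X|$ via $|x_{st}(X)|\le x_{st}(|X|)$, whereas you sandwich $x_{st}(X)$ directly between the two explicit $L_p(\F_s)$ bounds $A(0)-\rho_{st}(X)$ and $\rho_{st}(-X)-A(0)$.
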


\begin{proof}
For all $X \geq 0$, we have 
\[\begin{split}
0 \leq x_{st}(X) &
=  \underset{g \in {\cal C}_{st}^{p}}\essinf\rho_{st}(g-X) - \underset{g \in {\cal C}_{st}^{p}}\essinf\rho_{st}(g)\\
& \leq \rho_{st}(-X) - \underset{g \in {\cal C}_{st}^{p}}\essinf\rho_{st}(g)
\in  L_{p}(\F_s).
\end{split}\]
From the monotonicity, the convexity of $x_{st}$, and $x_{st}(0)=0$, we obtain that, for all $X \in L_{p}(\F_t)$, 
$$
-x_{st}(|X|) \leq -x_{st}(-X) \leq x_{st}(X) \leq x_{st}(|X|).
$$
Then $ |x_{st}(X)| \leq x_{st}(|X|)$ and $x_{st}(X) \in L_{p}(\F_s)$. 
By this we have proved 1.

Now, let $X \in L_\infty({\cal F}_t)$, then there are two real numbers $N,M$ such that $N \leq X \leq M$. It follows easily from the translation invariance property that 	for all $s$, 
$N \leq x_{st}(X) \leq M$. This yields 2.
\end{proof}	

In particular we stress that, for any $p \in [1,\infty]$, the operator $x_{st}$ in (\ref{eqRIP}) restricted to $L_{\infty}({\cal F}_t)$  is always well-defined  with values 
in $L_\infty(\F_s)$.

\bigskip
We give now an alternative formula for $x_{st}$.
\begin{lemma}
For all $s \leq t$, for all $X \in L_{p}({\cal F}_t)$, we have
$$
\underset{g \in {\cal C}^{p}_{st}}{\essinf} \: \rho_{st}(g-X)
\:= \:
\underset{g \in {\cal C}^{\infty}_{st}}{\essinf} \: \rho_{st}(g-X) ,
$$
where 
\[
\mathcal{C}_{st}^{\infty} := \big\{ g \in L_{ \infty} (\F_t) \, : \; \exists\, \theta \in 
\Theta \textrm{ such that } g \leq Y_{st}(\theta) \big\}.
\]
Then, for all $X \in L_{p}({\cal F}_t)$, 
\begin{equation}
x_{st}(X)=\underset{g \in {\cal C}^{\infty}_{st}}{\essinf} \: \rho_{st}(g-X) - 
\underset{g \in {\cal C}^{\infty}_{st}}{\essinf} \: \rho_{st}(g) .
\label{eqRIP2}
\end{equation}	
\label{lemmarest}		
\end{lemma}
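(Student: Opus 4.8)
The statement for $p=\infty$ is vacuous, since then $\mathcal{C}^{p}_{st}=\mathcal{C}^{\infty}_{st}$ by definition; so throughout I assume $p<\infty$ and use that in this regime each $\rho_{st}$ is continuous both from below and from above $P$-a.s.\ (as recorded in the Remark following the definition of fully-dynamic risk measure). The plan is to establish the two opposite inequalities between $\essinf_{g\in\mathcal{C}^{p}_{st}}\rho_{st}(g-X)$ and $\essinf_{g\in\mathcal{C}^{\infty}_{st}}\rho_{st}(g-X)$. Because $L_\infty(\F_t)\subseteq L_p(\F_t)$ on a probability space and the feasibility constraint $g\le Y_{st}(\theta)$ is identical in both definitions, we have $\mathcal{C}^{\infty}_{st}\subseteq\mathcal{C}^{p}_{st}$; an essential infimum over a larger set can only be smaller, which gives $\essinf_{g\in\mathcal{C}^{p}_{st}}\rho_{st}(g-X)\le\essinf_{g\in\mathcal{C}^{\infty}_{st}}\rho_{st}(g-X)$ for free. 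The whole content is therefore the reverse inequality, for which it suffices to prove that $\rho_{st}(g-X)\ge A:=\essinf_{h\in\mathcal{C}^{\infty}_{st}}\rho_{st}(h-X)$ holds $P$-a.s.\ for every single $g\in\mathcal{C}^{p}_{st}$.

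So fix $g\in\mathcal{C}^{p}_{st}$ and pick $\theta\in\Theta$ with $g\le Y_{st}(\theta)$; by Definition \ref{all-strategies} the process $Y_{st}(\theta)$ is bounded from below, say $Y_{st}(\theta)\ge -C$. The difficulty is that $g$ may be unbounded both from above and from below, so a single truncation does not land in $L_\infty$ while preserving feasibility. I would handle this by a \emph{double monotone truncation}. First truncate from below: for $m\ge C$ set $g_m:=g\vee(-m)$. Since $g\le Y_{st}(\theta)$ and $-m\le -C\le Y_{st}(\theta)$, we still have $g_m\le Y_{st}(\theta)$, while $g_m\downarrow g$ as $m\to\infty$; continuity from above then yields $\rho_{st}(g_m-X)\uparrow\rho_{st}(g-X)$ $P$-a.s. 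Next, for each fixed $m$, truncate from above: set $g_{m,n}:=g_m\wedge n$, which is bounded (it lies in $[-m,n]$), satisfies $g_{m,n}\le g_m\le Y_{st}(\theta)$, and hence belongs to $\mathcal{C}^{\infty}_{st}$; moreover $g_{m,n}\uparrow g_m$ as $n\to\infty$, so continuity from below gives $\rho_{st}(g_{m,n}-X)\downarrow\rho_{st}(g_m-X)$ $P$-a.s.

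Each $g_{m,n}$ being a feasible bounded claim, $\rho_{st}(g_{m,n}-X)\ge A$ $P$-a.s. Letting first $n\to\infty$ and then $m\to\infty$ and using the two monotone convergences above, the inequality passes to the limit and gives $\rho_{st}(g-X)\ge A$ $P$-a.s., as required; combined with the trivial direction this proves the claimed identity for the first term. The second formula \eqref{eqRIP2} then follows at once: applying the identity with $X=0$ shows $\essinf_{g\in\mathcal{C}^{p}_{st}}\rho_{st}(g)=\essinf_{g\in\mathcal{C}^{\infty}_{st}}\rho_{st}(g)$, so both infima in Definition \ref{defRIP} may be replaced by their $\mathcal{C}^{\infty}_{st}$-counterparts. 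The main obstacle is exactly the one addressed by the two-step truncation: the constraint $g\le Y_{st}(\theta)$ interacts with truncation only through the lower bound $-C$, so one must truncate from below \emph{before} from above, and the passage to the limit relies essentially on having continuity from both sides, which is available precisely because $p<\infty$.
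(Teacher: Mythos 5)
Your proof is correct and takes essentially the same route as the paper's: the inclusion $\mathcal{C}^{\infty}_{st}\subseteq\mathcal{C}^{p}_{st}$ gives one inequality for free, and the reverse inequality is obtained by truncating an arbitrary $g\in\mathcal{C}^{p}_{st}$ from below (exploiting the lower bound $-C$ of the dominating $Y_{st}(\theta)$, which keeps the truncation feasible) and then from above, concluding by continuity from below of $\rho_{st}$. The only difference is minor: the paper truncates from below \emph{once}, at the fixed level $-C$, and compares $g$ with $g\vee(-C)$ by plain monotonicity of $\rho_{st}$, so your sequence $g_m=g\vee(-m)$ and the accompanying appeal to continuity from above (hence the restriction to $p<\infty$ at that step) are valid but unnecessary; apart from this, the two arguments coincide, including the observation that the case $p=\infty$ is trivial.
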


\begin{proof}
Since ${\cal C}^{\infty}_{st} \subset {\cal C}^{p}_{st}$, then we have that, for all $X\in L_p(\F_t)$, 
$$
\underset{g \in {\cal C}^{p}_{st}}\essinf  \rho_{st}(g-X) \leq 
\underset{g \in {\cal C}^{\infty}_{st}}\essinf \rho_{st}(g-X) .
$$

Let $ g_0 \in {\cal C}^{p}_{st}$. Let $\theta \in \Theta$  such that  $ g_0 \leq Y_{st}(\theta)$. From Definition \ref{all-strategies} we have that there is  $C>0$ and $Y_{st}(\theta) \geq -C$.
It follows that $g'=\sup(g_0,-C)$ satisfies $g_0 \leq g' \leq Y_{s,t}(\theta)$  and $|g'| \leq \sup (C,|g_0|)$. 
Thus $ g' \in {\cal C}^{p}_{st}$ and $\rho_{st}(g_0 -X) \geq \rho_{st}(g'-X) $.

 The random variable  $g'$ is bounded from below  and thus it is the increasing limit of the sequence $g'_n=\inf(g',n)$. 
 Observe that $g'_n \in L_\infty(\F_t)$, $g'_n \leq Y_{st}(\theta)$.  
The continuity from below of $\rho_{st}$ yields $\rho_{st}(g'-X)=\lim_{n \rightarrow \infty} \rho_{st}(g'_n-X)$. 
This gives
$$
\rho_{st}(g_0-X) \geq \underset{g \in {\cal C}^{\infty}_{st}}\essinf \rho_{st}(g-X) .
$$ 
The proof is complete.
\end{proof}

\subsubsection*{Examples.}
Risk-indifference prices are constructed from fully-dynamic risk measures $(\rho_{st})_{s,t}$ on $(L_p(\F_t))_t$.
As illustration, we have seen earlier those generated from BMO martingales and from BSDEs.
\begin{itemize}
\item
We observe here that \eqref{techass} is always satisfied when we consider a set of strategies $\Theta$ such that, for all $s \leq t$, there is a random variable $\xi_t \in L_p(\F_t)$ such that $Y_{st}(\theta) \leq \xi_t$, for all $\theta \in \Theta$.
This follows from the definition of ${\cal C}^p_{st}$ (Definition \ref{strategiesC}) and the monotonicity of $\rho_{st}$.\\
Also observe that, for these strategies, Lemma \ref{Linf} point 1. holds.
\item
Let us consider the fully-dynamic risk measures from the BMO martingales as constructed in Proposition \ref{BMOcont}.
Let $p'$ be such that $p(K) < p' < p <\infty$, where $p(K)$ is defined in Proposition \ref{BMOcont}.
Consider $\Theta$ to be the set of strategies so that, for all $s\leq t$, there exists a random variable $\xi_t\in L_{p'}(\F_t)$ and $\xi_t \notin L_p(\F_t)$, such that $Y_{st}(\theta) \leq \xi_t$, for all $\theta \in \Theta$.
From the arguments of Lemma \ref{lemmaBMO} and Proposition \ref{BMOcont}, we can see that $\rho_{st}(\xi_t) \in L_{p'}(\F_s)$.
From the monotonicity, we see that \eqref{techass} is satisfied. 
However, in this case, Lemma \ref{Linf} point 1. does not hold.
\end{itemize}

\subsubsection{Fully-dynamic risk measures vs. dynamic risk-measure}
\label{subs2.2.1}

Set $p \in [1,\infty]$. In large part of the literature on risk-measures, a so-called \emph{dynamic risk measure} $(\rho_{s})_{s}$ is characterised by  only one time index. See e.g. \cite{AP}, \cite{KS}. 

In this case the risk measure refers to the evaluation of positions at a fixed time horizon $T<\infty$ and this corresponds to $\rho_s:=\rho_{sT}$. 

In particular, with this definition we have that the risk evaluation of any $X \in L_p(\F_t)$ is the same \emph{for every} $t \in [s,T]$.

In the framework of fully-dynamic risk measures this corresponds to the \emph{restriction property}:
$$
\rho_s(X) =\rho_{sT}(X)= \rho_{st}(X) \quad X\in L_p(\F_t), \quad  s\leq t \leq T.
$$
See \eqref{restriction}.
For the dynamic risk measure $(\rho_t)_t$, \emph{time-consistency} is given as  
  $$
  \rho_t(X)=\rho_t(Y)  \Longrightarrow \rho_s(X)=\rho_s(Y),\qquad s \leq t,\;\; X,Y \in L_p({\cal F}_T).
  $$

 \begin{remark} 
 \label{remark3.1}
From Corollary \ref{Corollary2.5}, we can see that if the fully-dynamic risk measure $(\rho_{st})_{s,t}$ is normalised, i.e. $\rho_{st}(0)=0$ for all $s\leq t$,
then there is a one-to-one correspondence between 
the strong time-consistent $(\rho_{st})_{s,t}$ and 
the time-consistent $(\rho_s)_{s}$.
In fact, 
$$
\rho_{st}(X) =\rho_{sT} (X) = \rho_s (X) ,\qquad X\in L_p(\F_t), \quad  t \geq s.
$$
We stress that the result is not true if $\rho_{st}$ is not normalised.
\end{remark}

\vspace{2mm}
Now, let $(\rho_{st})_{s,t}$ be not normalised, then we know that
the family $(\check\rho_{st})_{s,t}$ with
$$
\check\rho_{st}(X) := \rho_{st}(X+\rho_{st}(0)) = \rho_{st}(X) - \rho_{st}(0), \quad X\in L_p(\F_t)
$$
is normalised.

\begin{remark}
\label{tc-normalised}
If the family $(\rho_{st})_{s,t}$ is strong time-consistent and not normalised, then the normalisation $(\check\rho_{st})_{s,t}$ produces a family which is \emph{not} strong time-consistent, but only time-consistent. See Remark \ref{stc and normalisation} and Remark \ref{tc and normalisation}.
\end{remark}

In the study of risk-indifference pricing, we can see that the definition of the price itself admits equivalent representations in terms of both the fully-dynamic risk-measures $(\rho_{st})_{s,t}$ and $(\check\rho_{st})_{s,t}$.
Namely, we have: 
\begin{align*}
x_{st}(X) &= \underset{g \in {\cal C}_{st}^{p}}{\essinf} \: 
\rho_{st}(g-X) - \underset{g \in {\cal C}_{st}^{p}}{\essinf}\: \rho_{st}(g) \\
&= \underset{g \in {\cal C}_{st}^{p}}{\essinf} \: 
\check\rho_{st}(g-X) - \underset{g \in {\cal C}_{st}^{p}}{\essinf}\: \check\rho_{st}(g), \qquad X\in L_p(\F_t).
\end{align*}
However, the first representation is in terms of a \emph{strong} time-consistent family, while the second is not.

\vspace{2mm}
As we shall see in the sequel of the paper, \emph{strong} time-consistency is \emph{crucial} to guarantee that the risk-indifferent prices $(x_{st})_{s,t}$ constitutes a convex price system. It is then not enough to work with the normalised version of the given strong time-consistent risk-indifferent measure. See Remark \ref{stc and normalisation}.

\vspace{2mm}
For this reason in the sequel of the paper we focus on the general first representation of the risk-indifference prices. See Definition \ref{defRIP}.

\vspace{2mm}
\subsubsection{Properties of the risk-indifference price operator $x_{st}$}

Hereafter we study the properties of the risk-indifference price operator $x_{st}$ for  fixed $s \leq t$. 

\begin{proposition}\label{ri-prop}
The operator $x_{st}(X)$, $X \in L_p(\F_t)$, is monotone, convex, it has the projection property, it is weak $\F_s$-homogeneous, and it is continuous from above.
\end{proposition}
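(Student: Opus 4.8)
The plan is to write the operator in the form $x_{st}(X) = F(X) - m_{st}$, where $F(X) := \essinf_{g \in {\cal C}^{p}_{st}} \rho_{st}(g - X)$ and $m_{st} := F(0) = \essinf_{g \in {\cal C}^{p}_{st}} \rho_{st}(g)$. By the standing Assumption \eqref{techass}, $m_{st}$ is a $P$-a.s. finite $\F_s$-measurable random variable, and crucially it does not depend on $X$. Hence every property of $x_{st}$ reduces to the corresponding property of $F$, and each property of $F$ will be inherited from the matching property of the single risk measure $\rho_{st}$, transported through the essinf by three elementary facts about essential infima of $\F_s$-measurable families: (a) the localization identity $\essinf_g (1_A Z_g) = 1_A \essinf_g Z_g$ for $A \in \F_s$; (b) $\essinf_g(\lambda Z_g + c) = \lambda \essinf_g Z_g + c$ when $\lambda \geq 0$ and $c$ is free of $g$; and (c) the min-max bound $\essinf_g Z_g \leq Z_{g_0}$ for any fixed $g_0$. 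The downward-directedness of $\{\rho_{st}(g-X): g \in {\cal C}^{p}_{st}\}$ noted after Definition \ref{defRIP} is available as a backup, but the arguments below only need (a)--(c).

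First the three routine properties. For \emph{monotonicity}, if $X' \geq X''$ then $g - X' \leq g - X''$ for every $g$, so monotonicity of $\rho_{st}$ gives $\rho_{st}(g-X') \geq \rho_{st}(g-X'')$, and taking the essinf over $g$ yields $F(X') \geq F(X'')$, hence $x_{st}(X') \geq x_{st}(X'')$. For \emph{convexity}, I first note ${\cal C}^{p}_{st}$ is convex, since $Y_{st}$ is linear in $\theta$ and $\Theta$ is convex, so $\lambda g' + (1-\lambda) g''$ is again dominated by some $Y_{st}(\theta)$. Then, choosing $g = \lambda g' + (1-\lambda)g''$ in the definition of $F(\lambda X' + (1-\lambda)X'')$ and applying convexity of $\rho_{st}$, I get the pointwise bound $F(\lambda X' + (1-\lambda)X'') \leq \lambda \rho_{st}(g'-X') + (1-\lambda)\rho_{st}(g''-X'')$ for all $g',g''$; taking the essinf over $g'$ and then over $g''$, using (b) to pull out each positive scalar and the $g$-free summand, gives $F(\lambda X' + (1-\lambda)X'') \leq \lambda F(X') + (1-\lambda)F(X'')$. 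For the \emph{projection property}, if $X \in L_p(\F_s)$ then $\F_s$-translation invariance gives $\rho_{st}(g-X) = \rho_{st}(g) + X$ for every $g$, whence $F(X) = m_{st} + X$ and $x_{st}(X) = X$.

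The step I expect to require the most care is \emph{weak $\F_s$-homogeneity}. Fix $A \in \F_s$. The core observation is a pair of per-$g$ identities obtained from the weak homogeneity \eqref{eqwh} of $\rho_{st}$: applying \eqref{eqwh} with $1_A$ and using $1_A 1_A = 1_A$ shows that both $1_A \rho_{st}(g-X)$ and $1_A \rho_{st}(g - 1_A X)$ equal $1_A \rho_{st}(1_A g - 1_A X)$, so $1_A \rho_{st}(g - 1_A X) = 1_A \rho_{st}(g - X)$; applying \eqref{eqwh} with $1_{A^c}$ and using $1_{A^c} 1_A X = 0$ shows $1_{A^c} \rho_{st}(g - 1_A X) = 1_{A^c} \rho_{st}(g)$. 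I then transport each identity through the essinf by the localization identity (a): the first gives $1_A F(1_A X) = 1_A F(X)$, and the second gives $1_{A^c} F(1_A X) = 1_{A^c} m_{st}$. Combining, $F(1_A X) = 1_A F(X) + 1_{A^c} m_{st}$, so $x_{st}(1_A X) = 1_A F(X) - 1_A m_{st} = 1_A x_{st}(X)$. The delicacy here is purely bookkeeping: one must split into $A$ and $A^c$, invoke \eqref{eqwh} on each piece, and be disciplined about which indicators survive before localizing the essinf.

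Finally, for \emph{continuity from above}, let $X_n \downarrow X$ $P$-a.s. Then $g - X_n \uparrow g - X$, so continuity from below of $\rho_{st}$ (automatic for $p<\infty$ by the Remark following the definition, and assumed for $p=\infty$) gives $\rho_{st}(g - X_n) \downarrow \rho_{st}(g - X)$ for each fixed $g$. Monotonicity of $x_{st}$ already yields that $F(X_n)$ is decreasing with $F(X_n) \geq F(X)$; for the reverse bound I use the min-max fact (c): for any fixed $g_0$, $\inf_n F(X_n) \leq \inf_n \rho_{st}(g_0 - X_n) = \rho_{st}(g_0 - X)$, and taking the essinf over $g_0$ gives $\inf_n F(X_n) \leq F(X)$. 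Hence $F(X_n) \downarrow F(X)$, and therefore $x_{st}(X_n) \downarrow x_{st}(X)$, which is continuity from above.
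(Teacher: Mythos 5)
Your proposal is correct and follows essentially the same route as the paper's proof: monotonicity and the projection property are derived from the corresponding properties of $\rho_{st}$, convexity from the convexity of ${\cal C}^{p}_{st}$ together with that of $\rho_{st}$, weak homogeneity from \eqref{eqwh} combined with localization of the essential infimum, and continuity from above from continuity from below of $\rho_{st}$ plus an exchange of infima. The only (welcome) refinement is in the weak $\F_s$-homogeneity step, where your explicit treatment of $A^c$ yields the full identity $x_{st}(1_A X)=1_A x_{st}(X)$ required of a price operator, whereas the paper's computation establishes $1_A x_{st}(X)=1_A x_{st}(1_A X)$ and leaves the passage to the full identity implicit.
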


\begin{proof}
The monotonicity of $x_{st}$ follows from the corresponding property of $\rho_{st}$. 
The projection property of 
$x_{st}$ follows from the ${\cal F}_s$-translation invariance of $\rho_{st}$ and $x_{st}(0)=0$.

For the convexity of $x_{st}$ we argue as follows. 
First of all note that ${\cal C}^p_{st}$ is convex, see the convexity of $\Theta$ (Definition \ref{strategies}) and Definition \ref{strategiesC}. 
Therefore, for all $g_1$ and $g_2$ in ${\cal C}^p_{st}$, $\lambda g_1+(1-\lambda) g_2 \in {\cal C}^p_{st}$.
Thus 
$$
\underset{g \in {\cal C}^p_{st}}\inf\rho_{st}(g-[\lambda X+(1-\lambda Y]) \leq \underset{g_1,g_2 \in {\cal C}^p_{st}}\inf\rho_{st}[\lambda (g_1-X)+(1-\lambda) (g_2-Y)].
$$
The convexity of $\rho_{st}$ yields that 
$$\underset{g \in {\cal C}^p_{st}}\inf\rho_{st}(g-[\lambda X+(1-\lambda Y]) \leq \underset{g_1,g_2 \in {\cal C}^p_{st}}\inf[ \lambda \rho_{st}(g_1-X)+(1-\lambda ) \rho_{st}(g_2-Y)].
$$
This gives the convexity of $x_{st}$.

The weak $\F_s$-homogeneity is justified as follows. Let $A \in {\cal F}_s$, then		
\begin{eqnarray}
1_A x_{st}(X)&=&1_A [\underset{g \in {\cal C}_{st}^{p}}{\essinf}  \rho_{st}(g-X) - 
\underset{g \in {\cal C}_{st}^{p}}{\essinf} \rho_{st}(g)]\nonumber\\
&=& \underset{g \in {\cal C}_{st}^{p}}{\essinf}  1_A\rho_{st}(g-X) - 
\underset{g \in {\cal C}_{st}^{p}}{\essinf} 1_A\rho_{st}(g)\nonumber\\
&=& \underset{g \in {\cal C}_{st}^{p}}{\essinf}   1_A\rho_{st}(g-1_AX) - 
\underset{g \in {\cal C}_{st}^{p}}{\essinf} 1_A\rho_{st}(g) \nonumber\\
&=&	1_Ax_{st}(1_AX),
\end{eqnarray}
where the third inequality comes from the weak  ${\cal F}_s$-homogeneity of $\rho_{st}$ as follows:
$$
1_A\rho_{st}(g-X)=1_A\rho_{st}(1_A(g-X))= 1_A\rho_{st}(1_A(g-1_AX))=1_A\rho_{st}(g-1_AX).
$$  
Finally, recall that for every $0 \leq s \leq t \leq T$, the risk measure $\rho_{st}$ is continuous from below.
Let $(X_n)_n$ in $ L_p(\F_t)$ be a sequence decreasing to $X\in L_p(\F_t)$.  
Then we have that $\rho_{st}(g-X)$ is the decreasing limit of
$\rho_{st}(g-X_n)$, for $n\to \infty$. 
Hence,
\[
\begin{split}
x_{st}(X)
&= \underset{g \in {\cal C}_{st}^{p}}{\essinf} \inf_n \rho_{st}(g-X_n) - 
\underset{g \in {\cal C}_{st}^{p}}{\essinf} \rho_{st}(g)\\
& = \inf _n \big[\underset{g \in {\cal C}_{st}^{p}}{\essinf} \rho_{st}(g-X_n) - 
\underset{g \in {\cal C}_{st}^{p}}{\essinf}  \rho_{st}(g)\big]\\
&=\inf_n x_{st}(X_n).
\end{split}
\]
The monotonicity of $x_{st}$ implies that $x_{st}(X)$ is the decreasing limit of  $x_{st}(X_n)$.
The continuity from above of $x_{st}$ is then proved.
 \end{proof}

\vspace{2mm}
In the last part of this subsection we study the Fatou property for the risk-indifference price operator 
$x_{st}$.
For this we shall distinguish the two cases when $p=\infty$ and $p\in [1,\infty)$.  

We have to recall  that
$$
L_\infty(\F_t) \subseteq Dom x_{st} \subseteq L_p(\F_t),\qquad p \geq 1,
$$
from Lemma \ref{Linf} item 2.

\begin{proposition}
\label{Fatouinf}
Let $p = \infty$. The risk-indifference price operator $x_{st}$ admits the following representation
\begin{equation}
\label{eqRCB}
 x_{st}(X)= \underset{Q  \ll P, Q_{|{\cal F}_s} =P}{\esssup}  (E_{Q}(X|{\cal F}_s)-\gamma_{st}(Q)), \qquad  X\in L_{\infty}({\cal F}_t),
 \end{equation}
where $\gamma_{st}(Q)$ is the minimal penalty: 
$$
\gamma_{st}(Q)= \underset{X \in L_{\infty}({\cal F}_t)}{\esssup}  \: (E_{Q}(X|{\cal F}_s)-x_{st}(X)). 
$$
In particular $x_{st}$ has the Fatou property and is continuous on $L_\infty(\F_t)$.
\end{proposition}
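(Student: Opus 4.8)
The plan is to recognise that, for $p=\infty$, the price operator $x_{st}$ is nothing but a finite conditional convex risk measure read with the opposite sign convention, and then to transport the dual representation already available in this section. First I would record the cash-invariance of $x_{st}$ on $L_\infty(\F_t)$, namely $x_{st}(X+f)=x_{st}(X)+f$ for every $f\in L_\infty(\F_s)$. This is a one-line computation from Definition \ref{defRIP}: the $\F_s$-translation invariance of $\rho_{st}$ gives $\rho_{st}(g-X-f)=\rho_{st}(g-X)+f$ inside each \essinf in \eqref{eqRIP}, so the additive term $f$ factors out of the infimum while the normalising term $\essinf_g\rho_{st}(g)$ is untouched. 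Together with Lemma \ref{Linf}(2), this shows $x_{st}\colon L_\infty(\F_t)\to L_\infty(\F_s)$ is finite-valued and cash-invariant.

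Next I would set $\rho(X):=x_{st}(-X)$ on $L_\infty(\F_t)$ and check that it is a finite conditional convex risk measure which is continuous from below. Indeed monotonicity in the risk-measure direction, convexity, and the translation rule $\rho(X+f)=\rho(X)-f$ follow at once from the corresponding properties of $x_{st}$ proved in Proposition \ref{ri-prop} and from the cash-invariance just established, reading everything through $X\mapsto -X$; and continuity from below of $\rho$ is precisely the continuity from above of $x_{st}$ contained in Proposition \ref{ri-prop}, since $X_n\uparrow X$ is the same as $-X_n\downarrow -X$. At this point $\rho$ falls within the scope of the dual representation used earlier in the section (the Proposition around \eqref{rm-representation0}, together with \cite{FS04}, \cite{DS2005}), which yields
\[
\rho(X)=\underset{\substack{Q\ll P:\; Q_{\vert\F_s}=P_{\vert\F_s}}}{\esssup}\big(E_Q[-X\mid\F_s]-\alpha(Q)\big),
\]
with $\alpha$ the minimal penalty of $\rho$. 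Substituting $-X$ for $X$ and using $x_{st}(X)=\rho(-X)$ gives exactly \eqref{eqRCB}; moreover the same substitution inside the \esssup defining the penalty identifies the minimal penalty of $\rho$ with $\gamma_{st}$, since as $X$ ranges over $L_\infty(\F_t)$ so does $-X$.

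Finally, the Fatou property and continuity would be deduced from the representation. For a dominated sequence $X_n\to X$ $P$-a.s. in $L_\infty(\F_t)$, each density $dQ/dP$ lies in $L_1(P)$ and the dominating bound is in $L_1(Q)$, so conditional dominated convergence gives $E_Q[X_n\mid\F_s]\to E_Q[X\mid\F_s]$; hence $\liminf_n x_{st}(X_n)\geq E_Q[X\mid\F_s]-\gamma_{st}(Q)$ for every admissible $Q$, and taking the \esssup over $Q$ yields $\liminf_n x_{st}(X_n)\geq x_{st}(X)$, which is the Fatou property. Combined with the continuity from above already in Proposition \ref{ri-prop}, this gives continuity of $x_{st}$ on $L_\infty(\F_t)$. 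I expect the only genuinely delicate point to be the invocation of the conditional robust representation in exactly the right form, i.e. ensuring the penalty is finite precisely on measures with $Q_{\vert\F_s}=P_{\vert\F_s}$ and that continuity from below legitimises taking it to be the minimal penalty; but this is the very content already worked out for the $\rho_{st}$ in \eqref{rm-representation0}, so it transfers \emph{verbatim} to $\rho$.
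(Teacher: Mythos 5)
Your proposal is correct and takes essentially the same route as the paper: the paper's own (very terse) proof likewise treats the conditional risk measure $x_{st}(-X)$, uses the continuity from above of $x_{st}$ established in Proposition \ref{ri-prop}, and invokes the conditional dual representation (as in \cite{DS2005} or \cite{BN-preprint}), with the Fatou property and continuity then read off from the representation \eqref{eqRCB}. Your write-up simply makes explicit the details the paper delegates to the references, namely the cash-invariance of $x_{st}$, the identification of the minimal penalty of $x_{st}(-\cdot)$ with $\gamma_{st}$, and the conditional dominated-convergence argument yielding the Fatou property.
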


\begin{proof}
The representation follows from the continuity from above, see Proposition \ref{ri-prop}. Indeed we refer to the dual representation for conditional risk measures as in \cite{DS2005} or in \cite{BN-preprint}. This dual 
representation is written for the conditional risk measure $x_{st}(-X)$. The second assertion  follows from the representation (\ref{eqRCB}) itself and \cite{DS2005}.
\end{proof}

\begin{remark}\label{Rdomain}
If the operator $x_{st}$ was considered on a Fr\'echet lattice $L_t \supseteq L_\infty(\F_t)$, then one could obtain 
the Fatou property by applying the extension of the Namioka-Klee theorem, see \cite{BF2009}, to the functional $E[x_{st}(X)]$, $X\in L_t$. 

We have to remark that $Dom \, x_{st}$ is not a Fr\'echet lattice, in fact it is not complete and also in general we have $\Theta \subseteq \Xi$ hence it may not even be a vector space.
\end{remark}

As Remark \ref{Rdomain} shows the study of the Fatou property (and also time-consistency as we shall see later) is delicate and the major issues are related to the domain of the operators involved.
This is again noticed in e.g. \cite{FM} in the context of forward utilities.

\bigskip
In view of the remark above, we here propose a different approach to study the Fatou property for $p\in [1,\infty)$.
For all $s,t \in [0,T]: \: s \leq t$, we shall introduce an extension of the operator $x_{st}$ by constructing an 
adequate extension of 
$\rho_{st}$.  
For this, when $p\in [1,\infty)$, we assume that $\rho_{0T}$ is dominated and sensitive.

\bigskip
We introduce the seminorm
$$
c(X) := \sup_{Q\in \mathcal{Q}} E_Q (\vert X \vert ),
$$
for all $\F_T$ measurable random variables $X$
 and
$$\mathcal{Q} :=\{Q \sim P:\; \alpha_{0T}(Q)<\infty\}$$ 
where $\alpha_{0T}$ is the minimal penalty of $\rho_{0T}$ (see Proposition \ref{prop1}).
We observe that
$$
c(X) =0 \quad \iff \quad X=0 \; P-a.s.
$$ 
This follows directly from $Q \sim P$ in the definition of $\mathcal{Q}$.
Then $c$ induces a relationship of equivalence among random variables.

\vspace{1mm}
\begin{defn}\label{L^c}
For all $t$, we define $\mathcal{L}^c_t$ to be the completion, with respect to the seminorm $c$, of the set of essentially bounded $\F_t$-measurable random variables.
We define
$ L^c_t := \mathcal{L}^c_t \big/ \sim$.
\end{defn}

The space $L^c_t$ is a Banach space with norm $c$.

\begin{lem}\label{Lemma L^c}
For all $t$, the following relationship holds for all $Q \in \mathcal{Q}$:
$$
L_p(\F_t, P) \subseteq L^c_t \subseteq L_1(\F_t, Q).
$$
\end{lem}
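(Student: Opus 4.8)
The plan is to establish the two inclusions $L_p(\F_t,P) \subseteq L^c_t$ and $L^c_t \subseteq L_1(\F_t,Q)$ separately, by comparing the seminorm $c$ with the $L_p(P)$-norm on one side and with the $L_1(Q)$-norm on the other. The key observation driving both inclusions is the domination hypothesis on $\rho_{0T}$: by Proposition \ref{prop1}, domination gives $\mathcal{Q} \subseteq \mathcal{B}^K$, so every $Q\in\mathcal{Q}$ has a density $dQ/dP \in L_q(\F_T)$ with $\Vert dQ/dP\Vert_q \leq K$, where $q=p(p-1)^{-1}$ is the conjugate exponent.

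First I would prove $L_p(\F_t,P) \subseteq L^c_t$ by showing the seminorm $c$ is dominated by a constant times the $L_p(P)$-norm. For any $\F_T$-measurable $X$ and any $Q\in\mathcal{Q}$, H\"older's inequality gives
\begin{equation*}
E_Q(\vert X\vert) = E_P\Big(\frac{dQ}{dP}\vert X\vert\Big) \leq \Big\Vert \frac{dQ}{dP}\Big\Vert_q \,\Vert X\Vert_p \leq K\,\Vert X\Vert_p.
\end{equation*}
Taking the supremum over $Q\in\mathcal{Q}$ yields $c(X) \leq K\Vert X\Vert_p$. Hence the identity map from $(L_\infty(\F_t),\Vert\cdot\Vert_p)$ into $(L_\infty(\F_t),c)$ is continuous, and since $L_\infty(\F_t)$ is dense in $L_p(\F_t,P)$ for $p<\infty$, any $X\in L_p(\F_t,P)$ is a $\Vert\cdot\Vert_p$-limit of bounded random variables, which by the inequality above is therefore a $c$-Cauchy sequence; its $c$-limit lies in $\mathcal{L}^c_t$ by definition and represents $X$ in $L^c_t$. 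One must check this is well-defined, i.e. that two $\Vert\cdot\Vert_p$-equivalent representatives give the same $c$-class, which is immediate from $c(X)\leq K\Vert X\Vert_p$.

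For the second inclusion $L^c_t \subseteq L_1(\F_t,Q)$, I would fix $Q\in\mathcal{Q}$ and observe the trivial pointwise bound $E_Q(\vert X\vert) \leq c(X)$, which says $\Vert X\Vert_{L_1(Q)} \leq c(X)$ for every bounded $X$. Thus the identity map from $(L_\infty(\F_t),c)$ into $(L_\infty(\F_t),\Vert\cdot\Vert_{L_1(Q)})$ is $1$-Lipschitz. Since $L_1(\F_t,Q)$ is complete, this map extends continuously to the completion $\mathcal{L}^c_t$, giving a canonical map $L^c_t \to L_1(\F_t,Q)$; one then verifies this map is injective so that the inclusion is genuine. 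Injectivity follows because $Q\sim P$: if a $c$-class has a representative that is a $c$-limit of bounded $X_n$ with $X_n\to 0$ in $L_1(Q)$, then since $c(X_n)\to 0$ forces $X_n\to 0$ in $L_1(Q')$ for all $Q'\in\mathcal{Q}$ and in particular $Q\sim P$ detects a.s.-vanishing, the limit is the zero class.

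The main obstacle I anticipate is the careful bookkeeping in the second inclusion: the elements of $L^c_t$ are abstract completion classes, not a priori random variables, so identifying $L^c_t$ with an honest subspace of $L_1(\F_t,Q)$ requires checking that the extension map is injective, i.e. that distinct $c$-classes map to distinct $L_1(Q)$-elements. This hinges precisely on $Q\sim P$ and the fact (noted after the definition of $c$) that $c(X)=0$ iff $X=0$ $P$-a.s., which guarantees that $c$-limits can be consistently realized as genuine random variables modulo $P$-null sets. Once injectivity is secured, both inclusions are set-theoretic containments of the corresponding spaces of (equivalence classes of) random variables, and the norm comparisons $\Vert\cdot\Vert_{L_1(Q)} \leq c \leq K\Vert\cdot\Vert_p$ make the inclusions continuous, completing the proof.
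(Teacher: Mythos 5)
Your proof is correct and is essentially the paper's own: the paper's entire proof consists of the two-sided bound $E_Q(\vert X\vert) \leq c(X) \leq K\Vert X\Vert_p$ for all $Q\in\mathcal{Q}$, obtained exactly as you obtain it, from Proposition \ref{prop1} (domination gives $\mathcal{Q}\subseteq\mathcal{B}^K$, so every density lies in $L_q$ with norm at most $K$) together with H\"older's inequality. The completion bookkeeping you add is a sound elaboration of what the paper leaves implicit; the only blemish is that in your injectivity step the implication you actually state ($c(X_n)\to 0$ forces $X_n\to 0$ in $L_1(Q')$) is the trivial direction, whereas what is needed --- that $X_n\to 0$ in $L_1(Q)$ together with $c$-Cauchyness forces $c(X_n)\to 0$ --- follows by an $\varepsilon$-argument from the Cauchy bound $c(X_n-X_m)<\varepsilon$ combined with almost-sure subsequence extraction, which transfers across the mutually equivalent measures in $\mathcal{Q}$.
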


\begin{proof}
The relationship is directly proved from Proposition \ref{prop1}, in fact $E_Q(\vert X \vert) \leq c(X) \leq K \Vert X \Vert_p$, for all $Q\in \mathcal{Q}$.
\end{proof}

\vspace{2mm}
In the sequel, we extend the risk measure $\rho_{st}$ to obtain the map:
$$
\tilde \rho_{st}: L^c_t \; \longrightarrow \; L^c_s.
$$
With this we can extend the corresponding risk-indifference price $x_{st}$ as a map:
$$
x_{st}: L^c_t \; \longrightarrow \; L^c_s.
$$
This operator allows us to study the Fatou property. 
Also we shall  see that the extensions above are instrumental in the study of time-consistency for the price system.

\vspace{2mm}
In the literature we find other extensions of risk-measures to a larger domain of the type $L_t^ c$. See e.g. \cite{Ref1}, \cite{Ref2}. We observe that these are related to risk-measures with values in $\mathbb{R}$ or $(-\infty,\infty]$.
In this paper, we deal with the extension of the \emph{whole} family of risk measures $(\rho_{st})_{s,t}$ as operators: $\rho_{st}: L_p(\F_t) \longrightarrow L_p(\F_s)$.
We perform the extension of all these operators substantially under the assumptions on the risk measure $\rho_{0T}$ to be dominated and sensitive and the use of the strong time-consistency of the original family $(\rho_{st})_{s,t}$.
We stress that the strong time-consistency is crucial to obtain the extensions $\tilde\rho_{st}$, when $s>0$.
As a result we obtain that the extended family $(\tilde\rho_{st})_{s,t}$ is also strong time-consistent.
In turn this is important to obtain the time-consistency of the risk-indifference prices.
We shall detail these arguments in the sequel.

\subsubsection{Construction of the extended fully-dynamic risk-measure}
The construction of the extension of $\rho_{st}$ is engaging and it requires several steps.
First of all we have the following lemmas.  

\begin{lem}
\label{L1}
For fixed $t \in [0,T]$, consider a convex risk measure $\psi_{tT}: L_\infty(\F_T) \longrightarrow L_\infty(\F_t)$ continuous from below.
Let $\beta_{tT}$ be its minimal penalty. 
Assume that there exists a probability measure $Q \sim P$ such that $E_{Q} (\beta_{tT}(Q)) < \infty$. 
Then the following representation holds:
\begin{equation}
\label{R_t}
\psi_{tT}(X) = \underset{R\sim P ,\, \beta_{tT}(R) \in L_\infty(\F_t)}  \esssup ( E_R(-X\vert \F_t) - \beta_{tT} (R) ), \quad X \in L_\infty (\F_T).
\end{equation}
\end{lem}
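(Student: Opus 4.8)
The plan is to derive the restricted representation \eqref{R_t} from the general robust representation of conditional convex risk measures continuous from below (as in \cite{DS2005} and \cite{BN-preprint}),
$$
\psi_{tT}(X) = \underset{R \ll P,\; R_{|\F_t} = P_{|\F_t}}{\essmax}\big(E_R(-X\vert \F_t) - \beta_{tT}(R)\big), \qquad X \in L_\infty(\F_T),
$$
in which the essential maximum is \emph{attained}. Every $R \sim P$ with $\beta_{tT}(R)\in L_\infty(\F_t)$ belongs to the family above (absolute continuity is clear, and finiteness of the penalty forces $R_{|\F_t}=P_{|\F_t}$ via $\F_t$-translation invariance), so the family occurring in \eqref{R_t} is a subfamily and the inequality ``$\le$'' in \eqref{R_t} is immediate. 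The whole content is the reverse inequality; for this it suffices to produce, for each $X$, a sequence $R^{(k)}\sim P$ with $\beta_{tT}(R^{(k)})\in L_\infty(\F_t)$ whose associated values converge to $\psi_{tT}(X)$ $P$\as.

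I would first fix an argmax $R_X$ in the representation above, so that $\beta_{tT}(R_X)=E_{R_X}(-X\vert\F_t)-\psi_{tT}(X)$. Since $X\in L_\infty(\F_T)$ and $R_X$ coincides with $P$ on $\F_t$ (giving $\vert E_{R_X}(-X\vert\F_t)\vert\le \Vert X\Vert_\infty$), while $\psi_{tT}(X)\in L_\infty(\F_t)$, the argmax automatically has \emph{bounded} penalty $\beta_{tT}(R_X)\in L_\infty(\F_t)$. The only obstruction to using $R_X$ directly in \eqref{R_t} is thus that $R_X$ is a priori merely absolutely continuous, not equivalent, to $P$. Restoring equivalence without destroying the boundedness of the penalty is the main obstacle, and mixing $R_X$ with $Q$ directly fails, because $\beta_{tT}(Q)$ is only finite $P$\as and need not be bounded.

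The device I would use is an $\F_t$-measurable weight tuned to the blow-up of $\beta_{tT}(Q)$. For $k\ge 1$ put the $\F_t$-measurable coefficient $1-\Lambda_k:=\big(k(1+\beta_{tT}(Q))\big)^{-1}\in(0,1)$ and set $R^{(k)}:=\Lambda_k R_X+(1-\Lambda_k)Q$. Then $R^{(k)}\sim P$ since $Q\sim P$ and $1-\Lambda_k>0$ $P$\as; moreover, because $\Lambda_k$ is $\F_t$-measurable and both $R_X$ and $Q$ coincide with $P$ on $\F_t$, the density of $R^{(k)}$ has $\F_t$-conditional expectation $1$, so $R^{(k)}_{|\F_t}=P_{|\F_t}$ and the conditional expectation splits linearly: $E_{R^{(k)}}(-X\vert\F_t)=\Lambda_k E_{R_X}(-X\vert\F_t)+(1-\Lambda_k)E_Q(-X\vert\F_t)$. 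The key estimate is the conditional convexity of the minimal penalty under $\F_t$-measurable weights, $\beta_{tT}(R^{(k)})\le\Lambda_k\beta_{tT}(R_X)+(1-\Lambda_k)\beta_{tT}(Q)$, which follows at once from the definition of $\beta_{tT}$ as a minimal penalty and the linearity of $E(\cdot\vert\F_t)$ in the density. By the choice of $\Lambda_k$ one has $(1-\Lambda_k)\beta_{tT}(Q)\le k^{-1}$, hence $\beta_{tT}(R^{(k)})\le\Vert\beta_{tT}(R_X)\Vert_\infty+1$ is bounded, i.e. $\beta_{tT}(R^{(k)})\in L_\infty(\F_t)$.

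Combining the linear splitting with this penalty bound yields
$$
E_{R^{(k)}}(-X\vert\F_t)-\beta_{tT}(R^{(k)})\;\ge\;\Lambda_k\,\psi_{tT}(X)+(1-\Lambda_k)\big(E_Q(-X\vert\F_t)-\beta_{tT}(Q)\big).
$$
Since $\Lambda_k\uparrow 1$ $P$\as and both $(1-\Lambda_k)E_Q(-X\vert\F_t)\to 0$ and $(1-\Lambda_k)\beta_{tT}(Q)\to 0$ $P$\as, the right-hand side converges to $\psi_{tT}(X)$ $P$\as. Therefore the essential supremum over the restricted family dominates $\psi_{tT}(X)$, which together with the trivial inequality gives \eqref{R_t}. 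I expect the only delicate points to be the attainment of the argmax (which relies on continuity from below) and the careful verification of the conditional-convexity inequality for $\F_t$-measurable weights.
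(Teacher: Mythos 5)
Your core strategy is sound and genuinely different from the paper's. The paper never looks at an argmax: it first invokes Theorem 3.1 of \cite{KS} (the sensitivity machinery) to produce a single measure $\tilde Q\sim P$ whose penalty is not merely finite but \emph{bounded}, $\beta_{tT}(\tilde Q)\in L_\infty(\F_t)$, with $\tilde Q_{\vert\F_t}=P$; it then takes an arbitrary $R\sim P$, $R_{\vert\F_t}=P$, from the equivalent-measure representation and \emph{pastes} it with $\tilde Q$ along the $\F_t$-measurable set $A$ where $R$'s value exceeds $\tilde Q$'s, i.e. $d\tilde R/dP=1_A\,dR/dP+1_{A^c}\,d\tilde Q/dP$; by the very definition of $A$ the pasted measure has penalty bounded by $\beta_{tT}(\tilde Q)+2\Vert X\Vert_\infty$ on $A$, remains equivalent, and improves the value. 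You instead start from the attained essmax over absolutely continuous measures, observe that the argmax $R_X$ \emph{automatically} has bounded penalty, and restore equivalence by conditional convex mixing with the hypothesis measure, with $\F_t$-measurable weights tuned to the blow-up of $\beta_{tT}(Q)$. Your route is more self-contained: it needs no part of \cite{KS} Theorem 3.1, only the essmax representation (the paper's own Proposition 2.4 applied to $\psi_{tT}$) and the raw hypothesis on $Q$. The paper's pasting, in exchange, needs neither attainment of the max nor a limiting sequence, and dominates the value of every equivalent $R$ at once.

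Two steps, however, need repair. First, your claim that finiteness of the penalty forces $R_{\vert\F_t}=P_{\vert\F_t}$ ``via $\F_t$-translation invariance'' is false: the conditional penalty $\beta_{tT}(R)$ depends on $R$ only through $E_R(\cdot\vert\F_t)$, i.e. through the normalized conditional density $\frac{dR/dP}{E(dR/dP\vert\F_t)}$, so it is blind to $R_{\vert\F_t}$ (testing against $\F_t$-measurable $X=\lambda 1_B$ yields identically $-\psi_{tT}(0)$, hence no constraint; the analogous argument only works for \emph{static} penalties). This is harmless for your easy inequality, which follows directly from the definition of the minimal penalty, but it is essential in the mixing step: the hypothesis gives $Q\sim P$ with $E_Q(\beta_{tT}(Q))<\infty$ and says nothing about $Q_{\vert\F_t}$. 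If $Q_{\vert\F_t}\neq P$, then $dR^{(k)}/dP$ does not have $\F_t$-conditional expectation $1$, $R^{(k)}$ as written need not even be a probability measure, and the linear splitting and the conditional convexity of the penalty both fail. The repair is the standard normalization the paper itself performs in its Step 1: replace $Q$ by $Q'$ with $dQ'/dP=\frac{dQ/dP}{E(dQ/dP\vert\F_t)}$; then $Q'\sim P$, $Q'_{\vert\F_t}=P$, $E_{Q'}(\cdot\vert\F_t)=E_Q(\cdot\vert\F_t)$, hence $\beta_{tT}(Q')=\beta_{tT}(Q)$, and all your estimates survive. Second, your weight $1-\Lambda_k=\big(k(1+\beta_{tT}(Q))\big)^{-1}$ presupposes $1+\beta_{tT}(Q)>0$, which is not guaranteed: $\psi_{tT}$ is not assumed normalised, so $\beta_{tT}(Q)\geq-\psi_{tT}(0)$ may well take values below $-1$. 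Take $1-\Lambda_k=\big(k(1+\vert\beta_{tT}(Q)\vert)\big)^{-1}$ instead; then $1-\Lambda_k\in(0,1/k]$ and $(1-\Lambda_k)\vert\beta_{tT}(Q)\vert\leq 1/k$, and the rest of your argument goes through verbatim. With these two repairs your proof is complete.
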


\begin{proof}
The proof is organised in steps.

\underline{Step 1}.
Consider $\Phi_t (X) := - \psi_{tT}(X)$, $X \in L_{\infty}(\F_T),\;$  and $\alpha_t(R) := - \beta_{tT}(R)$, $R \sim P$. 
Then the mapping $\Phi_t$ satisfies the 
property {\bf I)} in Theorem 3.1 of \cite{KS}. This result presents a number of equivalent statements and then  property 
{\bf II)} is true.
From the proof that {\bf II)} implies {\bf I)} (see Appendix in \cite{KS}) we have that for all $\varepsilon >0$ there is 
a probability measure $\tilde Q \sim P$ such that
$\alpha_t(\tilde Q) + \varepsilon \geq - \Phi_t(0)$.
On the other hand, we have that
$$ -\Phi_t(0)=\psi_{tT}(0) \geq -\beta_{tT}(\tilde Q)=\alpha_t(\tilde Q).$$
This proves that there exists $\tilde Q \sim P$ such that
 $$
 \beta_{tT}(\tilde Q) = - \alpha_t(\tilde Q)\; \text{belongs to} \;L_\infty(\F_t).
 $$ 
 If follows from the property of the minimal penalty that we can consider $\tilde Q_{\vert \F_t} = P$.
 
 \underline{Step 2}.
Let $X \in L_\infty(\F_t)$. For all $R \sim P$ such that $ R_{\vert \F_t} = P$ let 
$$
A = \big\{ E_R(-X\vert \F_t) - \beta_{tT}(R) > E_{\tilde Q} (-X\vert \F_t) - \beta_{tT}(\tilde Q) \big\}.
$$
Let $\tilde R$ be defined by
$$
\frac{d\tilde R}{dP} = 1_A \frac{d R}{dP} + 1_{A^c} \frac{d\tilde Q}{dP}.
$$
Then we have that
$\tilde R \sim P$, $\tilde R_{\vert \F_t} = P$, and $\beta_{tT} (\tilde R) = 1_A \beta_{tT} (R) + 1_{A^c} \beta_{tT} (\tilde Q)$. 
From the definition of the event $A$, we can see that $\beta_{tT}(R) 1_A \leq \beta_{tT}(\tilde Q) 1_A + 2 \Vert X \Vert_\infty 1_A$.
On the other hand we have $\beta_{tT}(R) \geq - \psi_{tT}(0)$.
Thus we conclude that $\beta_{tT}(\tilde R) \in L_\infty(\F_t)$ and
$E_{\tilde R}\big( X\vert \F_t\big) - \beta_{tT}(\tilde R) \geq E_{ R}\big( X\vert \F_t\big) - \beta_{tT}( R)$.
By this we have obtained representation \eqref{R_t}.
\end{proof}

\begin{lem}
\label{L2}
Set $p\in [1,\infty)$.
Let $(\rho_{st})_{s,t}$ be a  strong time-consistent fully-dynamic risk measure such that $\rho_{0T}$ is dominated and sensitive.
For all $Q \in \mathcal{Q}$, for all  $s \in [0,T]$, we have that the minimal penalties $(\alpha_{st})_{s,t}$ satisfy 
the following:
$$
\alpha_{0s}(Q) \in \R, \;\text{and}\;
\quad E_Q(\alpha_{sT}(Q)) \in\R.
$$
\end{lem}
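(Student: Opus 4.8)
The plan is to reduce everything to the cocycle characterisation of strong time-consistency for the minimal penalties, which by Remark~\ref{charcoc} (Theorem~1 in \cite{BN02}) states that for all $r\leq s\leq t$ and all $Q\ll P$,
\begin{equation*}
\alpha_{rt}(Q)=\alpha_{rs}(Q)+E_Q(\alpha_{st}(Q)\,|\,\F_r).
\end{equation*}
First I would specialise this identity to $r=0$ and $t=T$. Since $\F_0$ is trivial, $\alpha_{0s}(Q)$ and $\alpha_{0T}(Q)$ are deterministic elements of $(-\infty,+\infty]$ and the conditional expectation given $\F_0$ is an ordinary expectation, so the cocycle relation becomes
\begin{equation*}
\alpha_{0T}(Q)=\alpha_{0s}(Q)+E_Q(\alpha_{sT}(Q)).
\end{equation*}
By the very definition of $\mathcal{Q}$ the left-hand side is finite for every $Q\in\mathcal{Q}$.

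The next step is to show that neither summand on the right can equal $-\infty$; then, since their sum is finite, each of them must be a real number, which is exactly the assertion. For the lower bounds I would evaluate the minimal penalty \eqref{minpen} at $X=0$, giving $\alpha_{st}(Q)\geq-\rho_{st}(0)$ for every $s\leq t$. In particular $\alpha_{0s}(Q)\geq-\rho_{0s}(0)$, and $\rho_{0s}(0)$ is a real number because $\F_0$ is trivial, so $\alpha_{0s}(Q)>-\infty$. Likewise $\alpha_{sT}(Q)\geq-\rho_{sT}(0)$ with $\rho_{sT}(0)\in L_p(\F_s)$.

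It remains to control $E_Q(\alpha_{sT}(Q))$ from below, and this is where domination enters. Since $\rho_{0T}$ is dominated and sensitive, Proposition~\ref{prop1} gives $\mathcal{Q}\subseteq\mathcal{B}^K$, so every $Q\in\mathcal{Q}$ has density $\frac{dQ}{dP}\in L_q(\F_T)$ with $\|\frac{dQ}{dP}\|_q\leq K$, where $q$ is the conjugate exponent of $p$. By H\"older's inequality $E_Q(|\rho_{sT}(0)|)\leq K\,\|\rho_{sT}(0)\|_p<\infty$, whence $E_Q(\alpha_{sT}(Q))\geq-E_Q(\rho_{sT}(0))>-\infty$. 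Combining this with $\alpha_{0s}(Q)>-\infty$ and $\alpha_{0T}(Q)<\infty$ in the displayed decomposition forces both $\alpha_{0s}(Q)$ and $E_Q(\alpha_{sT}(Q))$ to be finite.

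I expect the only delicate point to be the bookkeeping that guarantees the cocycle identity is free of any $\infty-\infty$ ambiguity; the lower bounds $\alpha_{st}(Q)\geq-\rho_{st}(0)$ together with the $Q$-integrability of $\rho_{sT}(0)$ obtained from domination are precisely what make every term well-defined in $(-\infty,+\infty]$, so the additive splitting is legitimate and the conclusion follows.
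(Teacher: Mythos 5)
Your proof is correct and follows essentially the same route as the paper's: the cocycle condition $\alpha_{0T}(Q)=\alpha_{0s}(Q)+E_Q(\alpha_{sT}(Q))$ from Remark \ref{charcoc}, the lower bounds $\alpha_{0s}(Q)\geq-\rho_{0s}(0)$ and $\alpha_{sT}(Q)\geq-\rho_{sT}(0)$ from the definition of the minimal penalty, and the $Q$-integrability of $\rho_{sT}(0)$ via Proposition \ref{prop1} and H\"older's inequality, concluding from $\alpha_{0T}(Q)<\infty$. Your explicit bookkeeping ruling out $\infty-\infty$ ambiguities is a welcome clarification of a point the paper leaves implicit, but it is the same argument.
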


\begin{proof}
The minimal penalties $(\alpha_{st})_{s,t}$ satisfy the cocycle condition:
\begin{equation}
\label{cc}
\alpha_{0T}(Q) = \alpha_{0s}(Q) + E_Q(\alpha_{sT}(Q)), \quad Q \in \mathcal{Q},
\end{equation}
see \cite{BN02} (Remark \ref{charcoc}).
Furthermore, it follows from the definition of  minimal penalty that
\[\begin{split}
& \alpha_{0s}(Q) \geq - \rho_{0s}(0)\\
& \alpha_{sT}(Q) \geq - \rho_{sT}(0).
\end{split}\]
By hypothesis we have that $\rho_{sT}(0) \in L_p(\F_s)$ and, from Proposition \ref{prop1}, we have that 
$\frac{dQ}{dP} \in L_q(\F_T)$, with $q=p(p-1)^{-1} $.
Thus $E_Q(-\rho_{sT}(0) ) \in \mathbb{R}$. 
The result  follows from  $\alpha_{0T}(Q) < \infty$ and the cocycle condition \eqref{cc}.
\end{proof}

\bigskip
We introduce the following sets of probability measures for all $s\leq t$ on $(\Omega,\F_t)$:
$$
\mathcal{P}_{st} := \Big\{ R \sim P\,: \; R_{\vert \F_s} = P \textrm{ and } \alpha_{st}(R) \in L_p(\F_s) \Big\}
$$
and
$$
\widetilde{\mathcal{P}}_{st} := \Big\{ R \sim P\,: \; R_{\vert \F_s} = P \textrm{ and } 
\sup_{Q\in \mathcal{Q}} E_Q(\alpha_{st}(R)) < \infty \Big\}
$$
We also remark immediately that 
\begin{equation}
 \label{eqNUM}
\Pp  \subseteq \wPp = \Big\{ R \sim P\,: \; R_{\vert \F_s} = P \textrm{ and } 
\sup_{Q\in \mathcal{Q}} E_Q(\vert \alpha_{st}(R) \vert) < \infty \Big\}  .
\end{equation}

In fact, for $R\in \wPp$ ,we have that $\alpha_{st}(R) \geq - \rho_{st}(0)$.
Then $\vert \alpha_{st}(R) \vert \leq \alpha_{st}(R) + 2 \vert \rho_{st}(0) \vert$. We remind that 
$ \rho_{st}(0) \in L_p(\F_t)$ and 
$||\frac{dQ}{dP}||_ {L_q} \leq K$ 
for all $Q \in {\cal Q}$, see Proposition \ref{prop1}.
Then we conclude that both relations hold.

\begin{proposition}
\label{Prop2rep}
Set $p\in [1,\infty)$.
Let $(\rho_{st})_{s,t}$ be a strong time-consistent fully-dynamic risk measure such that $\rho_{0T}$ is dominated and  sensitive.
Then we have that
\begin{enumerate}
\item
for all $t\in [0,T]$, the risk measure $\rho_{0t}$ is dominated and  sensitive,
\item
for all  $t\in [0,T]: \, s \leq t$, the following representation holds:
\begin{equation}
\label{Rep2}
\rho_{st}(X) = \underset{R\in \Pp}\esssup \big( E_R(-X\vert \F_s ) - \alpha_{st}(R) \big), \quad X\in L_p(\F_t),
\end{equation}
\item
for all $Q\in \mathcal{Q}$ and $R\in \wPp$, there exists $h \in L_q(\F_t):$ $\Vert h \Vert_q \leq K$, with $q=p(p-1)^{-1}$, 
such that
$$
E_Q( E_R (X\vert \F_s)) = E(hX), \quad X \in L_p(\F_t),
$$
\item
for all  $t\in [0,T]: \, s \leq t$, the following representation holds:
\begin{equation}
\label{Rep3}
\rho_{st}(X) = \underset{R\in \wPp}\esssup \big( E_R(-X\vert \F_s ) - \alpha_{st}(R) \big), \quad X\in L_p(\F_t).
\end{equation}
\end{enumerate}
\end{proposition}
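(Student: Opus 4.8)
The plan is to prove the four assertions in the stated order, using strong time-consistency---in the form of the cocycle identity for the minimal penalties, see Remark~\ref{charcoc} and \eqref{cc}---as the main engine, together with the domination and sensitivity of $\rho_{0T}$. For (1), I would exploit that for an $\F_t$-measurable $X$ the $\F_t$-translation invariance gives $\rho_{tT}(X)=\rho_{tT}(0)-X$, so strong time-consistency yields $\rho_{0T}(X)=\rho_{0t}\big(X-\rho_{tT}(0)\big)$ for every $X\in L_p(\F_t)$. Substituting $X=Y+\rho_{tT}(0)$ with $Y\in L_p(\F_t)$ and using $\rho_{tT}(0)\in L_p(\F_t)$, the domination bound $\rho_{0T}(\,\cdot\,)\le K\Vert\cdot\Vert_p+C$ from Proposition~\ref{propRepBNK} transfers to $\rho_{0t}(Y)\le K\Vert Y\Vert_p+C'$; hence $\rho_{0t}$ is dominated with the \emph{same} constant $K$. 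Sensitivity of $\rho_{0t}$ is more direct: by Lemma~\ref{L2}, for every $Q\in\mathcal{Q}$ one has $\alpha_{0t}(Q)\in\R$, and since such $Q\sim P$, its restriction to $\F_t$ witnesses sensitivity of $\rho_{0t}$.

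For (2) the strategy is to first obtain \eqref{Rep2} on $L_\infty(\F_t)$ and then push it to $L_p(\F_t)$. Applying Lemma~\ref{L1} to $\rho_{st}$ restricted to $L_\infty$ (with the roles of $(t,T)$ played by $(s,t)$) requires a measure $Q\sim P$ with $E_Q(\alpha_{st}(Q))<\infty$; this is produced from the sensitivity of $\rho_{0t}$ just established, combined with the cocycle $\alpha_{0t}(Q)=\alpha_{0s}(Q)+E_Q(\alpha_{st}(Q))$ and $\alpha_{0s}(Q)\ge-\rho_{0s}(0)>-\infty$. Lemma~\ref{L1} then gives \eqref{Rep2} for $X\in L_\infty(\F_t)$, the representing measures $R$ being equivalent to $P$ with $R_{\vert\F_s}=P$ and $\alpha_{st}(R)\in L_\infty(\F_s)\subseteq L_p(\F_s)$, hence in $\Pp$. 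To extend to $L_p$ I would first record the inclusion $\Pp\subseteq\mathcal{B}^K$: for $R\in\Pp$ the cocycle gives $\alpha_{0t}(R)=\alpha_{0s}(P)+E_P(\alpha_{st}(R))<\infty$ (using $R_{\vert\F_s}=P$ and $\alpha_{st}(R)\in L_p(\F_s)\subseteq L_1$), so $R$ lies in the sensitivity set of the dominated $\rho_{0t}$ and Proposition~\ref{prop1} forces $\Vert dR/dP\Vert_q\le K$; in particular each functional $X\mapsto E_R(-X\vert\F_s)-\alpha_{st}(R)$ is $L_p$-continuous. One inequality of \eqref{Rep2} on $L_p$ is free from \eqref{rm-representation0}, since $\Pp$ is contained in the measures appearing there. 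For the reverse inequality I would truncate $X_n:=(X\wedge n)\vee(-n)\in L_\infty$ and use $-X_n\le -X+(|X|-n)^+$ to get $E_R(-X_n\vert\F_s)-\alpha_{st}(R)\le \esssup_{R'\in\Pp}\big(E_{R'}(-X\vert\F_s)-\alpha_{st}(R')\big)+E_R\big((|X|-n)^+\vert\F_s\big)$, and control the tail uniformly: the family $\{E_R(\,\cdot\,\vert\F_s):R\in\Pp\}$ is upward directed by pasting along $\F_s$-sets, so $E\big(\esssup_{R\in\Pp}E_R((|X|-n)^+\vert\F_s)\big)=\sup_{R\in\Pp}E_R((|X|-n)^+)\le K\Vert(|X|-n)^+\Vert_p\to0$. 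Letting $n\to\infty$ and using continuity of $\rho_{st}$ gives \eqref{Rep2} on all of $L_p(\F_t)$. I expect this $L_p$-extension, and in particular the uniform tail control over $\Pp$, to be \textbf{the main obstacle}.

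For (3) I would compute the density explicitly. Since $R_{\vert\F_s}=P$ we have $E_R(X\vert\F_s)=E_P\big(X\,\tfrac{dR}{dP}\,\big|\,\F_s\big)$, so a tower-property computation gives $E_Q(E_R(X\vert\F_s))=E(hX)$ with $h:=E_P\big(\tfrac{dQ}{dP}\big|\F_s\big)\cdot\tfrac{dR}{dP}$, an $\F_t$-measurable probability density (one checks $E(h)=1$ using $E_P(dR/dP\vert\F_s)=1$). The measure $\hat Q$ with $d\hat Q/dP=h$ is the pasting of $Q$ before $s$ and $R$ after $s$; hence $\alpha_{0s}(\hat Q)=\alpha_{0s}(Q)$ and, by locality of the penalty, $\alpha_{st}(\hat Q)=\alpha_{st}(R)$. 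The cocycle then gives $\alpha_{0t}(\hat Q)=\alpha_{0s}(Q)+E_Q(\alpha_{st}(R))<\infty$, where finiteness uses Lemma~\ref{L2} and the defining bound of $\wPp$ (recall $Q\in\mathcal{Q}$). Thus $\hat Q\sim P$ lies in the sensitivity set of $\rho_{0t}$, and the domination of $\rho_{0t}$ from (1) together with Proposition~\ref{prop1} yields $\Vert h\Vert_q=\Vert d\hat Q/dP\Vert_q\le K$.

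Finally, (4) is immediate. From \eqref{eqNUM} we have $\Pp\subseteq\wPp$, and every $R\in\wPp$ satisfies $R\ll P$ with $R_{\vert\F_s}=P$, so by \eqref{rm-representation0} the $\esssup$ over $\wPp$ is sandwiched between the $\esssup$ over $\Pp$ and $\rho_{st}(X)$; since these two coincide by \eqref{Rep2}, we obtain \eqref{Rep3}.
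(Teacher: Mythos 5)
Your items (1) and (4) match the paper's proof, and your item (3) is a correct variant of it: where the paper bounds $E_Q(E_R(-X\vert\F_s))$ by $\rho_{0t}(X)+\alpha_{0s}(Q)+E_Q(\alpha_{st}(R))$ and then invokes domination of $\rho_{0t}$, you identify the composition with the pasted measure $\hat Q$, place $\hat Q$ in the sensitivity set of $\rho_{0t}$ via the cocycle, and apply Proposition~\ref{prop1}; both routes work. In item (2), however, there is first a smaller flaw: Lemma~\ref{L1} cannot be applied to ``$\rho_{st}$ restricted to $L_\infty(\F_t)$'', because that restriction takes values only in $L_p(\F_s)$, not $L_\infty(\F_s)$ --- the risk measures are not assumed normalised, so $\rho_{st}(0)\in L_p(\F_s)$ may be unbounded. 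One must pass to $\breve\rho_{st}:=\rho_{st}-\rho_{st}(0)$, apply Lemma~\ref{L1} to it, and use Lemma~\ref{lemmaA} to identify its $L_\infty$-penalty as $\alpha_{st}+\rho_{st}(0)$ (the paper's Step~1); accordingly the representing measures satisfy $\alpha_{st}(R)\in L_p(\F_s)$, not $\alpha_{st}(R)\in L_\infty(\F_s)$ as you wrote, which still places them in $\Pp$.

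The genuine gap is exactly where you flagged the main obstacle. Your uniform tail bound $\sup_{R\in\Pp}E_R\big((\vert X\vert-n)^+\big)\le K\Vert(\vert X\vert-n)^+\Vert_p$ rests on the inclusion $\Pp\subseteq\mathcal{B}^K$, which you derive from $\alpha_{0t}(R)=\alpha_{0s}(P)+E_P(\alpha_{st}(R))<\infty$. But nothing in the hypotheses makes $\alpha_{0s}(P)$ finite: sensitivity provides \emph{some} $\tilde Q\sim P$ with finite penalty, not the reference measure $P$ itself. For instance, if $\rho_{0s}(X)=E_{\tilde Q}(-X)$ for a single $\tilde Q\sim P$, $\tilde Q\ne P$, then $\alpha_{0s}(P)=\sup_{X}E\big((\tfrac{d\tilde Q}{dP}-1)X\big)=+\infty$, compatibly with all the standing assumptions. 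So an $R\in\Pp$ need not lie in the sensitivity set of $\rho_{0t}$, Proposition~\ref{prop1} does not apply to it, and $\Pp\subseteq\mathcal{B}^K$ remains unproven --- indeed the paper never asserts it: its whole architecture (proving item (3) \emph{before} completing item (2), and later working with the sets $B^K_{st}$ of Theorem~\ref{last}) is built around the fact that only the compositions $E_Q(E_R(\,\cdot\,\vert\F_s))$, $Q\in\mathcal{Q}$, can be controlled, not the individual densities $dR/dP$. The repair is the paper's Step~3, and it is available to you because your proof of item (3) nowhere uses item (2): prove (3) first, then test the representation under $E_Q$ with $Q\in\mathcal{Q}$ rather than under $E_P$. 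By (3), both $X\mapsto E_Q(\rho_{st}(X))$ and $X\mapsto\sup_{R\in\Pp}\big(E_Q(E_R(-X\vert\F_s))-E_Q(\alpha_{st}(R))\big)$ are $L_p$-continuous; they agree on $L_\infty(\F_t)$ by Step~1, hence on all of $L_p(\F_t)$; Neveu's lemma (the lattice property you already invoked) together with $Q\sim P$ and the pointwise inequality $\rho_{st}(X)\ge\esssup_{R\in\Pp}\big(E_R(-X\vert\F_s)-\alpha_{st}(R)\big)$ then upgrades this scalar identity to the $P$-a.s. identity \eqref{Rep2}.
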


\begin{proof}\hspace{1cm}
\begin{enumerate}
\item[1.]
For all $Z \in L_p(\F_t)$ we have $\rho_{0T}(Z) = \rho_{0t}(-\rho_{tT}(Z)) = \rho_{0t}(Z-\rho_{tT}(0))$.
By hypothesis, $\rho_{tT}(0) \in L_p(\F_t)$, thus for all $Y \in L_p(\F_t)$ we have
$\rho_{0t}(Y) = \rho_{0T}(Y+\rho_{tT}(0))$.
This shows that $\rho_{0t}$ is dominated. From Lemma \ref{L2}, $\rho_{0t}$ is sensitive.
This completes the proof of item 1.

\item[2.] and 3. The proofs of items 2 and 3 proceed together, first proving the result in item 2  for $X\in L_\infty(\F_t)$, then item 3, and finally, item 2 for $X\in L_p(\F_t)$. The argument is
split in steps.

\bigskip
\underline{Step 1}.
Let $0 \leq s \leq t\leq T$. Define
\begin{equation}
\label{eq rho bar}
\breve\rho_{st} (X) := \rho_{st}(X) - \rho_{st}(0), \qquad X\in L_p(\F_t).
\end{equation}
The translation invariance and the monotonicity of $\rho_{st}$ imply that
$$
\breve\rho_{st}:  L_\infty(\F_t) \longrightarrow L_\infty(\F_s).
$$
The minimal penalty  associated to the restriction of $\breve \rho_{st}$ to $L_{\infty}(\F_t)$ is
$$
\breve\alpha_{st} (Q) = \underset{X\in L_{\infty}(\F_t)}\esssup \big( E_Q(-X\vert \F_s) - \breve\rho_{st}(X) \big).
$$ 
for all $Q\sim P$. Thus
$$
\breve\alpha_{st} (Q) = \underset{X\in L_{\infty}(\F_t)}\esssup  \big( E_Q(-X\vert \F_s) - \rho_{st}(X) \big) + \rho_{st}(0).
$$ 
Then Lemma \ref{lemmaA} yields
\begin{equation}
\label{eq alpha}
\breve\alpha_{st}(Q) = \alpha_{st}(Q) + \rho_{st}(0).
\end{equation}
From (\ref{eq alpha}) and Lemma \ref{L2} we obtain that for all $Q \in {\cal Q}$,  $E_Q(\breve\alpha_{st}(Q))<\infty$ for all $Q$ such that $E (\alpha_{st}(Q)) < \infty$.
From Lemma \ref{L1} we have that the risk measure $\breve \rho_{st}$ satisfies the representation \eqref{R_t}.
This, together with \eqref{eq rho bar} and \eqref{eq alpha} and $\rho_{st}(0) \in L_p({\cal F}_s)$,  proves that \eqref{Rep2} is satisfied for all $X \in L_\infty(\F_t)$.

\underline{Step 2}.
Let $Q\in \mathcal{Q}$ and $R\in \mathcal{ \tilde P}_{st}$.
For all $X \in L_p(\F_t)$ we have 
$$
E_Q(E_R(-X\vert\F_s)) - \alpha_{0s} (Q) - E_Q(\alpha_{st} (R)) \leq \rho_{0s}(-\rho_{st}(X)) = \rho_{0t}(X)$$
From Step 1, $\rho_{0t}$ is dominated: $\rho_{0t}(X)\leq K \Vert X \Vert_p+  C
$. 
Moreover, $\alpha_{0s}(Q) < \infty$ and $\sup_{Q \in \mathcal{Q}} E_Q( \alpha_{st}(R)) < \infty$.
Thus there is some $\kappa \in \mathbb{R}$ such that
$$
E_Q(E_R(-X\vert\F_s)) \leq K \Vert X \Vert_p +  \kappa, \qquad X \in L_p(\F_t).
$$
This proves item 3.

\underline{Step 3}.
Let $Q \in \mathcal{Q}$. 
The functional $\phi(X) := E_Q(\rho_{st}(X))$, $X\in L_p(\F_t)$, defines a finite risk measure on $L_p(\F_t)$.
It is thus continuous for the $L_p$-norm, see \cite{FiSv}.
From the definition of the minimal penalty we have that
$$
\rho_{st} (X) \geq \underset{R\in \mathcal{P}_{st}}\esssup \Big(  E_R (-X\vert \F_s) - \alpha_{st}(R) \Big).
$$
Furthermore, for any given $X$, the set
$$
\Big\{ E_R(-X\vert \F_s) - \alpha_{st}(R), \: R\in \mathcal{P}_{st} \big\}
$$
is a lattice upward directed.
Thus, in view of Proposition VI1.1 in \cite{Neveu}, in order to prove \eqref{Rep2} for all $X \in L_p(\F_t)$, it is enough to prove that
\begin{align}
\label{E}
\phi(X):&= E_Q(\rho_{st}(X)) \\&
= \sup_{R \in \mathcal{P}_{st}} \Big( E_Q(E_R(-X\vert \F_s) - E_Q(\alpha_{st}(R) \Big),
\quad X \in L_p(\F_t).\notag
\end{align}
From Step 1,  we already know that \eqref{E} is satisfied for all $X \in L_\infty (\F_t)$.
Observe that $\phi$ is continuous in the $L_p$-norm. Moreover, the continuity of the right-hand side of \eqref{E} in the $L_p$-norm follows from Step 2 and the inclusion $\mathcal{P}_{st} \subseteq  \mathcal{\tilde P}_{st}$. 
This ends the proof of item 2.
\item[4.]
The representation \eqref{Rep3} follows directly from \eqref{Rep2} and the observation that $\mathcal{P}_{st} \subseteq  \mathcal{\tilde P}_{st}$.
\end{enumerate}
By this the proof is complete.
\end{proof}

\bigskip
\begin{lemma}\hspace{1cm}
\begin{enumerate}
 \item  For all $0 \leq r \leq s \leq t \leq T$,  for all $R \in \tilde{\cal P}_{st}$, there is $S \in \tilde{\cal P}_{sT}$, such that $R$ is the restriction of $S$ to ${\cal F}_t$.
 \item For all $0 \leq r \leq s \leq t \leq T$,   $\tilde{\cal P}_{rt} \subset \tilde{\cal P}_{rs}$, which means that the restriction to ${\cal F}_s$ of an element of $\tilde{\cal P}_{rt}$, belongs to $\tilde{\cal P}_{rs}$.
\end{enumerate}
\label{LemmaER}
\end{lemma}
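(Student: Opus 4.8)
The plan is to reduce both statements to two ingredients. The first is the cocycle condition for the minimal penalties of a strong time-consistent family, namely
$$
\alpha_{rt}(R) = \alpha_{rs}(R) + E_R\big(\alpha_{st}(R)\,\vert\,\F_r\big), \qquad r \leq s \leq t,
$$
valid for $R\sim P$ (see \eqref{cc} and Remark \ref{charcoc}, from \cite{BN02}). The second is the uniform control of item 3 of Proposition \ref{Prop2rep}. I also use the elementary fact that $\alpha_{ab}(R)$ depends on $R$ only through the restriction $R_{\vert \F_b}$, since the defining $\esssup$ in \eqref{minpen} runs over $X\in L_p(\F_b)$ and only involves $E_R(-X\vert\F_a)$, and I repeatedly invoke $\alpha_{ab}(R)\geq -\rho_{ab}(0)$ with $\rho_{ab}(0)\in L_p(\F_a)$.

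I would prove item 2 first, as it is the easier direction. Let $R\in\widetilde{\mathcal{P}}_{rt}$ and set $R':=R_{\vert\F_s}$. Then $R'\sim P$ on $\F_s$ and $R'_{\vert\F_r}=P$, so it remains to bound $\sup_{Q\in\mathcal{Q}}E_Q(\alpha_{rs}(R'))$. By the restriction remark $\alpha_{rs}(R')=\alpha_{rs}(R)$, and the cocycle gives $\alpha_{rs}(R)=\alpha_{rt}(R)-E_R(\alpha_{st}(R)\vert\F_r)$. Using $\alpha_{st}(R)\geq-\rho_{st}(0)$ I obtain
$$
\alpha_{rs}(R')\leq \alpha_{rt}(R)+E_R\big(\rho_{st}(0)\,\vert\,\F_r\big).
$$
Taking $E_Q$, the first term is bounded uniformly in $Q$ because $R\in\widetilde{\mathcal{P}}_{rt}$, and for the second I apply item 3 of Proposition \ref{Prop2rep} with the index pair $(r,t)$ to $X=\rho_{st}(0)\in L_p(\F_s)\subseteq L_p(\F_t)$: there is $h\in L_q(\F_t)$, $\Vert h\Vert_q\leq K$, with $E_Q(E_R(\rho_{st}(0)\vert\F_r))=E(h\,\rho_{st}(0))\leq K\Vert\rho_{st}(0)\Vert_p$, independently of $Q$. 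Since $\alpha_{rs}(R')\geq-\rho_{rs}(0)$ prevents the integral from being $-\infty$, this gives $\sup_Q E_Q(\alpha_{rs}(R'))<\infty$, i.e. $R'\in\widetilde{\mathcal{P}}_{rs}$.

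For item 1 I would build the extension by gluing conditional structures. First note $\mathcal{P}_{tT}\neq\emptyset$: applying Lemma \ref{L1} to $\breve\rho_{tT}$ (continuous from below, with $E_Q(\breve\alpha_{tT}(Q))<\infty$ for $Q\in\mathcal{Q}$ by Lemma \ref{L2} and \eqref{eq alpha}), the argument in its Step 1 yields $R'\sim P$ with $R'_{\vert\F_t}=P$ and $\breve\alpha_{tT}(R')\in L_\infty(\F_t)$, whence $\alpha_{tT}(R')=\breve\alpha_{tT}(R')-\rho_{tT}(0)\in L_p(\F_t)$ via \eqref{eq alpha}. Now, given $R\in\widetilde{\mathcal{P}}_{st}$, define $S$ on $\F_T$ by
$$
\frac{dS}{dP}\Big\vert_{\F_T}=\frac{dR}{dP}\Big\vert_{\F_t}\cdot\frac{dR'}{dP}\Big\vert_{\F_T}.
$$
Since $R'_{\vert\F_t}=P$, a direct computation gives $S_{\vert\F_t}=R$ (hence $S_{\vert\F_s}=P$), $S\sim P$, and $E_S(\,\cdot\,\vert\F_t)=E_{R'}(\,\cdot\,\vert\F_t)$, so $\alpha_{tT}(S)=\alpha_{tT}(R')\in L_p(\F_t)$, while $\alpha_{st}(S)=\alpha_{st}(R)$. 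The cocycle then yields $\alpha_{sT}(S)=\alpha_{st}(R)+E_R(\alpha_{tT}(R')\vert\F_s)$; taking $E_Q$, I bound the first term by membership $R\in\widetilde{\mathcal{P}}_{st}$ and the cross term by item 3 of Proposition \ref{Prop2rep} applied to $X=\alpha_{tT}(R')\in L_p(\F_t)$, uniformly in $Q$. Hence $S\in\widetilde{\mathcal{P}}_{sT}$ and $R=S_{\vert\F_t}$.

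The main obstacle is the gluing step in item 1: one must exhibit a single reference measure $R'$ whose conditional structure over $[t,T]$ has penalty $\alpha_{tT}(R')$ lying in $L_p(\F_t)$ — which is exactly where the sensitivity/existence content of Lemma \ref{L1} enters — and then verify that pasting this structure onto $R$ leaves $\alpha_{st}$ unchanged while realising $\alpha_{tT}(S)=\alpha_{tT}(R')$. Once this is set up, the uniform bound on the resulting term $E_Q(E_R(\alpha_{tT}(R')\vert\F_s))$ is precisely the statement of item 3 of Proposition \ref{Prop2rep}, so no further delicate estimate is required.
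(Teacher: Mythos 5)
Your proof is correct and follows essentially the same route as the paper's: the same density-gluing construction $\frac{dS}{dP}=\frac{dR}{dP}\cdot\frac{dR'}{dP}$ with a reference measure from ${\cal P}_{tT}$, the same cocycle identity $\alpha_{sT}(S)=\alpha_{st}(R)+E_R(\alpha_{tT}(R')|{\cal F}_s)$ combined with the uniform bound of item 3 of Proposition \ref{Prop2rep}, and, for item 2, the identical estimate $\alpha_{rs}(R)\leq\alpha_{rt}(R)+E_R(\rho_{st}(0)|{\cal F}_r)$ followed by the same appeal to that item. The only difference is that you explicitly establish ${\cal P}_{tT}\neq\emptyset$ via Lemma \ref{L1} (inheriting the paper's own implicit use of \eqref{eq alpha} there), a point the paper's proof leaves unaddressed by simply writing ``let $\tilde Q\in{\cal P}_{tT}$''.
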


\begin{proof}\hspace{1cm}
 \begin{enumerate}
  \item 
  $R \in \tilde{\cal P}_{st}$, let $\tilde Q \in {\cal P}_{tT}$. Let $S$ be the probability measure equivalent with $P$ such that 
  $$\frac{dS}{dP}=\frac{dR}{dP}   \frac{\frac{d\tilde{Q}}{dP}}{E(\frac{d\tilde{Q}}{dP}|{\cal F}_t)}.$$
  In particular the restriction of $S$ to ${\cal F}_t$ is equal to $R$. It follows from the definition of $S$ and the properties of the minimal penalty that 
  $$\alpha_{sT}(S)=\alpha_{st}(R)+E_R(\alpha_{tT}(\tilde{Q})|{\cal F}_s).$$
  Thus 
  \begin{equation}
  \sup_{Q \in {\cal Q}}E_Q(\alpha_{sT}(S))\leq \sup_{Q \in {\cal Q}}E_Q(\alpha_{st}(R))+ 
  \sup_{Q \in {\cal Q}}E_Q[E_R(\alpha_{tT}(\tilde{Q}|{\cal F}_s)].
  \label{eqPst}
  \end{equation}
  From item 3 of Proposition \ref{Prop2rep}, we have that, for every $Q \in {\cal Q}$, there is $h \in L_q({\cal F}_t)$ with $||h||_q \leq K$ such that, for all $X$ in $L_p({\cal F}_t)$, $E_Q[E_R(X|{\cal F}_s)]=E(hX) \leq K ||X||_p$. 
  By definition of $\tilde{\cal P}_{st}$ and 
  ${\cal P}_{t,T}$ and using (\ref{eqPst}), we have that $S \in \tilde{\cal P}_{sT}$. 
  \item 
  Let $R \in \tilde{\cal P}_{rt}$. It follows from the definition of the minimal penalty that $\alpha_{st}(R) 
  \geq -\rho_{st}(0)$. We deduce then from the cocycle condition that 
  $$\alpha_{rs}(R) \leq \alpha_{rt}(R)+E_R(\rho_{st}(0)|{\cal F}_r)$$
  where $\rho_{st}(0)$ belongs to $L_p({\cal F}_s)$. 
  Then, from Proposition \ref{Prop2rep} item 3, we obtain $\sup_{Q \in {\cal Q}} E_Q(\alpha_{rs}(R)) <\infty$. This gives the result.
 \end{enumerate}
 The proof is complete.
 \end{proof}

\bigskip
We  will now extend $\rho_{st}$ to $L^c_t$ for every $0 \leq s \leq t \leq T$. We first prove the following result.

\begin{proposition}
 Let $0=s_0 <s_1<...<s_n=T$.
 For all $Q_i \in \tilde{\cal P}_{s_is_{i+1}}$ with $i=0,...,n-1$, 
 let $Q$ be the unique probability measure  on ${\cal F}_T$ such that 
 $$
 E_Q(X)=E_{Q_0}(E_{Q_1}( \cdots E_{Q_{n-1}}(X|{\cal F}_{s_{n-1}}) \cdots |{\cal F}_{s_1})),
 \quad X\in L_{\infty}({\cal F}_T).
 $$
 Then $Q$ belongs to ${\cal Q}$.
 \label{propcomp}
\end{proposition}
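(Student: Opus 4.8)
The plan is to reduce the statement to a single \emph{pasting step} and then iterate. First I would record the density structure: writing $Z_i:=\frac{dQ_i}{dP}$ on $\F_{s_{i+1}}$, the condition $Q_i\in\widetilde{\mathcal P}_{s_is_{i+1}}$ forces $E(Z_i\mid\F_{s_i})=1$, and telescoping the defining identity for $Q$ (the usual pasting of kernels) gives $\frac{dQ}{dP}=\prod_{i=0}^{n-1}Z_i$ on $\F_T$. Since each $Z_i>0$ $P$-a.s., the product is strictly positive, so $Q\sim P$; this settles the easy half, and it remains to prove $\alpha_{0T}(Q)<\infty$. I would also isolate a \emph{localisation} fact used repeatedly: if $C$ is the paste of $A$ on $(r,s]$ and $B$ on $(s,t]$, i.e.\ $\frac{dC}{dP}=\frac{dA}{dP}\,\frac{dB}{dP}$ with $\frac{dA}{dP}$ being $\F_s$-measurable and $E(\frac{dB}{dP}\mid\F_s)=1$, then pulling the $\F_s$-measurable factor through the conditional expectation yields $C=A$ on $\F_s$ and $E_C(\,\cdot\mid\F_s)=E_B(\,\cdot\mid\F_s)$ on all of $L_p(\F_t)$; hence directly from \eqref{minpen}, $\alpha_{rs}(C)=\alpha_{rs}(A)$ and $\alpha_{st}(C)=\alpha_{st}(B)$.

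The core is the following pasting claim: if $A\in\widetilde{\mathcal P}_{rs}$ and $B\in\widetilde{\mathcal P}_{st}$, then the paste $C$ belongs to $\widetilde{\mathcal P}_{rt}$. Granting it, the proposition follows by pasting from the right: $Q_{n-1}\in\widetilde{\mathcal P}_{s_{n-1}T}$, and pasting successively with $Q_{n-2},\dots,Q_0$ keeps us in $\widetilde{\mathcal P}_{s_{n-2}T},\dots,\widetilde{\mathcal P}_{0T}$, the last paste being exactly $Q$. Finally $\widetilde{\mathcal P}_{0T}=\mathcal Q$, because $\alpha_{0T}(R)$ is $\F_0$-measurable, hence constant, so the condition $\sup_{Q'\in\mathcal Q}E_{Q'}(\alpha_{0T}(R))<\infty$ (the supremum being over the set $\mathcal Q$, nonempty by sensitivity of $\rho_{0T}$) reduces to $\alpha_{0T}(R)<\infty$; thus $Q\in\mathcal Q$.

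To prove the pasting claim I would use the cocycle condition \eqref{cc} together with the localisation above to write, for the paste $C$,
\begin{equation*}
\alpha_{rt}(C)=\alpha_{rs}(C)+E_C(\alpha_{st}(C)\mid\F_r)=\alpha_{rs}(A)+E_A(\alpha_{st}(B)\mid\F_r),
\end{equation*}
the last equality because $\alpha_{st}(B)$ is $\F_s$-measurable. Applying $E_{Q'}$ for $Q'\in\mathcal Q$ and taking the supremum, the first term contributes $\sup_{Q'\in\mathcal Q}E_{Q'}(\alpha_{rs}(A))<\infty$ since $A\in\widetilde{\mathcal P}_{rs}$, so everything reduces to bounding $\sup_{Q'\in\mathcal Q}E_{Q'}\big(E_A(\alpha_{st}(B)\mid\F_r)\big)$ uniformly in $Q'$.

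This last uniform bound is the main obstacle, because $\alpha_{st}(B)$ need not lie in $L_p$, so one cannot simply combine H\"older with item 3 of Proposition \ref{Prop2rep}. The device I would use is to realise the composed functional as the restriction of a genuine element of $\mathcal Q$: fix $Q'\in\mathcal Q$, extend $A$ to some $\hat A\in\widetilde{\mathcal P}_{rT}$ via Lemma \ref{LemmaER}(1), and let $\nu$ be the paste of $Q'$ on $(0,r]$ and $\hat A$ on $(r,T]$. The restriction of $\nu$ to $\F_s$ then coincides with the composition $W\mapsto E_{Q'}(E_A(W\mid\F_r))$ on bounded $W$, and this identity extends to the bounded-below $\alpha_{st}(B)$ by monotone convergence (applied to $\alpha_{st}(B)+\rho_{st}(0)\ge 0$, with $\rho_{st}(0)\in L_p$), so that $E_{Q'}(E_A(\alpha_{st}(B)\mid\F_r))=E_\nu(\alpha_{st}(B))$. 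Moreover $\nu\in\mathcal Q$: by the cocycle condition and localisation, $\alpha_{0T}(\nu)=\alpha_{0r}(Q')+E_{Q'}(\alpha_{rT}(\hat A))$, which is finite since $\alpha_{0r}(Q')\in\R$ by Lemma \ref{L2} and $E_{Q'}(\alpha_{rT}(\hat A))\le\sup_{Q''\in\mathcal Q}E_{Q''}(\alpha_{rT}(\hat A))<\infty$ by definition of $\widetilde{\mathcal P}_{rT}$. Consequently
\begin{equation*}
\sup_{Q'\in\mathcal Q}E_{Q'}\big(E_A(\alpha_{st}(B)\mid\F_r)\big)\le\sup_{Q''\in\mathcal Q}E_{Q''}(\alpha_{st}(B))<\infty,
\end{equation*}
the final inequality being exactly $B\in\widetilde{\mathcal P}_{st}$. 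This gives $C\in\widetilde{\mathcal P}_{rt}$ and, by the iteration above, $Q\in\mathcal Q$.
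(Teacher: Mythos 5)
Your proof is correct, and it reaches the conclusion by a genuinely different route than the paper's. The paper argues by induction ``from the left'': the two-factor case $0=s_0<s_1<T$ is handled directly, since there membership in ${\cal Q}$ amounts to finiteness of the \emph{scalar} $\alpha_{0T}(Q)$, read off from the cocycle condition after extending $Q_0$ via Lemma \ref{LemmaER} item 1; the inductive step composes the first $n$ factors into an element $R$ of ${\cal Q}$, restricts it to $\F_{s_n}$ using Lemma \ref{LemmaER} item 2 so as to land in $\tilde{\cal P}_{0s_n}$, and then pastes with the last factor, invoking the two-factor case again. Because every composition the paper forms is anchored at time $0$, it never has to verify membership in $\tilde{\cal P}_{rt}$ for $r>0$, i.e.\ it never needs a bound uniform over ${\cal Q}$. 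You instead prove the stronger closure property that pasting $A\in\tilde{\cal P}_{rs}$ with $B\in\tilde{\cal P}_{st}$ yields an element of $\tilde{\cal P}_{rt}$, and iterate from the right; the whole difficulty is then concentrated in the uniform bound $\sup_{Q'\in{\cal Q}}E_{Q'}\big(E_A(\alpha_{st}(B)\vert\F_r)\big)<\infty$, which you resolve with the auxiliary measure $\nu$ (paste of $Q'$ with an extension $\hat A\in\tilde{\cal P}_{rT}$ of $A$), so that each composed functional is realised by a single element of ${\cal Q}$ and the defining bound for $B\in\tilde{\cal P}_{st}$ applies. Both arguments run on the same fuel --- the cocycle condition, localisation of penalties under pasting, and Lemma \ref{LemmaER} item 1 --- and your applications of the cocycle condition to pasted measures (your $C$ and $\nu$) are on exactly the same footing as the paper's own applications in the proof of Lemma \ref{LemmaER} and in its Step 1, requiring the general form from \cite{BN02} rather than only the special case \eqref{cc}. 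What the paper's organisation buys is economy: only scalar finiteness ever needs checking, with Lemma \ref{LemmaER} item 2 doing the bookkeeping and no analogue of your $\nu$-construction needed. What yours buys is a stronger intermediate statement of independent interest, namely stability of the whole family $(\tilde{\cal P}_{st})_{s,t}$ under pasting, which the paper neither states nor proves.
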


\begin{proof}
For $n=1$ observe that $\tilde{\cal P}_{0T}={\cal Q}$. 
We prove the result by induction for $n \geq 2$.
 
\underline{Step 1}. 
For $n=2$, we have $0=s_0 <s_1< s_2=T$ and $E_Q(X)=E_{Q_0}(E_{Q_1}(X|{\cal F}_{s_1}))$, for any $X\in L_\infty (\F_T)$.  
From the cocycle condition and the properties of the minimal penalty, it is
$$
\alpha_{0T}(Q)=\alpha_{0s_1}(Q_0)+E_{Q_0}(\alpha_{s_1T}(Q_1)).
$$
By assumption the probability measure $Q_0 \in \tilde{\cal P}_{0s_1}$. 
From item 1 of Lemma \ref{LemmaER} we can see that $Q_0$ is the restriction to ${\cal F}_{s_1}$  of an element of  ${\cal Q}$. Then we easily see also that $\alpha_{0T}(Q)<\infty$ from the definition of $\tilde {\cal P}_{s_1T}$. Thus $Q\in {\cal Q}$.
 
 \underline{Step 2}.
 We assume that the result holds for $n$ and we prove it for $n+1$.
 Let $0= s_0 <s_1<...<s_n<s_{n+1}=T$.
 From the induction hypothesis $Q_{n-1} \in \tilde{\cal P}_{s_{n-1}s_n}$. 
 From item 1 of Lemma \ref{LemmaER} we obtain that $Q_{n-1}$ is the restriction to 
 ${\cal F}_{s_n}$ of an element $R_{n-1} \in \tilde{\cal P}_{s_{n-1}T}$. 
Then by induction we can see that the probability measure $R$ on ${\cal F}_T$ defined by
 $$
 E_R(X)=E_{Q_0}(E_{Q_1}(\cdots E_{R_{n-1}}(X|{\cal F}_{s_{n-1}})\cdots |{\cal F}_{s_1}))
 $$
 for all  $X$ in $L_{\infty}({\cal F}_T)$, belongs to ${\cal Q}$. 
 From item 2 of Lemma \ref{LemmaER}, the restriction $\tilde R$ of $R$ to ${\cal F}_{s_n}$ belongs to $\tilde{\cal P}_{0s_n}$ and
 $$
 E_Q(X)=E_{Q_0}(E_{Q_1}(\cdots E_{Q_{n}}(X|{\cal F}_{s_{n}}) \cdots|{\cal F}_{s_1})=E_{\tilde R}(E_{Q_{n}}(X|{\cal F}_{s_{n}}).
 $$ 
 The result follows then from Step 1.
\end{proof}

\bigskip
\begin{theorem}
Set $p\in [1,\infty)$.
Let $(\rho_{st})_{s,t}$ be a  strong time-consistent fully-dynamic risk measure on $(L_p({\cal F}_t))_t$ such that $\rho_{0T}$ 
is dominated and sensitive.
\begin{enumerate}
 \item 
 For all $s \leq t $, the risk measure $\rho_{st}$ admits a unique extension $\tilde \rho_{st}:L^c_t \rightarrow L^c_s$ such 
 that  
 \begin{equation}
 c(|\tilde \rho_{st}(X)-\tilde \rho_{st}(Y)|) \leq c(|X-Y|), \qquad X,Y \in L^c_t.
 \label{eqcrho}
 \end{equation}
 \item The extension $\tilde \rho_{st}$ admits the following representation
 \begin{equation}
 \tilde \rho_{st}(X)=\underset{R \in \tilde {\cal P}_{st}}\esssup (E_R(-X|{\cal F}_s)-\alpha_{s,t}(R)), \qquad X \in L^c_t.
  \label{Rep4}
 \end{equation}
\end{enumerate}
\label{thmextrho} 
\end{theorem}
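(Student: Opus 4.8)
The plan is to realise $\tilde\rho_{st}$ as the unique $c$-continuous extension of the restriction $\rho_{st}|_{L_\infty(\F_t)}$. By Definition \ref{L^c} the essentially bounded $\F_t$-measurable random variables are dense in $L^c_t$, and $L^c_s$ is a Banach space; moreover $\rho_{st}$ maps $L_\infty(\F_t)\subseteq L_p(\F_t)$ into $L_p(\F_s)\subseteq L^c_s$ by Lemma \ref{Lemma L^c}. Hence, \emph{if} I can prove that $\rho_{st}$ is $1$-Lipschitz for the seminorm $c$ on $L_\infty(\F_t)$, that is $c(|\rho_{st}(X)-\rho_{st}(Y)|)\le c(|X-Y|)$, then the standard extension of a uniformly continuous map to the completion yields a unique $\tilde\rho_{st}\colon L^c_t\to L^c_s$ obeying \eqref{eqcrho}; uniqueness among maps satisfying \eqref{eqcrho} is then automatic, since any such map is $c$-continuous and agrees with $\rho_{st}$ on the dense set $L_\infty(\F_t)$. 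This settles item 1 once the Lipschitz bound is secured.

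To obtain the Lipschitz bound I would start from the representation \eqref{Rep3}, namely $\rho_{st}(X)=\esssup_{R\in\wPp}(E_R(-X|\F_s)-\alpha_{st}(R))$. The elementary inequality $\sup_R f_R-\sup_R g_R\le\sup_R(f_R-g_R)$, applied with the same penalty in both terms, gives $\rho_{st}(X)-\rho_{st}(Y)\le\esssup_{R\in\wPp}E_R(Y-X|\F_s)$, and by symmetry $|\rho_{st}(X)-\rho_{st}(Y)|\le G$, where $G:=\esssup_{R\in\wPp}E_R(|X-Y|\,|\,\F_s)$. Taking $E_Q$ and then the supremum over $Q\in\mathcal{Q}$, the entire matter reduces to the key estimate $\sup_{Q\in\mathcal{Q}}E_Q\big(\esssup_{R\in\wPp}E_R(W|\F_s)\big)\le c(W)$ for every nonnegative $\F_t$-measurable $W$ (applied to $W=|X-Y|$).

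This estimate is the heart of the argument and the step I expect to be the main obstacle. First I would verify that the family $\{E_R(W|\F_s):R\in\wPp\}$ is upward directed: for $R_1,R_2\in\wPp$ and $A:=\{E_{R_1}(W|\F_s)\ge E_{R_2}(W|\F_s)\}\in\F_s$, pasting densities as $1_A\,dR_1/dP+1_{A^c}\,dR_2/dP$ produces $R_3\in\wPp$ (using $R_i{}_{|\F_s}=P$, locality of the penalty, and \eqref{eqNUM}) with $E_{R_3}(W|\F_s)=\max(E_{R_1}(W|\F_s),E_{R_2}(W|\F_s))$. Thus the esssup is an increasing $P$-a.s. limit of a countable subfamily $E_{R_n}(W|\F_s)$, so monotone convergence gives $E_Q(G)=\lim_nE_Q(E_{R_n}(W|\F_s))$; it remains to bound each $E_Q(E_R(W|\F_s))$ by $c(W)$. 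Here I would set $\widetilde Q:=Q_{|\F_s}$, which lies in $\widetilde{\mathcal P}_{0s}$ by Lemma \ref{LemmaER}(2); when $t<T$ I first replace $R$ by an element of $\widetilde{\mathcal P}_{sT}$ restricting to it via Lemma \ref{LemmaER}(1), which leaves $E_R(W|\F_s)$ unchanged for $\F_t$-measurable $W$. Pasting $\widetilde Q\in\widetilde{\mathcal P}_{0s}$ with this element of $\widetilde{\mathcal P}_{sT}$ as in Proposition \ref{propcomp} (partition $0<s<T$) produces a measure $\hat S\in\mathcal{Q}$ with $E_{\hat S}(W)=E_Q(E_R(W|\F_s))$, first for bounded $W$ and then for all nonnegative $W$ by monotone approximation. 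Since $\hat S\in\mathcal{Q}$ and $W\ge0$, this gives $E_Q(E_R(W|\F_s))=E_{\hat S}(W)\le\sup_{S\in\mathcal{Q}}E_S(W)=c(W)$, closing the estimate.

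Finally, for item 2 I would consider the right-hand functional $\Psi(X):=\esssup_{R\in\wPp}(E_R(-X|\F_s)-\alpha_{st}(R))$ and show it is well defined and $c$-continuous on $L^c_t$: the key estimate forces $E_R(|X|\,|\,\F_s)<\infty$ $P$-a.s. for $X\in L^c_t$, and the same sup-difference computation yields $c(|\Psi(X)-\Psi(Y)|)\le c(|X-Y|)$. Because $\Psi$ coincides with $\rho_{st}$ on $L_p(\F_t)$ by \eqref{Rep3}, in particular on the dense subset $L_\infty(\F_t)$ where it also equals $\tilde\rho_{st}$, the two $c$-continuous maps $\Psi$ and $\tilde\rho_{st}$ must agree on all of $L^c_t$; concretely, approximating $X\in L^c_t$ by $X_n\in L_\infty(\F_t)$ gives $\tilde\rho_{st}(X)=\lim_n\rho_{st}(X_n)=\lim_n\Psi(X_n)=\Psi(X)$ in $L^c_s$. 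This establishes \eqref{Rep4} and completes the proof.
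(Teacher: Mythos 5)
Your proposal is correct and follows essentially the same route as the paper's own proof: the $1$-Lipschitz bound for the seminorm $c$ on $L_\infty(\F_t)$ derived from \eqref{Rep3}, the lattice (upward-directedness) property to interchange $E_Q$ with the essential supremum, Lemma \ref{LemmaER} combined with Proposition \ref{propcomp} to dominate $E_Q\big(E_R(\,\cdot\,|\F_s)\big)$ by $c(\cdot)$, extension to $L^c_t$ by density, and identification of both sides of \eqref{Rep4} by $c$-continuity. You merely spell out the pasting and composition steps that the paper states tersely, so the arguments coincide in substance.
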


\begin{proof}\hspace{1cm}
\begin{enumerate}
\item
Let $X,Y \in L_{\infty}({\cal F}_t)$. 
Consider $A \in {\cal F}_s$ such that $|\rho_{st}(X)-\rho_{st}(Y)|=[\rho_{st}(X)-\rho_{st}(Y)]1_A+[\rho_{st}(Y)-
 \rho_{st}(X)]1_{A^c}$. From (\ref{Rep3}) we get
 \begin{equation}
 |\rho_{st}(X)-\rho_{st}(Y)| \leq \underset{R \in \tilde {\cal P}_{st}}\esssup \;E_R(|X-Y||{\cal F}_s).
  \label{eqrho1}
  \end{equation}
For all $X,Y \in L_{\infty}({\cal F}_t)$, the set $\{E_R(|X-Y||{\cal F}_s),R \in \tilde {\cal P}_{st}\}$ satisfies the lattice property. It follows that, for all $Q \in {\cal Q}$,
$$
E_Q \Big(\underset{R \in \tilde {\cal P}_{st}}\esssup E_R(|X-Y||{\cal F}_s)\Big)
=\underset{R \in \tilde {\cal P}_{st}}\sup E_Q \Big(E_R(|X-Y||{\cal F}_s)\Big).
$$
From equation (\ref{eqrho1}), from item 1 of Lemma \ref{LemmaER}, and from Proposition \ref{propcomp}, we obtain that
\begin{equation}
  c(|\rho_{st}(X)-\rho_{st}(Y)|) \leq c(|X-Y|),\qquad  X,Y \in L_{\infty}({\cal F}_t).
  \label{eqccont0}
  \end{equation}
On the other hand, $\rho_{st}(X) \in L_p({\cal F}_s) \subset L^c_s$, for all $X \in L_{\infty}({\cal F}_t)$.
Then item 1 in the statement of the theorem follows from the density of $L_{\infty}({\cal F}_t)$ 
 in $L^c_t$ and from equation (\ref{eqccont0}).
 \item
 The representation (\ref{Rep4}) is satisfied for all $X \in L_{\infty}({\cal F}_t)$, see item 4 of  Proposition \ref{Prop2rep}.
 Item 1 of the theorem provides the continuity of $\tilde \rho_{st}$ for the c norm.
Moreover, from Lemma \ref{LemmaER} item 1 
 and Proposition \ref{propcomp}, we have continuity for the c norm of the right-hand side of \eqref{Rep4}. By continuity, we can then see that (\ref{Rep4}) is satisfied for all $X$ in $L^c_t$.
 \end{enumerate}
 This completes the proof.
\end{proof}

\begin{corollary}
\label{cortildero}
The family  $(\tilde \rho_{st})_{s, t}$ is a strong time-consistent dynamic risk measure on $(L^c_t)$.  Furthermore $\tilde \rho_{0T}$ is dominated by 
 $\sup_{Q \in {\cal Q}} E_Q(-X)$.
\end{corollary}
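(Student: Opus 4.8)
The statement bundles three claims: that $(\tilde\rho_{st})_{s,t}$ is a fully-dynamic risk measure on $(L^c_t)_t$, that it is strong time-consistent in the sense of Definition \ref{def-fdtc}, and that $\tilde\rho_{0T}$ is dominated by the sublinear map $X\mapsto\sup_{Q\in\mathcal{Q}}E_Q(-X)$ in the sense of Definition \ref{def-domination}. The first claim I would dispatch quickly: monotonicity, convexity, and $\F_s$-translation invariance all hold for $\rho_{st}$ on $L_\infty(\F_t)$, where $\tilde\rho_{st}$ coincides with $\rho_{st}$ by construction, and they transfer to $L^c_t$ either by reading them off the robust representation \eqref{Rep4} of Theorem \ref{thmextrho}, or, more safely, by approximating in the $c$-seminorm and invoking the contraction estimate \eqref{eqcrho}. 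For instance $\tilde\rho_{st}(X+f)=\tilde\rho_{st}(X)-f$ for $f\in L^c_s$ follows from the bounded identity $\rho_{st}(X_n+f_n)=\rho_{st}(X_n)-f_n$ by taking $X_n\to X$ and $f_n\to f$ in $c$ and using the continuity of $\tilde\rho_{st}$. The regularity required on the Banach space $L^c_t$ is precisely the contraction estimate \eqref{eqcrho} itself.

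The heart of the matter is strong time-consistency, $\tilde\rho_{rt}(X)=\tilde\rho_{rs}(-\tilde\rho_{st}(X))$ for $r\le s\le t$ and $X\in L^c_t$. My plan is to prove it first on $L_\infty(\F_t)$ and then extend by continuity. A preliminary observation is indispensable: $\tilde\rho_{rs}$ agrees with $\rho_{rs}$ not only on $L_\infty(\F_s)$ but on all of $L_p(\F_s)$. Indeed, rerunning the computation \eqref{eqrho1}--\eqref{eqccont0} from the proof of Theorem \ref{thmextrho} with the representation \eqref{Rep3}, which by Proposition \ref{Prop2rep} holds on all of $L_p$, shows that $\rho_{rs}$ is itself $c$-contractive on $L_p(\F_s)$; since $L_p(\F_s)\subseteq L^c_s$ (Lemma \ref{Lemma L^c}) and $L_\infty(\F_s)$ is $c$-dense, the unique $c$-continuous extension $\tilde\rho_{rs}$ must coincide with $\rho_{rs}$ there. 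Then for $X\in L_\infty(\F_t)$ one has $\tilde\rho_{st}(X)=\rho_{st}(X)\in L_p(\F_s)$, so strong time-consistency of the original family yields $\tilde\rho_{rt}(X)=\rho_{rt}(X)=\rho_{rs}(-\rho_{st}(X))=\tilde\rho_{rs}(-\tilde\rho_{st}(X))$, the last step invoking the preliminary observation. For general $X\in L^c_t$ I would pick bounded $X_n\to X$ in $c$: the left side converges to $\tilde\rho_{rt}(X)$ by \eqref{eqcrho}, while \eqref{eqcrho} for $\tilde\rho_{st}$ gives $-\tilde\rho_{st}(X_n)\to-\tilde\rho_{st}(X)$ in $c$, and a second application of \eqref{eqcrho} for $\tilde\rho_{rs}$ propagates this to $\tilde\rho_{rs}(-\tilde\rho_{st}(X_n))\to\tilde\rho_{rs}(-\tilde\rho_{st}(X))$, so uniqueness of the $c$-limit closes the identity. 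The main obstacle, and the reason the preliminary step cannot be skipped, is exactly that $-\tilde\rho_{st}(X)$ lands in $L^c_s$ rather than in $L_p(\F_s)$, so the composition only makes sense for the \emph{extended} middle operator; the two nested contractions are what let the bounded-function identity survive the passage to the limit.

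For the domination, recall that $\widetilde{\mathcal{P}}_{0T}=\mathcal{Q}$ (as noted in the proof of Proposition \ref{propcomp}), so the representation \eqref{Rep4} at $s=0$, where $L^c_0=\R$, reads $\tilde\rho_{0T}(X)=\sup_{R\in\mathcal{Q}}\big(E_R(-X)-\alpha_{0T}(R)\big)$. Setting $m:=\inf_{R\in\mathcal{Q}}\alpha_{0T}(R)$, which is finite because $\alpha_{0T}(R)\ge-\rho_{0T}(0)$ and $\mathcal{Q}\ne\emptyset$, one has $\tilde\rho_{0T}(0)=-m$, and for every $R$ the bound $-\alpha_{0T}(R)\le-m$ gives $E_R(-X)-\alpha_{0T}(R)\le E_R(-X)-m$. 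Taking the supremum over $R$ yields $\tilde\rho_{0T}(X)-\tilde\rho_{0T}(0)\le\sup_{Q\in\mathcal{Q}}E_Q(-X)$, which is exactly the required domination by the sublinear risk measure $X\mapsto\sup_{Q\in\mathcal{Q}}E_Q(-X)$; this map is a genuine coherent risk measure and is finite on $L^c_T$ since it is bounded above by $c(X)$. I expect no difficulty here beyond this bookkeeping.
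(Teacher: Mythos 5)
Your proposal is correct and follows essentially the same route as the paper's (very terse) proof: the structural properties and strong time-consistency are obtained from the contraction estimate \eqref{eqcrho}, the $c$-density of $L_\infty(\F_t)$ in $L^c_t$, and the corresponding properties of $(\rho_{st})_{s,t}$ on $(L_p(\F_t))_t$, while the domination is read off the representation \eqref{Rep4} together with $\widetilde{\mathcal{P}}_{0T}=\mathcal{Q}$. Your ``preliminary observation'' that $\tilde\rho_{rs}$ coincides with $\rho_{rs}$ on all of $L_p(\F_s)$ — needed precisely because $\rho_{st}(X)$ is only in $L_p(\F_s)$ for bounded $X$ when the risk measures are not normalised — is exactly the step the paper's appeal to ``the corresponding properties on $(L_p(\F_t))_t$'' implicitly relies on, and your justification of it is sound.
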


\begin{proof}
The properties of monotonicity, translation invariance, and strong time-consistency for 
$(\tilde\rho_{st})_{s,t}$ follow from (\ref{eqcrho}) and the corresponding properties 
for $(\rho_{st})_{s,t}$ on $(L_p({\cal F}_t))_t$. The domination by $\sup_{Q \in {\cal Q}} E_Q(-X)$ follows from the representation 
(\ref{Rep4}) of $\tilde\rho_{0T}$ and the observation that $\tilde {\cal P}_{0T}={\cal Q}.$
\end{proof}

\subsubsection{The extended risk-indifference prices}

Now that we have the extension $(\tilde\rho_{st})_{s,t}$ on $(L^c_t)_t$, we can extend the price system $(x_{st})_{s,t}$ on $(L^c_t)_t$.
Note that we shall keep the same notation $x_{st}$ also for the corresponding extended price operator. 
 
 \begin{defn}
 Set $p\in [1,\infty)$.
Let $(\rho_{st})_{s,t}$ be a strong time-consistent, fully-dynamic risk measure on $(L_p({\cal F}_t))_t$ such that $\rho_{0T}$ is 
dominated and  sensitive.
  Let $s \leq t$. For all $X \in L^c_t$, define 
  $$x_{st}(X):=\underset{g \in {\cal C}_{st}^{\infty}}{\essinf}  \tilde \rho_{st}(g-X) - 
  \underset{g \in {\cal C}_{st}^{\infty}}{\essinf} \tilde \rho_{st}(g),$$
where $\tilde \rho_{st}$ is the extension of $\rho_{st}$ to $L^c_t$ defined in Theorem \ref{thmextrho} .
  \label{defextx}
 \end{defn}

\begin{proposition}\hspace{1cm}
\begin{enumerate}
 \item 
 The operator $x_{st}$ is well-defined on $L^c_t$ with values in $L^c_s$. 
 It extends the operator in Definition \ref{defRIP} (see also \eqref{eqRIP2}).
 Moreover, 
 \begin{equation}
 c(|x_{st}(X)-x_{st}(Y)|) \leq c(|X-Y|), \qquad X,Y \in L^c_t.
  \label{eqceqcontx}
 \end{equation}
\item 
The operator $x_{st}$ is convex, monotone, and satisfies the projection property.
\item 
Choose $Q_0 \in {\cal Q}$, then $x_{st}$ admits the following dual representation:
\begin{equation}
x_{st}(X)=\underset{R \in {\cal K}}{\esssup} (E_R(X|{\cal F}_s)-\gamma_{st}(R)),\qquad X \in L^c_t,
\label{eqrepx}
\end{equation}
where ${\cal K}$ is a set of probability measures in the dual of $L^c_t$ compact for the weak* topolology such that every $R \in {\cal K}$ is absolutely continuous with respect to $P$,
such that the restriction 
of $R$ to ${\cal F}_s$ is equal to $Q_0$, and 
$$
\gamma_{st}(R)= \underset{Y \in L^c_t}\esssup \big(E_R(Y|{\cal F}_s)-x_{st}(Y)\big).
$$
Furthermore, for all $X \in L^c_t$, there exists $Q_X \in {\cal K}$ such that 
\begin{equation}
x_{st}(X)=E_{Q_X}(X|{\cal F}_s)-\gamma_{st}(Q_X)).
\label{eqrepx2}
\end{equation}
\end{enumerate}
\label{propxL}
\end{proposition}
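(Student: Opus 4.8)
The plan is to obtain Parts 1 and 2 by transporting the arguments used for the non-extended operator (Proposition \ref{ri-prop}) to the extended risk measure $\tilde\rho_{st}$, and to obtain Part 3 through a Fenchel--Moreau duality on the Banach space $(L^c_t,c)$, the contraction estimate being what secures the required weak* compactness.

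For Parts 1 and 2, I would first record that the estimate \eqref{eqrho1} established for $\rho_{st}$ in the proof of Theorem \ref{thmextrho} extends, by $c$-continuity and the density of $L_\infty(\F_t)$ in $L^c_t$, to the extension, giving $|\tilde\rho_{st}(Z)-\tilde\rho_{st}(W)| \leq \esssup_{R\in\tilde{\cal P}_{st}}E_R(|Z-W|\,|\F_s)$ for all $Z,W\in L^c_t$. Applying this with $Z=g-X$ and $W=g-Y$ makes the right-hand side independent of $g$, since $|Z-W|=|X-Y|$, and the elementary bound $|\essinf_g a_g - \essinf_g b_g|\leq \esssup_g|a_g-b_g|$ yields $|x_{st}(X)-x_{st}(Y)|\leq \esssup_{R\in\tilde{\cal P}_{st}}E_R(|X-Y|\,|\F_s)$. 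Taking $c$ and invoking the lattice property together with Proposition \ref{propcomp}, exactly as in Theorem \ref{thmextrho}, gives \eqref{eqceqcontx}. Since $x_{st}(0)=0$, the choice $Y=0$ shows $c(|x_{st}(X)|)\leq c(X)<\infty$, so $x_{st}(X)\in L^c_s$ and $x_{st}$ is well defined; it extends the operator of Definition \ref{defRIP} because on $L_\infty(\F_t)$ one has $\tilde\rho_{st}=\rho_{st}$ and the formula reduces to \eqref{eqRIP2} (and then, by $c$-continuity, to the $L_p$-operator). Monotonicity, convexity and the projection property are proved as in Proposition \ref{ri-prop}, now using the monotonicity, convexity and $\F_s$-translation invariance of $\tilde\rho_{st}$ recorded in Corollary \ref{cortildero} and the convexity of ${\cal C}^\infty_{st}$; the projection property uses translation invariance extended to $f\in L^c_s$ by density.

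For Part 3 I would fix $Q_0\in{\cal Q}$ and scalarise, setting $\Psi(X):=E_{Q_0}[x_{st}(X)]$ for $X\in L^c_t$. By Part 2 and \eqref{eqceqcontx}, $\Psi$ is a real-valued, convex, monotone functional on $(L^c_t,c)$ that is $1$-Lipschitz, since $|\Psi(X)-\Psi(Y)|\leq E_{Q_0}|x_{st}(X)-x_{st}(Y)|\leq c(|X-Y|)$. A continuous convex functional on a Banach space is subdifferentiable and admits the representation $\Psi(X)=\sup_{\ell}(\ell(X)-\Psi^*(\ell))$ over $\ell\in (L^c_t)^*$ with every subgradient of dual norm $\leq 1$; by Banach--Alaoglu the relevant functionals lie in a weak* compact set. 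The structural identifications then come from the properties of $x_{st}$: monotonicity forces $\ell\geq 0$, $\Psi(1)=1$ forces $\ell(1)=1$, the covariance $\Psi(X+f)=\Psi(X)+E_{Q_0}[f]$ for $f\in L^c_s$ forces $\ell|_{\F_s}=Q_0$, and $c$-continuity with $c(X)=0\Leftrightarrow X=0$ $P$-a.s. forces $\ell$ to vanish on $P$-null sets. Hence each $\ell$ is represented by a probability measure $R\ll P$ with $R|_{\F_s}=Q_0$, and ${\cal K}$ is the resulting weak* compact set of such measures. Lifting the scalar identity to the conditional one uses the weak $\F_s$-homogeneity of $x_{st}$: testing against indicators $1_A$, $A\in\F_s$, localises $\Psi$ and produces $x_{st}(X)=\esssup_{R\in{\cal K}}(E_R(X|\F_s)-\gamma_{st}(R))$, first on $L_\infty(\F_t)$ (where it also matches Proposition \ref{Fatouinf} after renormalising the $\F_s$-marginal from $P$ to $Q_0$, which leaves conditional expectations unchanged) and then on all of $L^c_t$ by $c$-density and the uniform $c$-Lipschitz continuity of both sides. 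The argmax in \eqref{eqrepx2} follows because the family $\{E_R(X|\F_s)-\gamma_{st}(R):R\in{\cal K}\}$ is upward directed, so its essential supremum is an increasing limit, and the weak* compactness of ${\cal K}$ with the weak* upper semicontinuity of $R\mapsto E_R(X|\F_s)-\gamma_{st}(R)$ selects a maximiser in ${\cal K}$.

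I expect Part 3 to be the crux, and within it two points are delicate: first, identifying the abstract dual functionals on $L^c_t$ with genuine $P$-absolutely-continuous probability measures normalised to $Q_0$ on $\F_s$, where the definition of $c$ through ${\cal Q}$ and the inclusions of Lemma \ref{Lemma L^c} must be used to control densities and absolute continuity; second, passing from the scalar Fenchel--Moreau supremum to the conditional essential supremum with an \emph{attained} maximiser, which requires marrying the upward-directed lattice structure of the penalised family with the Banach--Alaoglu compactness of ${\cal K}$. Parts 1 and 2, by contrast, are routine once the uniform-in-$g$ bound inherited from Theorem \ref{thmextrho} is in hand.
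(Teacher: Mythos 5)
Your Parts 1 and 2 follow the paper's own proof almost verbatim: the contraction estimate is read off the representation \eqref{Rep4} of $\tilde\rho_{st}$, the bound $|x_{st}(X)-x_{st}(Y)|\leq \esssup_{R\in\tilde{\cal P}_{st}}E_R(|X-Y|\,|\F_s)$ is uniform in $g$, and \eqref{eqceqcontx} then follows from Lemma \ref{LemmaER} item 1 and Proposition \ref{propcomp}; the structural properties come from Corollary \ref{cortildero} as in Proposition \ref{ri-prop}. One imprecision: you deduce $x_{st}(X)\in L^c_s$ from $c(|x_{st}(X)|)\leq c(X)<\infty$, but $L^c_s$ is the $c$-completion of $L_\infty(\F_s)$, not the set of $\F_s$-measurable variables with finite $c$-seminorm, so finiteness alone proves nothing. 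The paper's argument is the correct one: $x_{st}$ maps $L_\infty(\F_t)$ into $L_\infty(\F_s)\subset L^c_s$, and then \eqref{eqceqcontx}, the density of $L_\infty(\F_t)$ in $L^c_t$ and the completeness of $L^c_s$ give membership for general $X\in L^c_t$. You have all the ingredients, so this is a local fix.

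The genuine gap is in Part 3. Your scalarisation $\Psi(X)=E_{Q_0}[x_{st}(X)]$ is exactly the paper's first move, but the paper then invokes Proposition 3.1 and Theorem 3.2 of \cite{BNK}, which deliver in one stroke the three hard facts: the representation of $\Psi$ by \emph{$\sigma$-additive} probability measures, the weak* compactness of the representing set ${\cal K}$, and the attainment of the maximum; the passage to the conditional statements \eqref{eqrepx}--\eqref{eqrepx2} is then the standard localisation argument of \cite{DS2005}, as you propose. Your attempt to re-derive the \cite{BNK} input via Fenchel--Moreau and Banach--Alaoglu breaks at the identification step: positivity, $\ell(1)=1$ and ``vanishing on $P$-null sets'' only produce a \emph{finitely additive} set function, and a finitely additive functional has no conditional expectation and cannot enter \eqref{eqrepx}. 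Vanishing on null sets is far weaker than $\sigma$-additivity (Banach-limit-type functionals vanish on null sets), so the step ``hence each $\ell$ is represented by a probability measure $R\ll P$'' fails as justified. In the present setting countable additivity \emph{can} be recovered, but only through an argument you never give: by Proposition \ref{prop1}, ${\cal Q}\subseteq{\cal B}^K$, so the densities $dQ/dP$ are bounded in $L_q$ and hence uniformly integrable; therefore for bounded $X_n\downarrow 0$ one has $c(X_n)=\sup_{Q\in{\cal Q}}E_Q(X_n)\to 0$, and $c$-continuity of $\ell$ forces $\ell(X_n)\to 0$, which is the needed order continuity. Without this (or the citation of \cite{BNK}), the duality route does not close. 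A further, smaller, flaw: ``weak* upper semicontinuity of $R\mapsto E_R(X|\F_s)-\gamma_{st}(R)$'' is not meaningful, since this map is random-variable valued; the standard fix, and what the paper's ``standard arguments'' refer to, is to obtain attainment for the scalar problem and transfer it to the conditional one via the minimal-penalty inequality $E_{Q_X}(X|\F_s)-\gamma_{st}(Q_X)\leq x_{st}(X)$, whose expectations under $Q_0$ coincide.
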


\begin{proof}\hspace{1cm}
\begin{enumerate}
\item
From (\ref{Rep4}) in Theorem  \ref{thmextrho} we have that 
 $$
 |x_{st}(X)-x_{st}(Y)| \leq \underset{R \in \tilde {\cal P}_{st}}\esssup E_R(|X-Y||{\cal F}_s),
\quad X,Y \in L^c_t. 
 $$
Then equation (\ref{eqceqcontx}) follows from Lemma \ref{LemmaER} item 1 and Proposition \ref{propcomp}.

Recall that $x_{st}(X) \in L_{\infty}({\cal F}_s) \subset L^c_s$, for all $X \in L_{\infty}({\cal F}_t)$. From the continuity of $x_{st}$ with respect to the c norm we conclude that $x_{st}(X) \in L^c_s$, for all $X \in L^c_t$.

Finally, Definition \ref{defextx} of $x_{st}$ extends the one  given in Definition \ref{defRIP} (see also \eqref{eqRIP2}), because $\tilde \rho_{st}$ is the extension of $\rho_{st}$ to $L^c_t$.
\item
Statement 2 follows directly from the properties of $\tilde \rho_{st}$, see Corollary \ref{cortildero}, in the same lines as Proposition \ref{ri-prop}.
\item
Let $Q_0 \in {\cal Q}$. 
The map $E_{Q_0}(x_{st}(X))$, $X \in L^c_t$, is up to a minus sign a normalised convex risk measure on $L^c_t$ with values in $\R$ majorized by $\sup_{Q \in {\cal Q}} E_Q(X)$.
We refer to Proposition 3.1 and Theorem 3.2 in \cite{BNK} to prove the existence of a set ${\cal K}$ of probability measures in the dual of $L^c_t$, compact for the weak* topology, such that 
 \begin{eqnarray*}
  E_{Q_0}(x_{st}(X))&=&\sup_{R \in {\cal K}} \,(E_R(X)-\gamma(R))\nonumber\\
  &=& E_{Q_X}(X)-\gamma(Q_X)
 \end{eqnarray*}
for some $Q_X \in {\cal K}$ (depending on $X$). 
Thus the representations (\ref{eqrepx}) and (\ref{eqrepx2}) follow from standard arguments, 
see e.g. \cite{DS2005}.
\end{enumerate}
The proof is complete.
\end{proof}

\bigskip
Now we are ready to discuss the Fatou property of a risk-indifferent evaluation, as given in Definition \ref{defextx}. 
This brings to the following natural definition.

\begin{defn}
For any $s \leq t$, an operator $x:L^c_t \rightarrow L^c_s$ has \emph{the Fatou property on $L^c_t$} if for any sequence $(X_n)_n \in L^c_t$, dominated in $L^c_t$, and converging $P$-a.s. to $X  \in  L^c_t$, we have
 \begin{equation}
 x(X) \leq \underset{n \rightarrow \infty}\liminf \;x(X_n).
  \label{eqFatou}
 \end{equation}
Here above $(X_n)_n$ \emph{dominated in $L^c_t$} means that there is $Y \in L^c_t$ such that $|X_n| \leq Y$  $P$-a.s. for all $n$.
\end{defn}

\begin{proposition}
Set $p\in [1,\infty)$.
Let $(\rho_{st})_{s,t}$ be a  strong time-consistent, fully-dynamic risk measure on $(L_p({\cal F}_t))_t$ such that $\rho_{0T}$ is 
dominated and  sensitive.
For all $s \leq t$, the operator $x_{st}$ in Definition \ref{defextx} has the Fatou property on $L^c_t$.
 \label{propFatou}
\end{proposition}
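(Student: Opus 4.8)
The plan is to bypass the essential infimum in Definition \ref{defextx} entirely and argue directly from the dual representation of $x_{st}$ obtained in Proposition \ref{propxL}, item 3, since that representation writes $x_{st}$ as a supremum of affine conditional functionals, which is exactly the structure from which a Fatou property follows. Recall that, after fixing $Q_0 \in {\cal Q}$, we have
\[
x_{st}(X)=\underset{R\in {\cal K}}{\esssup}\,\big(E_R(X|{\cal F}_s)-\gamma_{st}(R)\big),\qquad X\in L^c_t,
\]
where every $R\in {\cal K}$ is absolutely continuous with respect to $P$, satisfies $R_{|{\cal F}_s}=Q_0$, and defines a continuous linear functional on $L^c_t$.

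First I would fix $R\in {\cal K}$ together with a dominated sequence $(X_n)_n$ in $L^c_t$, say $|X_n|\leq Y$ with $Y\in L^c_t$, converging $P$-a.s. to $X\in L^c_t$. Since the map $X\mapsto |X|$ is $c$-Lipschitz on essentially bounded random variables (because $\big|\,|X|-|X'|\,\big|\leq |X-X'|$ pointwise) it extends continuously to $L^c_t$, so $|Y|\in L^c_t$; as $R$ is continuous on $L^c_t$, this gives $E_R(|Y|)\leq C_R\,c(Y)<\infty$, i.e. $Y\in L_1({\cal F}_t,R)$. Because $R\ll P$, the $P$-a.s. convergence $X_n\to X$ also holds $R$-a.s., and the dominated convergence theorem for conditional expectations yields $E_R(X_n|{\cal F}_s)\to E_R(X|{\cal F}_s)$ $R$-a.s.

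The point I want to stress is the passage from $R$-a.s. to $P$-a.s. convergence, which is the genuinely delicate step. The random variables $E_R(X_n|{\cal F}_s)$ and $E_R(X|{\cal F}_s)$ are ${\cal F}_s$-measurable, so the set on which convergence fails is an ${\cal F}_s$-measurable set of zero $R$-measure; since $R_{|{\cal F}_s}=Q_0\sim P$ on ${\cal F}_s$, that set also has zero $P$-measure, and hence $E_R(X_n|{\cal F}_s)\to E_R(X|{\cal F}_s)$ $P$-a.s. From the representation we have $x_{st}(X_n)\geq E_R(X_n|{\cal F}_s)-\gamma_{st}(R)$ $P$-a.s., so taking $\liminf_n$ and using the convergence just established gives
\[
\underset{n\to\infty}{\liminf}\,x_{st}(X_n)\;\geq\; E_R(X|{\cal F}_s)-\gamma_{st}(R)\qquad P\text{-a.s.}
\]

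Finally, the left-hand side does not depend on $R$, so I would take the essential supremum over $R\in {\cal K}$ on the right and invoke the dual representation once more to conclude
\[
\underset{n\to\infty}{\liminf}\,x_{st}(X_n)\;\geq\;\underset{R\in {\cal K}}{\esssup}\,\big(E_R(X|{\cal F}_s)-\gamma_{st}(R)\big)\;=\;x_{st}(X)\qquad P\text{-a.s.},
\]
which is precisely \eqref{eqFatou}. The main obstacle is exactly the $R$-a.s. versus $P$-a.s. matching together with the integrability $Y\in L_1(R)$ needed to apply conditional dominated convergence; both are settled by the structural facts on ${\cal K}$ and $L^c_t$ recorded in Proposition \ref{propxL} and Lemma \ref{Lemma L^c}, so that the remainder of the argument is the routine supremum-of-affine-functionals reasoning.
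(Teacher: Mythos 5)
Your proof is correct, and it runs on the same engine as the paper's --- the dual representation of $x_{st}$ from Proposition \ref{propxL} item 3 combined with dominated convergence --- but it uses that representation in a genuinely different way. The paper invokes the \emph{attainment} part, equation \eqref{eqrepx2}: it picks the maximizing measure $Q_X \in {\cal K}$ for the limit $X$ (this is where the weak* compactness of ${\cal K}$ enters), writes $x_{st}(X) = E_{Q_X}(X|\F_s) - \gamma_{st}(Q_X)$, passes to the limit along the increasing sequence $\tilde X_n := \inf_{k\geq n} X_k$ under that single measure, and then bounds $E_{Q_X}(X_k|\F_s) - \gamma_{st}(Q_X) \leq x_{st}(X_k)$. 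You never use attainment: you work with an arbitrary $R \in {\cal K}$, obtain $\liminf_n x_{st}(X_n) \geq E_R(X|\F_s) - \gamma_{st}(R)$ $P$-a.s.\ for each $R$, and close with the defining property of the essential supremum (valid for uncountable families, since the esssup is the least a.s.\ upper bound, so the $R$-dependent null sets cause no harm). What each route buys: the paper's argument pins everything to one measure, so a single integrability and null-set check suffices; yours is the classical lower-semicontinuity-of-a-supremum argument, which would survive even if the supremum in \eqref{eqrepx} were not attained, at the price of the (routine) esssup step and of carrying the integrability $Y \in L_1(\F_t,R)$ and the domination for every $R \in {\cal K}$. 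On one point you are actually more careful than the paper: the observation that the failure set of the conditional convergence is $\F_s$-measurable and hence $P$-null because $R_{\vert \F_s} = Q_0 \sim P$ is needed in the paper's proof as well (dominated convergence under $Q_X$ a priori gives only $Q_X$-a.s.\ convergence), but is left implicit there.
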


\begin{proof}
Let $(X_k)_k \in L^c_t$ be dominated in $L^c_t$ by $Y$ and converge $P$-a.s. to $X\in L^c_t$. 
Set  $\tilde X_n:=\underset{k \geq n}\inf X_k$.
The sequence $(\tilde X_n)_n$ is increasing, $|\tilde X_n| \leq Y $, and 
 $X=\underset{n \rightarrow \infty}\lim \tilde X_n$ $P$-a.s.
 From Proposition \ref{propxL}, for all $X\in L^c_t$, there is a probability measure $Q_X$ in the dual of $L^c_t$  such that $x_{st}(X)=E_{Q_X}(X|{\cal F}_s)-\gamma_{st}(Q_X)$.
Then, by the dominated convergence theorem, we obtain that 
 \begin{align}
 x_{st}(X)&= \underset{n \rightarrow \infty}\lim \Big(E_{Q_X}(\tilde X_n|{\cal F}_s)-\gamma_{st}(Q_X)\Big) \notag \\
 &\leq 
 \underset{n \rightarrow \infty}\lim \underset{k \geq n}\inf \Big(E_{Q_X}( X_k|{\cal F}_s)-\gamma_{st}(Q_X)\Big).
 \label{eqFat1}
 \end{align}
 Observe that, for all $k$, $E_{Q_X}( X_k|{\cal F}_s)-\gamma_{st}(Q_X) \leq x_{st}(X_k)$.
Then from (\ref{eqFat1}) we get that
 $$x_{st}(X) \leq \underset{n \rightarrow \infty}\lim \underset{k \geq n}\inf x_{st}(X_k)=
 \underset{n \rightarrow \infty}\liminf x_{st}(X_n).$$
 This proves the Fatou property.
\end{proof}

\bigskip
\subsection{Risk-indifference price system $(x_{st})_{s,t}$}

At this stage we have studied the properties of $x_{st}$ defined on $L^c_t$ with values in $L^c_s$. 
We now study the time-consistency of the family $(x_{st})_{s,t}$.

\begin{lemma}
 Let $0 \leq r \leq s\leq t$. 
 Every  $g \in \mathcal{C}^{\infty}_{rt}$ can be written $g=g_1+g_2$ for some $g_1 \in \mathcal{C}^{\infty}_{rs}$ and $g_2 \in \mathcal{C}^{\infty}_{st}$ .
\label{lemmasumC}
 \end{lemma}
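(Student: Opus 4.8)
The plan is to exploit the additivity of the value process over adjacent time intervals together with an explicit truncation that keeps both pieces essentially bounded. First I would fix $g \in \mathcal{C}^{\infty}_{rt}$ and pick, by Definition \ref{strategiesC}, a strategy $\theta \in \Theta$ with $g \leq Y_{rt}(\theta)$. The value process is additive over the partition $(r,s]\cup(s,t]=(r,t]$, so $Y_{rt}(\theta)=Y_{rs}(\theta)+Y_{st}(\theta)$, where by Definition \ref{all-strategies} the random variable $Y_{rs}(\theta)$ is $\F_s$-measurable and $Y_{st}(\theta)$ is $\F_t$-measurable, and both are bounded from below, say $Y_{rs}(\theta)\geq -C_1$ and $Y_{st}(\theta)\geq -C_2$ for constants $C_1,C_2>0$. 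Let $M:=\Vert g\Vert_\infty$.

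The candidate decomposition I would use is the explicit pair
$$
g_1:=\min\big(M+C_2,\,Y_{rs}(\theta)\big),\qquad g_2:=g-g_1 .
$$
By construction $g_1$ is $\F_s$-measurable and $-C_1\leq g_1\leq M+C_2$, so $g_1\in L_\infty(\F_s)$; moreover $g_1\leq Y_{rs}(\theta)$, so $\theta$ witnesses $g_1\in\mathcal{C}^{\infty}_{rs}$. For the second piece, the inequalities $g-Y_{st}(\theta)\leq M+C_2$ (from $g\leq M$ and $Y_{st}(\theta)\geq -C_2$) and $g-Y_{st}(\theta)\leq Y_{rs}(\theta)$ (from $g\leq Y_{rt}(\theta)$) together give $g-Y_{st}(\theta)\leq g_1$, i.e. $g_2=g-g_1\leq Y_{st}(\theta)$. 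Since $g_2$ is $\F_t$-measurable and bounded, being the difference of two bounded random variables, the same $\theta$ witnesses $g_2\in\mathcal{C}^{\infty}_{st}$. As $g=g_1+g_2$, this would prove the claim.

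The only genuine obstacle is the $L_\infty$ requirement built into the superscript $\infty$: the value processes $Y_{rs}(\theta)$ and $Y_{st}(\theta)$ are guaranteed bounded only from below, so a naive split such as $g_1=Y_{rs}(\theta)$, $g_2=g-Y_{rs}(\theta)$ fails to be essentially bounded. Truncating $Y_{rs}(\theta)$ from above at the constant level $M+C_2$ repairs this, and the point to verify is that this level is high enough to preserve the domination $g_1\geq g-Y_{st}(\theta)$ needed to keep $g_2$ feasible on $(s,t]$; the two elementary inequalities above confirm exactly that. I note that the stability property of $\Theta$ is not required here, and that a single strategy $\theta$ serves both intervals.
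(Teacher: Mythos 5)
Your proof is correct and follows essentially the same route as the paper's: both truncate $Y_{rs}(\theta)$ from above at a level of the form $\Vert g\Vert_\infty$ plus a lower bound for $Y_{st}(\theta)$ (the paper uses a single common lower bound $M$ and sets $g_1=\inf(Y_{rs}(\theta),\Vert g\Vert_\infty-M)$), then set $g_2=g-g_1$. The only cosmetic difference is in the verification that $g_2\leq Y_{st}(\theta)$: you check $g-Y_{st}(\theta)\leq g_1$ directly, while the paper rewrites $g_2$ as a supremum of two terms and bounds each one, which is the same computation.
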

 
\begin{proof}
 Let $\theta \in \Theta$ such that $g \leq Y_{rt}(\theta)=Y_{rs}(\theta)+Y_{st}(\theta)$. 
 Let $M \in \R$ such that $M \leq g$, $M \leq Y_{rs}(\theta)$,  $M \leq Y_{st}(\theta)$. 
 Let 
 $$
 g_1 := \inf \big(Y_{rs}(\theta),||g||_{\infty}-M\big).
 $$
 We remark that $g_1 \in L_{\infty}({\cal F}_s)$ and $g_1 \leq Y_{rs}(\theta)$. 
 Let $g_2 := g-g_1$, clearly $g_2 \in L_{\infty}({\cal F}_t)$.
Then we can see that $g_2 = \sup \big(g-Y_{rs}(\theta),g-||g||_{\infty}+M\big)$. 
Hence, we obtain that 
 $g_2\leq \sup(Y_{st}(\theta), M)=Y_{st}(\theta)$.
\end{proof}

\begin{lemma}
\label{propwtc}
Let $p \in [1,\infty]$. 
Let $(\rho_{st})_{s,t}$ be a strong time-consistent fully-dynamic risk measure on $(L_p({\cal F}_t))_t$.
Let $r \leq s\leq t$. 
Consider the operator $x_{st}(X)$, $X\in L_p(\F_t)$ defined $P$-a.s. as in \eqref{eqRIP}.
For any $X,Y \in L_p({\cal F}_t)$, assume that $x_{st}(X) \geq x_{st}(Y)$ $P-a.s.$ 
Then $x_{rt}(X)  \geq x_{rt}(Y)$.
\end{lemma}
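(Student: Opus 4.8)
The plan is to establish the order-preserving inequality by rewriting $x_{rt}$ as a \emph{nested} expression built from $\rho_{rs}$, $\rho_{st}$ and the inner risk-indifference quantity at the pair $(s,t)$, and then to conclude by monotonicity. Throughout I would work with the equivalent formula \eqref{eqRIP2}, so that all essential infima run over the sets $\mathcal{C}^\infty_{\cdot\cdot}$ of \emph{bounded} feasible claims. Since the term $\underset{g\in\mathcal{C}^\infty_{rt}}{\essinf}\rho_{rt}(g)$ is common to $x_{rt}(X)$ and $x_{rt}(Y)$, it cancels in the difference, so it suffices to prove
\[
\underset{g\in\mathcal{C}^\infty_{rt}}{\essinf}\rho_{rt}(g-X)\ \geq\ \underset{g\in\mathcal{C}^\infty_{rt}}{\essinf}\rho_{rt}(g-Y).
\]

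First I would rewrite the left-hand side by splitting feasible claims. By Lemma \ref{lemmasumC} every $g\in\mathcal{C}^\infty_{rt}$ decomposes as $g=g_1+g_2$ with $g_1\in\mathcal{C}^\infty_{rs}$ and $g_2\in\mathcal{C}^\infty_{st}$, while the stability property of $\Theta$ (Definition \ref{strategies}) guarantees conversely that any such sum lies in $\mathcal{C}^\infty_{rt}$; hence the infimum over $g$ equals the joint infimum over pairs $(g_1,g_2)$. Using the strong time-consistency $\rho_{rt}(\cdot)=\rho_{rs}(-\rho_{st}(\cdot))$ together with the $\F_s$-translation invariance of $\rho_{st}$ (which lets me extract the bounded, $\F_s$-measurable $g_1$ from the inner risk measure), I obtain the pointwise identity
\[
\rho_{rt}(g-X)=\rho_{rs}\big(g_1-\rho_{st}(g_2-X)\big),\qquad g=g_1+g_2.
\]
I stress that $g_1$ \emph{cannot} be pulled out of $\rho_{rs}$, since $\rho_{rs}$ is only $\F_r$-translation invariant; this is exactly what keeps $x_{rt}$ genuinely different from $x_{st}$.

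Next I would perform the infimum over $g_2$ for fixed $g_1$. The family $\{\rho_{st}(g_2-X):g_2\in\mathcal{C}^\infty_{st}\}$ is downward directed (the lattice property following from the stability of $\Theta$), so its essential infimum $x_{st}(X)+\mu_{st}$, with $\mu_{st}:=\underset{g_2\in\mathcal{C}^\infty_{st}}{\essinf}\rho_{st}(g_2)$, is attained as a decreasing limit along a sequence $g_2^{(n)}$. Then $g_1-\rho_{st}(g_2^{(n)}-X)$ increases to $g_1-(x_{st}(X)+\mu_{st})$, and continuity from below of $\rho_{rs}$ (automatic for $p<\infty$, assumed for $p=\infty$) gives
\[
\underset{g_2\in\mathcal{C}^\infty_{st}}{\essinf}\rho_{rs}\big(g_1-\rho_{st}(g_2-X)\big)=\rho_{rs}\big(g_1-(x_{st}(X)+\mu_{st})\big).
\]
This interchange of the essential infimum with $\rho_{rs}$ is the technical heart of the argument and the step I expect to be the main obstacle: it needs both the directedness of the inner family and the continuity from below of the outer risk measure, and it requires some care about the fact that the limiting argument $x_{st}(X)+\mu_{st}$ need not lie in $L_p(\F_s)$, so that the identity is to be read at the level of $P$-a.s.\ defined quantities extended by monotone limits.

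Finally I would compare $X$ and $Y$. The hypothesis $x_{st}(X)\geq x_{st}(Y)$ yields $x_{st}(X)+\mu_{st}\geq x_{st}(Y)+\mu_{st}$, so for every $g_1\in\mathcal{C}^\infty_{rs}$ the monotonicity of $\rho_{rs}$ gives
\[
\rho_{rs}\big(g_1-(x_{st}(X)+\mu_{st})\big)\ \geq\ \rho_{rs}\big(g_1-(x_{st}(Y)+\mu_{st})\big).
\]
Taking the essential infimum over $g_1$ and combining with the two previous displays produces $\underset{g}{\essinf}\rho_{rt}(g-X)\geq\underset{g}{\essinf}\rho_{rt}(g-Y)$, whence $x_{rt}(X)\geq x_{rt}(Y)$, as required.
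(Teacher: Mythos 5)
Your proposal is correct and follows essentially the same route as the paper's proof: reduction to comparing $\underset{g}{\essinf}\,\rho_{rt}(g-X)$ with $\underset{g}{\essinf}\,\rho_{rt}(g-Y)$, the decomposition $g=g_1+g_2$ via Lemma \ref{lemmasumC}, the identity $\rho_{rt}(g-X)=\rho_{rs}\big(g_1-\rho_{st}(g_2-X)\big)$ from strong time-consistency, and the interchange of the inner essential infimum with $\rho_{rs}$ using directedness and continuity from below. The "care about the limiting argument not lying in $L_p(\F_s)$" that you flag as the technical heart is precisely what the paper formalises by constructing the extension $\bar\rho_{rs}$ on the set $D_s$ of $\F_s$-measurable increasing $P$-a.s.\ limits of $L_p(\F_s)$ elements, checking its well-posedness and monotonicity.
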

Observe that the operator in Lemma \ref{propwtc} would be a risk-indifference price if defined on $Dom \, x_{st}$, see Definition \ref{defRIP}.

\begin{proof}
For the proof we deal with both $L_p$-convergence and $P-a.s.$ convergence. For this reason we work with an extension of the risk measure 
$\rho_{rs}$ from $L_p(\F_s)$ to the set
$$
D_s:= \{ \F_s- \textrm{measurable } X\,: \exists (X_n)_n \in L_p(\F_s) \,s.t.\, X_n \uparrow X \:P-a.s \}.
$$
We define this extension as the monotone limit
$$
\bar \rho_{rs}(X) := \lim_{n \to \infty} \rho_{rs} (X_n) \quad P-a.s.
$$
First of all we show that the definition is well-posed. 
We consider two sequences $(X_n)_n$ and $(Y_n)_n$ in $L_p(\F_s)$ such that both $X_n \uparrow X$ and $Y_n \uparrow X$ $P-a.s.$, where $X$ is $\F_s$-measurable. 
We denote $A := \lim_{n\to\infty} \rho_{rs}(X_n)$ and $B:= \lim_{n\to \infty} \rho_{rs}(Y_n)$ $P-a.s.$
Then for any constant $M$, we would have
$X_n \wedge M  \uparrow X \wedge M$ and also $Y_n \wedge M  \uparrow X\wedge M$ in the $L_p$-convergence.
Observe that $X\wedge M \in L_p(\F_s)$ and $\rho_{rs}$ is continuous from below, thus we have
$$
\inf_M \inf_n \rho_{rs} (X_n\wedge M) = \inf_M  \rho_{rs} (X\wedge M) = \inf_M \inf_n \rho_{rs} (Y_n\wedge M).
$$
Hence $A= \bar\rho_{rs}(X) = B$.  
It is also clear that $\bar\rho_{rs}$ is monotone and continuous $P$-a.s. from below on $D$.

We now proceed with the proof of the statement.
Let $X,Y \in L_p({\cal F}_t)$ such that $x_{st}(X) \geq x_{st}(Y)$.
From (\ref{eqRIP2}) and assumption (\ref{techass}),  we have that 
\begin{equation}
\underset{g \in {\cal C}^{\infty}_{st}}{\essinf}\: \rho_{st}(g-X) \geq 
\underset{g \in {\cal C}^{\infty}_{st}}{\essinf}\: \rho_{st}(g-Y).
\label{eqwtc1}
\end{equation}
From Lemma \ref{lemmasumC}, any $g \in {\cal C}^{\infty}_{rt}$ is the sum $g= g_1+g_2$ of $g_1\in {\cal C}^{\infty}_{rs}$ and 
$g_2\in {\cal C}^{\infty}_{st}$. 
First of all from \eqref{eqwtc1} and the lattice property of $\{\rho_{st}(g-X),\; g \in {\cal C}^{\infty}_{st}\}$ we observe
that $g_1-\underset{g_2 \in {\cal C}^{\infty}_{st}}{\essinf} \:\rho_{st}(g_2-X)$ belongs to $D_s$. 
Then we can apply the extension $\bar\rho_{rs}$ of $\rho_{rs}$ and from its monotonicity we obtain that 
\begin{equation}
\label{eqstar1}
\bar  \rho_{rs}(g_1- \underset{g_2 \in {\cal C}^{\infty}_{st}}{\essinf} \:\rho_{st}(g_2-X)
 \geq 
 \bar \rho_{rs}(g_1- \underset{g_2 \in {\cal C}^{\infty}_{st}}{\essinf} \:\rho_{st}(g_2-Y), 
\end{equation}
for all $g_1$ in ${\cal C}^{\infty}_{rs}$.

On the other hand the strong time-consistency of $(\rho_{st})_{s,t}$  yields
\begin{equation}
\underset{g \in {\cal C}^{\infty}_{rt}}{\essinf}  \rho_{rt}(g-X) 
= \underset{\substack{{g_1 \in {\cal C}^{\infty}_{rs} }\\ 
{g_2 \in {\cal C}^{\infty}_{st}} }}{\essinf} \:\rho_{rs}(g_1- \rho_{st}(g_2-X)).
\label{eqg1g2}
\end{equation}
Now, let us consider a sequence $(h_n)_n$ such that $-  \essinf_{g \in {\cal C}^\infty_{st}} \rho_{st}(g-X)$ is the 
increasing limit of $- \rho_{st}(h_n-X)$. 
Then $- \essinf_{g \in {\cal C}^{\infty}_{st}}  \rho_{st}(g-X)$ belongs to $D_s$.
Then we have
\begin{equation*}
\bar\rho_{rs}(g_1- \underset{g_2 \in {\cal C}^{p}_{st}}{\essinf}\: \rho_{st}(g_{2}-X)) 
= \lim_{n\to\infty} \rho_{rs}(g_1-  \rho_{st}(h_{n}-X)),
\end{equation*}
from which we obtain
\begin{equation*}
\bar\rho_{rs}(g_1- \underset{g_2 \in {\cal C}^{p}_{st}}{\essinf}\: \rho_{st}(g_{2}-X)) = \underset{g_2 \in {\cal C}^{\infty}_{st}}{\essinf} \: \: \rho_{rs}(g_1- \rho_{st}(g_{2}-X).
\end{equation*}
So from equations \eqref{eqg1g2} and \eqref{eqstar1}, we get 
$$
\underset{g \in {\cal C}^{\infty}_{rt}}{\essinf}  \rho_{rt}(g-X) 
=
\underset{g_1 \in {\cal C}^{\infty}_{rs} }{\essinf} (\bar \rho_{rs}(g_1- 
\underset{g_2 \in {\cal C}^{\infty}_{st}}{\essinf}\: \rho_{st}(g_{2}-X))) \geq 
    \underset{g \in {\cal C}^{\infty}_{rt}}{\essinf}  \rho_{rt}(g-Y) .
$$
The result follows.
\end{proof}

\begin{proposition}
Fix $p \in [1,\infty]$. Let $(\rho_{st})_{s,t}$ be a strong time-consistent fully-dynamic risk measure on $(L_p({\cal F}_t))_t$. 
Then
\begin{equation}
x_{rt}(x_{st}(X))=x_{rt}(X),\qquad X \in L_{\infty}({\cal F}_t).
\label{eqxwtc}
\end{equation}
\label{propTC}
\end{proposition}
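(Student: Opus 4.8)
The plan is to reduce the composition identity \eqref{eqxwtc} to a statement about the ``value part'' of the risk-indifference price, and then exploit the $\F_s$-translation invariance of $\rho_{st}$ together with the machinery already developed for Lemma \ref{propwtc}. Observe first that, by Lemma \ref{Linf}, $x_{st}(X) \in L_\infty(\F_s) \subseteq L_\infty(\F_t)$, so $x_{rt}(x_{st}(X))$ is well-defined inside the $L_\infty$ framework and every essinf below makes sense. Writing $A_{uv}(Z) := \essinf_{g \in {\cal C}^\infty_{uv}} \rho_{uv}(g - Z)$, formula \eqref{eqRIP2} becomes $x_{uv}(Z) = A_{uv}(Z) - A_{uv}(0)$. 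Since the constant $A_{rt}(0)$ is common to $x_{rt}(x_{st}(X))$ and $x_{rt}(X)$, the claim is equivalent to the single identity $A_{rt}(x_{st}(X)) = A_{rt}(X)$.

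The key structural fact I would establish is that $A_{rt}(Z)$ depends on $Z$ only through $A_{st}(Z)$. Arguing as in the proof of Lemma \ref{propwtc}: by Lemma \ref{lemmasumC} every $g \in {\cal C}^\infty_{rt}$ splits as $g_1 + g_2$ with $g_1 \in {\cal C}^\infty_{rs}$ and $g_2 \in {\cal C}^\infty_{st}$, and the strong time-consistency of $(\rho_{st})_{s,t}$ produces \eqref{eqg1g2}, namely $A_{rt}(Z) = \essinf_{g_1, g_2} \rho_{rs}(g_1 - \rho_{st}(g_2 - Z))$. Using the extension $\bar\rho_{rs}$ of $\rho_{rs}$ to the set $D_s$ and its continuity from below, the inner infimum over $g_2$ collapses to $\bar\rho_{rs}(g_1 - A_{st}(Z))$, because $-A_{st}(Z)$ is the increasing $P$-a.s. limit of a sequence $-\rho_{st}(h_n - Z)$ supplied by the lattice property of $\{\rho_{st}(g_2 - Z)\}$. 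Thus $A_{rt}(Z) = \essinf_{g_1 \in {\cal C}^\infty_{rs}} \bar\rho_{rs}(g_1 - A_{st}(Z))$, an expression in which $Z$ enters only via $A_{st}(Z)$.

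It then remains to verify $A_{st}(x_{st}(X)) = A_{st}(X)$, and this is where the $\F_s$-measurability of $x_{st}(X)$ is decisive. By the $\F_s$-translation invariance of $\rho_{st}$,
$$
A_{st}(x_{st}(X)) = \essinf_{g_2 \in {\cal C}^\infty_{st}} \rho_{st}\big(g_2 - x_{st}(X)\big) = \essinf_{g_2 \in {\cal C}^\infty_{st}} \big(\rho_{st}(g_2) + x_{st}(X)\big) = A_{st}(0) + x_{st}(X),
$$
and substituting $x_{st}(X) = A_{st}(X) - A_{st}(0)$ gives $A_{st}(x_{st}(X)) = A_{st}(X)$. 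Inserting this into the formula of the previous paragraph for $Z = x_{st}(X)$ and for $Z = X$ yields $A_{rt}(x_{st}(X)) = A_{rt}(X)$, which is \eqref{eqxwtc}.

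I expect the main obstacle to be the rigorous collapsing of the inner infimum, i.e. the identity $\essinf_{g_2} \rho_{rs}(g_1 - \rho_{st}(g_2 - Z)) = \bar\rho_{rs}(g_1 - A_{st}(Z))$. This relies on the upward-directedness of $\{-\rho_{st}(g_2 - Z)\}$, which lets one replace the infimum by a monotone sequential limit, and on the well-definedness and continuity from below of $\bar\rho_{rs}$ on $D_s$ — both already handled in the proof of Lemma \ref{propwtc}. The remaining care is simply to confirm that $g_1 - A_{st}(Z)$ belongs to $D_s$ in the two relevant cases; this is immediate once one notes that, since $A_{st}(x_{st}(X)) = A_{st}(X)$, the cases $Z = x_{st}(X)$ and $Z = X$ produce the very same element of $D_s$.
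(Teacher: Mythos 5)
Your proof is correct and follows essentially the same route as the paper: the paper's own argument is precisely to apply Lemma \ref{propwtc} with $Y = x_{st}(X)$, using Lemma \ref{Linf} to get $x_{st}(X)\in L_\infty(\F_s)$ and the projection/translation-invariance property to get $x_{st}(x_{st}(X)) = x_{st}(X)$, and the proof of that lemma contains exactly the factorization $A_{rt}(Z) = \essinf_{g_1\in{\cal C}^\infty_{rs}}\, \bar\rho_{rs}\big(g_1 - A_{st}(Z)\big)$ that you re-derive. Your version simply unfolds the lemma into an explicit identity instead of invoking its order-preservation statement twice, which is a presentational rather than a mathematical difference.
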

In particular this results show that, if $p=\infty$, the family $x_{st}$ is time-consistent, while
for $p<\infty$, the relationship is true only for essentially bounded claims.

\begin{proof}
For all $X \in L_{\infty}({\cal F}_t)$, the value $x_{st}(X)$ belongs to $L_{\infty}({\cal F}_s)$, see Lemma \ref{Linf} item 2. 
Then, as in \cite{AP}, $x_{rt}(x_{st}(X))=x_{rt}(X)$ follows from the time-consistency (Proposition \ref{propwtc}) applied with $X$ and 
 $Y=x_{st}(X)$.
\end{proof}

\begin{remark}
In the terminology of dynamic risk-measures, the property \eqref{eqxwtc} is sometimes called \emph{recursive}. See e.g. \cite{AP}.
This is not the same as the property of strong time-consistency. 
\end{remark}

\begin{remark}
Notice that when $p<\infty$ equation (\ref{eqxwtc}) cannot be proved in $L_p$.  Indeed it is not true in general that 
$x_{st}(X)$ belongs to $L_p(\F_s)$ for all $X \in L_p({\cal F}_t)$. 
This is the reason why we need to work with the extension of $x_{st}$ to $L^c_t$. Then we know that for every $X$ in $L^c_t$, 
$x_{st}(X)$ belongs to $L^c_s \subseteq L^c_t$.
\end{remark}

\begin{theorem}
\label{thmTCx}
Set $p\in [1,\infty)$.
Let $(\rho_{st})_{s,t}$ be a  strong time-consistent fully-dynamic risk measure on $(L_p(\F_t))_t$ such that $\rho_{0T}$ is 
dominated and sensitive. 
Let $x_{st}$ be the risk-indifference price on $L^c_t$ defined as in Definition \ref{defextx}. 
Then $(x_{st})_{s,t}$ on $(L_t^c)_t$ is time-consistent.

For every given time horizon $t \leq T$, the price system $(x_s)_{s}$, defined by restriction as  $x_s(X):=x_{st}(X)$, $s \leq t$, is 
time-consistent on 
the whole $L^c_t$.   Namely, for all $0 \leq r \leq s\leq t$, for all $X \in L^c_t$, we have $x_{s}(X) \in L^c_s$ and 
$$
x_{r}(X)=x_{r}(x_{s}(X)).
$$
\end{theorem}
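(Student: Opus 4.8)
The plan is to reduce the whole statement to the single \emph{recursive identity} $x_{rt}\big(x_{st}(X)\big)=x_{rt}(X)$ on $L^c_t$, from which both the asserted fixed-horizon relation and the time-consistency of the family will follow. Two ingredients make this reduction work. The first is the bounded case, already established in Proposition \ref{propTC}: for every $X\in L_\infty(\F_t)$ one has $x_{rt}(x_{st}(X))=x_{rt}(X)$, and on such $X$ the extended operator of Definition \ref{defextx} coincides with the one of \eqref{eqRIP2} by Proposition \ref{propxL}(1). The second is the contraction estimate \eqref{eqceqcontx}, which says each $x_{st}$ is $1$-Lipschitz for the seminorm $c$. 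I would also note at the outset that $x_{st}(X)\in L^c_s\subseteq L^c_t$ by Proposition \ref{propxL}(1) and Definition \ref{L^c}, so that the composition $x_{rt}\circ x_{st}$ is well defined on all of $L^c_t$.

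First I would prove the recursive identity for an arbitrary $X\in L^c_t$ by approximation. Since $L_\infty(\F_t)$ is $c$-dense in $L^c_t$ (Definition \ref{L^c}), choose $X_n\in L_\infty(\F_t)$ with $c(X-X_n)\to 0$. Applying \eqref{eqceqcontx} to $x_{rt}$, to $x_{st}$, and then to $x_{rt}$ composed with $x_{st}$ gives $c\big(x_{rt}(X)-x_{rt}(X_n)\big)\to 0$ and $c\big(x_{rt}(x_{st}(X))-x_{rt}(x_{st}(X_n))\big)\to 0$. For each $n$, Proposition \ref{propTC} gives $x_{rt}(x_{st}(X_n))=x_{rt}(X_n)$; letting $n\to\infty$ in $L^c_r$ yields $x_{rt}(x_{st}(X))=x_{rt}(X)$. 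Writing $x_s:=x_{st}$ and $x_r:=x_{rt}$, this is precisely the claimed identity $x_r(X)=x_r(x_s(X))$, and $x_s(X)\in L^c_s$ was recorded above.

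It then remains to deduce time-consistency of $(x_{st})_{s,t}$ in the sense of Definition \ref{def-consist}. Let $X,Y\in L^c_t$ with $x_{st}(X)\ge x_{st}(Y)$. Since $x_{rt}$ is monotone by Proposition \ref{propxL}(2), applying it to this inequality gives $x_{rt}(x_{st}(X))\ge x_{rt}(x_{st}(Y))$, and the recursive identity rewrites the two sides as $x_{rt}(X)$ and $x_{rt}(Y)$. Hence $x_{rt}(X)\ge x_{rt}(Y)$ for all $r\le s\le t$, which is the time-consistency of the family; by Definition \ref{defCPS} it is then a convex price system.

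The reason I would prove the equality before the order relation is that the latter does not survive the approximation directly: truncating $X$ and $Y$ separately destroys the hypothesis $x_{st}(X)\ge x_{st}(Y)$, so one cannot simply pass the bounded order-preservation of Lemma \ref{propwtc} to the limit. The recursive identity, being an \emph{equality}, is stable under $c$-limits precisely because of the contraction \eqref{eqceqcontx}, and once it is in hand the order relation follows for free from monotonicity. The only technical point to keep in view is that the composition is legitimate on the whole of $L^c_t$, which is guaranteed by the inclusion $L^c_s\subseteq L^c_t$.
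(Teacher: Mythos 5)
Your proof is correct and follows essentially the same route as the paper: the recursive identity $x_{rt}\big(x_{st}(X)\big)=x_{rt}(X)$ on $L^c_t$ is obtained from the bounded case (Proposition \ref{propTC}, i.e. \eqref{eqxwtc}), the $c$-density of $L_\infty(\F_t)$ in $L^c_t$, and the $c$-norm contraction \eqref{eqceqcontx} of Proposition \ref{propxL}, after which time-consistency follows via monotonicity. The paper's proof states exactly these ingredients in compressed form; your version merely makes the approximation argument and the final order-preservation step explicit.
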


\begin{proof}
Fix $t\in [0,T]$. 
We obtain that
$$
x_{rt}(x_{st}(X))=x_{rt}(X),\;\; \forall X \in L^c_t,
$$
by the density of $L_{\infty}({\cal F}_t)$ in $L^c_t$ and the uniform continuity for the c norm of $x_{st}$ for all 
$s \leq t$ 
(Proposition \ref{propxL}) and  (\ref{eqxwtc}). 
In turn, this gives the time-consistency of the family $(x_s)_{s}$, where $s\leq t$. 
\end{proof}

\bigskip
In the last part of this section we prove the regularity of the trajectories for the risk-indifferent price operators.

\begin{theorem}
 Fix some $t\in [0,T]$. Assume that for some $Q \in{\cal Q}$, $\gamma_{0t}(Q)=0$, where $\gamma_{0t}$ is the minimal penalty for $x_{0t}$. 
 Then for all $X \in L^c_t$, the stochastic process $x_{st}(X)$, $0 \leq s \leq t$, admits a c\`adl\`ag modification.
\end{theorem}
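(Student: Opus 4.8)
The strategy is to realise, for the distinguished $Q\in\mathcal{Q}$ with $\gamma_{0t}(Q)=0$, the process $s\mapsto x_{st}(X)$, $s\in[0,t]$, as a $Q$-supermartingale and then to invoke the classical regularisation theorem: under the usual conditions on $(\mathcal{F}_s)$, a supermartingale admits a c\`adl\`ag modification as soon as $s\mapsto E_Q[x_{st}(X)]$ is right-continuous. Throughout I fix the horizon $t$ and regard $(x_{st})_{s\le t}$ as the one-parameter time-consistent family of price operators on $L^c_t$ provided by Theorem \ref{thmTCx}.

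First I would establish the supermartingale property. For the fixed horizon $t$ the minimal penalties $\gamma_{st}(Q)=\esssup_{Y\in L^c_t}(E_Q(Y|\mathcal{F}_s)-x_{st}(Y))$ are nonnegative (take $Y=0$ and use $x_{st}(0)=0$) and, by time-consistency, satisfy the usual cocycle relation (see \cite{AP}), which in particular gives $\gamma_{0t}(Q)\ge E_Q[\gamma_{rt}(Q)]$. Combined with $\gamma_{0t}(Q)=0$ this yields $0=\gamma_{0t}(Q)\ge E_Q[\gamma_{rt}(Q)]\ge 0$, hence $\gamma_{rt}(Q)=0$ $P$-a.s. for every $r\le t$ (as $Q\sim P$). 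Consequently the definition of the penalty gives $x_{rt}(Z)\ge E_Q(Z|\mathcal{F}_r)$ for all $Z\in L^c_t$; applying this to $Z=x_{st}(X)$ and using the recursion \eqref{eqxwtc}, namely $x_{rt}(x_{st}(X))=x_{rt}(X)$, we obtain $x_{rt}(X)\ge E_Q(x_{st}(X)|\mathcal{F}_r)$ for $r\le s\le t$. The process is $Q$-integrable, since $x_{st}(X)\in L^c_s\subseteq L_1(\mathcal{F}_s,Q)$ by Lemma \ref{Lemma L^c} and, by \eqref{eqceqcontx}, $\sup_s E_Q|x_{st}(X)|\le c(|x_{st}(X)|)\le c(X)<\infty$.

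The crux is the right-continuity of $s\mapsto E_Q[x_{st}(X)]$, which I would first prove for $X\in L_\infty(\mathcal{F}_t)$. Here $|x_{st}(X)|\le\Vert X\Vert_\infty$ uniformly in $s$, so the bounded $Q$-supermartingale has $Q$-a.s. right limits $Y_{s+}:=\lim_{s'\downarrow s}x_{s't}(X)$, which are $\mathcal{F}_s$-measurable by right-continuity of the filtration and hence lie in $L^c_s$. The key identification $Y_{s+}=x_{st}(X)$ follows from three structural facts: the projection property (Proposition \ref{propxL}) gives $x_{st}(Y_{s+})=Y_{s+}$; the recursion \eqref{eqxwtc} gives $x_{st}(x_{s't}(X))=x_{st}(X)$; and the $c$-contractivity \eqref{eqceqcontx} gives
$$
c\big(|x_{st}(X)-Y_{s+}|\big)=c\big(|x_{st}(x_{s't}(X))-x_{st}(Y_{s+})|\big)\le c\big(|x_{s't}(X)-Y_{s+}|\big)\le K\,\Vert x_{s't}(X)-Y_{s+}\Vert_p,
$$
the last bound coming from the domination (Proposition \ref{prop1}); the right-hand side tends to $0$ as $s'\downarrow s$ by dominated convergence. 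Thus $x_{st}(X)=Y_{s+}$, so $s\mapsto x_{st}(X)$ is right-continuous in $L_1(Q)$ and $s\mapsto E_Q[x_{st}(X)]$ is right-continuous. For general $X\in L^c_t$ I would pass to the limit using the density of $L_\infty(\mathcal{F}_t)$ in $L^c_t$ and the uniform-in-$s$ estimate $E_Q|x_{st}(X)-x_{st}(X_k)|\le c\big(|x_{st}(X)-x_{st}(X_k)|\big)\le c\big(|X-X_k|\big)$: approximating $X$ by bounded $X_k$ with $c(|X-X_k|)\to 0$ transfers right-continuity of the expectation from the $X_k$ to $X$.

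The main obstacle is precisely this identification of the supermartingale right limit with $x_{st}(X)$. Since no continuity in the time indices of the family $(\rho_{st})$, and hence of $(x_{st})$, is available a priori, right-continuity cannot be read off from any time-regularity of the operators; it must instead be forced by the combination of the vanishing penalties $\gamma_{rt}(Q)=0$ (which is where the hypothesis $\gamma_{0t}(Q)=0$ enters), the projection property, and the $c$-norm contractivity. Once right-continuity of $s\mapsto E_Q[x_{st}(X)]$ is secured for every $X\in L^c_t$, the regularisation theorem for supermartingales delivers the desired c\`adl\`ag modification of $x_{st}(X)$, $0\le s\le t$.
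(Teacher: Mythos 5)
Your proof is correct and follows essentially the same route as the paper: for the zero-penalty measure $Q$ you show that $s\mapsto x_{st}(X)$ is a $Q$-supermartingale with right-continuous expectation, first for $X\in L_\infty(\F_t)$ and then for all $X\in L^c_t$ via the density of $L_\infty(\F_t)$ in $L^c_t$ and the uniform $c$-norm equicontinuity \eqref{eqceqcontx}, and you conclude by the classical supermartingale modification theorem, exactly as the paper does with \cite{DM}. The only difference is one of packaging: where the paper imports the supermartingale property and the $L_1(Q)$-right-continuity for bounded claims from Lemmas 3 and 4 of Section 3.1 of \cite{BN02}, you reprove them in place --- the penalty cocycle (valid since the fixed-horizon family is, up to sign, a normalised time-consistent dynamic risk measure) giving $\gamma_{rt}(Q)=0$ for all $r\le t$, and the identification of the supermartingale right limit (taken along rational times) with $x_{st}(X)$ via the projection property, the recursion \eqref{eqxwtc} and $c$-contractivity --- which is a faithful reconstruction of the cited arguments.
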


\begin{proof}
 Notice that $\gamma_{0t}(Q)=0$ implies that 
 \begin{equation}
 0=\sup_{X \in L_{\infty}({\cal F}_t)}(E_Q(X)-x_{0t}(X)).
 \label{eqSP}
 \end{equation}
 For all $r \leq s\leq t$, let $y_{rs}$ be the restriction of $x_{rt}$ to $L^c_s$. 
 From Theorem \ref{thmTCx} we deduce that the family $(y_{rs})_{0 \leq r \leq s\leq t}$ of operators on $(L^c_s)_s$ is strong time-consistent. 
 Its restriction to $(L_{\infty}(\F_s))_s$ is up to a minus sign a strong time-consistent normalised dynamic risk measure on $(L_{\infty}(\F_s))_s$.
Making use of (\ref{eqSP}), the proof of Lemma 3 and of Lemma 4 in Section 3.1 of  \cite{BN02}, applied with deterministic times, we can prove that $x_{st}(X)$ is the limit of $x_{s_nt}(X)$ in $L_1(Q)$, 
for every decreasing sequence $(s_n)_n$ converging to $s$ and for every $X \in L_{\infty}({\cal F}_t)$. 
We also get that $x_{rt}(X) \geq E_Q(x_{st}(X)|{\cal F}_r)$  for all $X \in L_{\infty}({\cal F}_t)$. 
Moreover, by the density of $L_{\infty}({\cal F}_t)$ in $L^c_t$ and  the uniform equicontinuity of $x_{st}$ for the $c$ norm (equation (\ref{eqceqcontx})), we can show that, for all $X \in L^c_t$, 
the value $x_{st}(X)$ is the limit of $x_{s_nt}(X)$ in $L_1(Q)$ and that $x_{st}(X)$ is a $Q$-supermartingale for all $X \in L^c_t$. 
Recall that the probability measure $Q \in {\cal Q}$ is equivalent with $P$,  then the modification theorem (see Theorem 4 page 76 in \cite{DM}) proves that, for all $X \in L^c_t$, the process $x_{st}(X)$ admits a c\`adl\`ag modification.
 \end{proof}

 \begin{remark} In view of the proposition above, our results are valid also with stopping times. Indeed, we could also have started our work with a fully-dynamic risk measure indexed by stopping times $(\rho_{\sigma,\tau})_{0 \leq \sigma \leq \tau \leq T}$ as in \cite{BN02} and obtain the same results of the present paper replacing deterministic times by stopping times.
We stress that our framework allows to give price evaluations to all American-type financial claims.
 \end{remark}
 

\bigskip
\section{Risk-indifference prices in $L_2$ and no-good-deal bounds}
\label{SecNGD}

Good-deal bounds were suggested by Cochrane and Saa Requejo \cite{CSR2000} in a static setting by fixing bounds on the Sharpe ratio. The idea is of identifying in this way those deals that are ``too good to be true''. Idea that was also considered by Bernardo and Ledoit \cite{BL2000} who are setting bounds on the gain-loss ratio and earlier Hodges \cite{Hodges1998} who uses a generalized Sharpe ratio derived from the negative exponential utility function, and also Cerny \cite{Cerny1999} where smooth utility functions are used to define good deals. See also \cite{Cerny-Hodges2002}.

In \cite{BNDN}, starting from \cite{CSR2000}, the relationship between bounds on the Sharpe ratio and no-good-deal pricing measures was detailed providing an equivalent definition of no-good-deal bounds expressed in terms of bounds on the Radon-Nykodim derivatives. 
Also it was possible to define the concept of dynamic no-good-deal bounds. 

In this section, in view of the nature of these concepts, it is natural to work with $p=2$.

We shall use dynamic no-good-deal bounds to provide a construction of risk-indifference prices in $L_2$. Indeed by the use of the bounds we can guarantee that, for all $s \leq t$, the risk-indifference price
$x_{st}$ (satisfying these bounds) is a well defined operator from $L_2(\F_t)$ to  $L_2(\F_s)$. 
We shall related this approach with the results of Section \ref{SecRI}. 
Moreover, our study provides a characterisation of the risk measures $(\rho_{st})_{s,t}$ so that the associated risk-indifferent prices are no-good-deal prices.

In the first part of this section we revise the fundamental concepts of no-good-deal bounds and provide some first results on the role of the bounds for the construction of convex operators in $L_2$.


\subsection{No-good-deal prices}

\begin{defn}
\label{no-good-deal measure}
A probability measure $Q\sim P$ is a \emph{no-good-deal pricing measure} if there are no good-deals of level $\delta>0$ under $Q$, that is, the Sharpe ratio is bounded:
\begin{equation}
\label{eqSR-2}
-\delta \leq \frac{E(X)-E_Q(X)}{\sqrt {\textrm{Var}(X)}} \leq \delta,
\end{equation}
for all $X\in L_2(\mcal{F}_T,P)\cap L_1(\mcal{F}_T,Q)$ such that $\textrm{Var}(X) \ne 0$.
Equivalently, we can say that $Q \sim P$ is a \emph{no-good-deal pricing measure} if 
$\frac{dQ}{dP} \in L_2({\cal F}_T)$ satisfies
\begin{equation}
E\Big[\big(\frac{dQ}{dP}-1\big)^2\Big] \leq \delta ^2.
\label{SR2}
\end{equation}
\end{defn}

From a dynamic perspective, it is suitable to work with the following set of probability measures.
\begin{defn}
\label{ngd-set}
Let $s \leq t$. Define the set $\mathfrak{Q}_{st}$ of probability measures on $\mcal{F}_t$ as
$$
\mathfrak{Q}_{st}  := \big\{Q \ll P : Q_{|{\cal F}_s}\hspace{-1mm}=\hspace{-1mm} P\: and \; 
\frac{dQ}{dP} \in {\cal D}_{st} \big\},
$$
where
$$
{\cal D}_{st} := \big\{1+h_{s,t} : \; h_{st} \in L_2(\F_t), \: E[h_{s,t}\vert \F_s] =0 \:  E\big[h_{st}^2|{\cal F}_s\big] 
\leq \delta_{st}^2\big\} . 
$$
Here the family of non-negative real numbers $\delta_{st}$, $s,t\in [0,T]: s\leq t$, satisfies the condition for all $r \leq s \leq t$:
\begin{equation}
(\delta_{rs}\delta_{st}+\delta_{rs}+\delta_{st})= \delta_{rt}
\label{eqComp}
\end{equation}
and $\delta_{st}\rightarrow 0$, $t\downarrow s$.
\end{defn}
Remark that we can connect the bounds on the Sharpe ratio \eqref{eqSR-2} with the one here above by choosing $\delta_{st}:=\delta^{t-s}-1$, for some $\delta >1$.

Then the following definition is given.
\begin{defn}
\label{dSR}
A probability measure $Q \sim P$ is a \emph{dynamic no-good-deal pricing measure} if 
$\frac{dQ}{dP} \in L_2({\cal F}_T)$ satisfies
\begin{equation}
E\Big[\big(\Big(\frac{dQ}{dP}\Big)_t\Big(\frac{dQ}{dP}\Big)_s^{-1}-1\big)^2\vert \mcal{F}_s\Big] \leq \delta_{st}^2,
\end{equation}
for every $s\leq t$ and constants $\delta_{st}>0$ satisfying \eqref{eqComp}.
Here $\big(\frac{dQ}{dP}\big)_t:= E\big[ \frac{dQ}{dP} \vert \mcal{F}_t \big]$.
\end{defn}
Corresponding to these bounds on the Radon-Nykodim derivatives, we can characterise the no-good-deal bounds on prices. 
\begin{defn}
\label{ngd-bounds-def}
The \emph{no-good-deal bounds} on prices are the sub-linear and super-linear operators here below:
\[
\begin{split}
M_{st}(X) &:= \underset{Q \in \mathfrak{Q}_{st}} \esssup \,  E_Q[X|{\cal F}_s], \quad X\in L_2(\mcal{F}_t),\\
m_{st}(X) &:=  \underset{Q \in \mathfrak{Q}_{st}} \essinf \, E_Q[X|{\cal F}_s], \quad X\in L_2(\mcal{F}_t),
\end{split}
\]
where $\mathfrak{Q}_{st}$ is given in Definition \ref{ngd-set}. 
\end{defn}
Clearly, $m_{st}(X) = - M_{st}(-X)$, for $X\in L_2(\F_t)$.
The properties of these operators are studied in Proposition 5.8 in \cite{BNDN}.

\bigskip
Hereafter we study the representation of a general convex price operator 
$x_{st}: L_\infty(\F_t) \longrightarrow L_\infty(\F_s)$ satisfying the no-good-deal bounds. 
This is a crucial result for the study of no-good-deal risk-indifference prices.
\begin{proposition}
\label{NGD-rep} 
Set $s \leq t$. Let $x_{st}: L_\infty(\F_t) \longrightarrow L_\infty(\F_s) $ be a convex price operator 
Definition \ref{convex-price} 
satisfying the no-good-deal bound: for all $X \in L_\infty(\F_t): X \geq 0$, 
\begin{equation}
\label{M-bound}
x_{st}(X) \leq M_{st}(X).
\end{equation}
Let $\gamma_{st}$ be the minimal penalty of $x_{st}$ on $L_\infty(\F_t)$.
Then, for any probability measure $Q \ll P$: $Q_{\vert \F_s} = P$ and $E[\gamma_{st}(Q)]<\infty$, we have that 
$Q\in \mathfrak{Q}_{st}$ and the following representation holds
\begin{equation}
\label{NGD-eq-rep}
x_{st}(X) =  \underset{Q  \in \mathfrak{Q}_{st}}\esssup \;\Big( E_Q (X\vert \F_s) - \gamma_{st}(Q) \Big), 
\quad X \in L_\infty(\F_t).
\end{equation}
Conversely, if \eqref{NGD-eq-rep} holds than \eqref{M-bound} is satisfied.
\end{proposition}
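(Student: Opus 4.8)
The plan is to derive first the unrestricted dual representation of $x_{st}$ and then to use the no-good-deal bound to identify exactly which penalising measures occur. Since $x_{st}:L_\infty(\F_t)\to L_\infty(\F_s)$ is a convex price operator, it is in particular monotone, convex and has the Fatou property, so the standard representation theory for conditional convex operators on $L_\infty$ (as in \cite{DS2005}, in the same spirit as Proposition \ref{Fatouinf}) yields
$$
x_{st}(X)=\underset{Q\ll P,\,Q_{\vert\F_s}=P}{\esssup}\big(E_Q(X\vert\F_s)-\gamma_{st}(Q)\big),\qquad X\in L_\infty(\F_t),
$$
where only measures with a.s.\ finite penalty contribute to the essential supremum. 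Hence the heart of the matter is the claim that every competitor with finite penalty lies in $\mathfrak{Q}_{st}$; this is precisely the asserted fact that $Q\in\mathfrak{Q}_{st}$ whenever $Q_{\vert\F_s}=P$ and $E[\gamma_{st}(Q)]<\infty$. I expect this membership claim to be the main obstacle.

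To prove it, fix such a $Q$. The representation gives the lower bound $E_Q(X\vert\F_s)-\gamma_{st}(Q)\leq x_{st}(X)$, while the hypothesis gives $x_{st}(X)\leq M_{st}(X)$ for $X\geq 0$. Applying this to $\lambda X$ for $\lambda>0$ and using the positive homogeneity of the sublinear $M_{st}$ gives $E_Q(X\vert\F_s)-\lambda^{-1}\gamma_{st}(Q)\leq M_{st}(X)$; letting $\lambda\to\infty$ (here $\gamma_{st}(Q)<\infty$ a.s., which follows from $E[\gamma_{st}(Q)]<\infty$) removes the penalty, so $E_Q(X\vert\F_s)\leq M_{st}(X)$ for all $X\geq 0$. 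Since every $Q'\in\mathfrak{Q}_{st}$ has $Q'_{\vert\F_s}=P$, one has $M_{st}(X+c)=M_{st}(X)+c$ for constants $c$, and $E_Q(X+c\vert\F_s)=E_Q(X\vert\F_s)+c$; choosing $c\geq-\essinf X$ so that $X+c\geq 0$ therefore upgrades the inequality to $E_Q(X\vert\F_s)\leq M_{st}(X)$ for \emph{every} $X\in L_\infty(\F_t)$.

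Next I would insert the explicit form of the bound, $M_{st}(X)=E(X\vert\F_s)+\delta_{st}\sqrt{\textrm{Var}(X\vert\F_s)}$ with $\textrm{Var}(X\vert\F_s):=E[(X-E[X\vert\F_s])^2\vert\F_s]$, which follows from conditional Cauchy--Schwarz applied to the defining constraint $E[h^2\vert\F_s]\leq\delta_{st}^2$ of $\mathcal{D}_{st}$ (cf.\ Proposition 5.8 in \cite{BNDN}). Writing $\frac{dQ}{dP}=1+h$ with $E[h\vert\F_s]=0$ (valid because $Q_{\vert\F_s}=P$), the inequality reads $E[hX\vert\F_s]\leq\delta_{st}\sqrt{\textrm{Var}(X\vert\F_s)}$ for all bounded $X$. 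Testing against the bounded variables $X=h\,1_{\{|h|\leq n\}}$ gives $E[h^2 1_{\{|h|\leq n\}}\vert\F_s]\leq\delta_{st}\sqrt{E[h^2 1_{\{|h|\leq n\}}\vert\F_s]}$, whence $E[h^2 1_{\{|h|\leq n\}}\vert\F_s]\leq\delta_{st}^2$; monotone convergence then yields $E[h^2\vert\F_s]\leq\delta_{st}^2$, so in particular $h\in L_2(\F_t)$ and $\frac{dQ}{dP}\in\mathcal{D}_{st}$, i.e.\ $Q\in\mathfrak{Q}_{st}$. This proves the membership claim, and restricting the essential supremum in the representation above to $\mathfrak{Q}_{st}$ establishes \eqref{NGD-eq-rep}.

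Finally, the converse is immediate: assuming \eqref{NGD-eq-rep}, the minimal penalty is non-negative since $\gamma_{st}(Q)\geq E_Q(0\vert\F_s)-x_{st}(0)=0$, so dropping it gives $x_{st}(X)\leq\esssup_{Q\in\mathfrak{Q}_{st}}E_Q(X\vert\F_s)=M_{st}(X)$ for every $X$, in particular for $X\geq 0$, which is \eqref{M-bound}. The only points needing care are the measurability and attainment in the formula for $M_{st}$ (treating the set $\{\textrm{Var}(X\vert\F_s)=0\}$, on which $X$ is $\F_s$-measurable and the bound is trivial), and the standard fact, a consequence of continuity from above, that penalising measures with infinite penalty can be discarded from the essential supremum.
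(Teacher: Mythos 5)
Your proof is correct and follows the same overall strategy as the paper's: first the unrestricted dual representation of the convex price operator, then the scaling $\lambda\to\infty$ to eliminate the penalty, then test functions to extract the constraint $E[h^2\vert\F_s]\leq\delta_{st}^2$ on $h=\frac{dQ}{dP}-1$, and the immediate converse. The notable difference is at the membership step $Q\in\mathfrak{Q}_{st}$, where you are in fact more careful than the paper. The paper stops at the inequality $E[hX\vert\F_s]\leq\delta_{st}(E[X^2\vert\F_s])^{1/2}$ for \emph{nonnegative} bounded $X$ and directly asserts $E[h^2\vert\F_s]\leq\delta_{st}^2$; read literally, this inference is insufficient, since nonnegative test functions only control the positive part of $h$ (for instance, with $\F_s$ trivial, the variable $h$ equal to $-1$ with probability $p$ and to $p/(1-p)$ otherwise satisfies $E[hX]\leq\delta\Vert X\Vert_2$ for all bounded $X\geq 0$ with $\delta^2=p^2/(1-p)$, yet $E[h^2]=p/(1-p)>\delta^2$). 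Your translation step --- using that both $E_Q(\cdot\vert\F_s)$ and $M_{st}$ are additive under constants because all measures involved restrict to $P$ on $\F_s$, combined with the translation-invariant variance bound rather than the second-moment bound --- upgrades the inequality to all signed bounded $X$, which is exactly what legitimises the test functions $X=h1_{\{|h|\leq n\}}$ and closes this gap. One small caveat: your claimed identity $M_{st}(X)=E(X\vert\F_s)+\delta_{st}\sqrt{\mathrm{Var}(X\vert\F_s)}$ need not hold with equality (the densities $1+h$ in ${\cal D}_{st}$ must be nonnegative, which can prevent attainment of the Cauchy--Schwarz bound), but your argument only uses the inequality $\leq$, which is valid, so nothing is affected.
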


\begin{proof}
From Proposition 1 in \cite{BN02} we have that a convex price operator $x_{st}$ defined on $L_\infty(\F_t)$ admits representation in the form
\begin{equation}
\label{base-rep}
x_{st}(X) =  \underset{\substack{Q  \ll P: {\cal Q}_{\vert \F_s} = P\\  E[\gamma_{st}(Q)]<\infty }}
\esssup \Big( E_Q (X\vert \F_s) - \gamma_{st}(Q) \Big), \quad X \in L_\infty(\F_t).
\end{equation}
Consider the operator
$$
y_{st}(X) := x_{st}(X) - E (X\vert \F_s), \quad X\in L_\infty(\F_t).
$$
From \eqref{M-bound} and Definition \ref{ngd-set}, we have that, for $X \in L_\infty(\F_t)$: $X\geq 0$
\begin{equation}
\label{*}
y_{st}(X) \leq \underset{h \in {\cal D}_{st}} \esssup  E  \big( (h-1) X\vert \F_s \big) \leq \delta_{st} (E( X ^2\vert \F_s ) \big)^{1/2}.
\end{equation}
Hence, from \eqref{base-rep} and \eqref{*}, we obtain
\[
\begin{split}
\underset{\substack{Q  \ll P: {\cal Q}_{\vert \F_s} = P \\  
\frac{dQ}{dP}=k}}\esssup \Big( E ((k-1)  X\vert \F_s) - \gamma_{st}(Q) \Big) &= y_{st}(X)  
\leq \delta_{st} (E( X ^2\vert \F_s ) \big)^{1/2},
\end{split}
\]
for $X \in L_\infty(\F_t)$: $X\geq 0$.
Consider any $Q \ll P$ such that ${\cal Q}_{\vert \F_s} = P$, $\frac{dQ}{dP}=k$ and $E[\gamma_{st}(Q)]<\infty$.
Then for every $\lambda >0$ we have
$$
E ((k-1)  \lambda X\vert \F_s) - \gamma_{st}(Q) \leq \lambda \delta_{st} (E( X ^2\vert \F_s ) \big)^{1/2}.
$$
Thus 
$$
E ((k-1)   X\vert \F_s)  \leq  \delta_{st} (E( X ^2\vert \F_s ) \big)^{1/2} + \frac{1}{\lambda} \gamma_{st}(Q).
$$
Taking $\lambda \to \infty$ we obtain that
$ E ((k-1)   X\vert \F_s)  \leq  \delta_{st}  (E( X ^2\vert \F_s ) \big)^{1/2} $.
Hence, setting $h:= k-1$, we conclude that $E(h^2 \vert \F_s) \leq \delta_{st}^2$. That is $Q \in \mathfrak{Q}_{st}$.
Then $x_{s,t}$ admits representation \eqref{NGD-eq-rep}. The converse is immediate.
\end{proof}
We remark that from the representation \eqref{NGD-eq-rep} we can see that the bound \eqref{M-bound} is satisfied \emph{for all} $X \in L_\infty(\F_t)$.

\begin{corollary}
\label{cor-NGD}
Set $s\leq t$. 
Let $x_{st}: L_\infty(\F_t) \longrightarrow L_\infty(\F_s)$ be a convex price operator satisfying the no-good-deal bound \eqref{M-bound}.
Then $x_{st}$ is continuous in the $L_2$-norm and admits a unique extension
$$
x_{st} \, : L_2(\F_t) \longrightarrow L_2(\F_s).
$$
This extension admits the representation
\begin{equation}
\label{base-rep-2}
x_{st}(X) = \underset{Q\in \mathfrak{Q}_{st}}\esssup \Big( E_Q (X\vert \F_s) - \gamma_{st}(Q) \Big), 
\quad X \in L_2(\F_t),
\end{equation}
and satisfies the no-good-deal bounds for all $X\in L_2(\F_t)$:
\begin{equation}\label{sand}
m_{st}(X) \leq x_{st}(X) \leq M_{st}(X).
\end{equation}
\end{corollary}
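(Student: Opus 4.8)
The plan is to exploit the representation \eqref{NGD-eq-rep}, already available on $L_\infty(\F_t)$ from Proposition \ref{NGD-rep}, and to promote every assertion to $L_2(\F_t)$ by density. The decisive step is to show that $x_{st}$ is Lipschitz for the $L_2$-norm on $L_\infty(\F_t)$; once this is in hand, uniform continuity together with the completeness of $L_2(\F_s)$ and the density of $L_\infty(\F_t)$ in $L_2(\F_t)$ produce the unique continuous extension, and the remaining claims follow by passing to the limit.

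First I would establish the $L_2$-Lipschitz estimate. Fix $X,Y\in L_\infty(\F_t)$. From \eqref{NGD-eq-rep}, for each $Q\in\mathfrak{Q}_{st}$,
$$
E_Q(X\vert\F_s)-\gamma_{st}(Q)\leq\big(E_Q(Y\vert\F_s)-\gamma_{st}(Q)\big)+E_Q(X-Y\vert\F_s)\leq x_{st}(Y)+M_{st}(X-Y),
$$
and taking the essential supremum over $Q$ gives $x_{st}(X)-x_{st}(Y)\leq M_{st}(X-Y)$. Writing $\frac{dQ}{dP}=1+h$ with $E(h\vert\F_s)=0$ and $E(h^2\vert\F_s)\leq\delta_{st}^2$, the conditional Cauchy--Schwarz inequality yields $M_{st}(Z)\leq E(Z\vert\F_s)+\delta_{st}(E(Z^2\vert\F_s))^{1/2}$ for $Z\in L_2(\F_t)$. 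Interchanging $X$ and $Y$ and combining, we obtain
$$
\vert x_{st}(X)-x_{st}(Y)\vert\leq\vert E(X-Y\vert\F_s)\vert+\delta_{st}\big(E((X-Y)^2\vert\F_s)\big)^{1/2}.
$$
Taking $\Vert\cdot\Vert_2$ and using that conditional expectation is an $L_2$-contraction, this gives $\Vert x_{st}(X)-x_{st}(Y)\Vert_2\leq(1+\delta_{st})\Vert X-Y\Vert_2$, which is the stated continuity and produces the extension.

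Next I would transfer the representation. Set $F(X):=\underset{Q\in\mathfrak{Q}_{st}}\esssup(E_Q(X\vert\F_s)-\gamma_{st}(Q))$ for $X\in L_2(\F_t)$; since $\gamma_{st}(Q)\geq -x_{st}(0)=0$ one has $F(X)\leq M_{st}(X)\in L_2(\F_s)$, so $F(X)$ is a.s.\ finite, and the chain of inequalities above shows $F$ obeys the same $L_2$-Lipschitz bound. As $F$ coincides with $x_{st}$ on the dense subspace $L_\infty(\F_t)$ by \eqref{NGD-eq-rep} and both sides are $L_2$-continuous, they agree on all of $L_2(\F_t)$; this is \eqref{base-rep-2}.

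Finally, the sandwich \eqref{sand} is read off from \eqref{base-rep-2}. The upper bound is immediate: since $\gamma_{st}(Q)\geq0$, each term $E_Q(X\vert\F_s)-\gamma_{st}(Q)$ is dominated by $M_{st}(X)$, whence $x_{st}(X)\leq M_{st}(X)$ for every $X\in L_2(\F_t)$. For the lower bound, convexity of the extended operator (inherited from $L_\infty(\F_t)$ by passing to the $L_2$-limit) together with $x_{st}(0)=0$ gives $0=x_{st}\big(\tfrac12X+\tfrac12(-X)\big)\leq\tfrac12x_{st}(X)+\tfrac12x_{st}(-X)$, so $x_{st}(X)\geq-x_{st}(-X)\geq-M_{st}(-X)=m_{st}(X)$. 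The single genuine obstacle is the $L_2$-Lipschitz estimate of the second paragraph; everything afterwards is routine density and monotonicity bookkeeping, the only mild care being to confirm that $F(X)$ is a.s.\ finite before invoking density, which the positivity of the penalty settles.
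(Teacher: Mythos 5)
Your proposal is correct and follows essentially the same route as the paper: $L_2$-continuity is deduced from the dual representation of Proposition \ref{NGD-rep} together with the conditional Cauchy--Schwarz bound $E_Q(Z\vert\F_s)\leq E(Z\vert\F_s)+\delta_{st}(E(Z^2\vert\F_s))^{1/2}$ (the paper's inequality \eqref{*}), the extension and the representation \eqref{base-rep-2} are obtained by density since both sides are $L_2$-continuous, and the sandwich \eqref{sand} follows from $m_{st}(X)\leq -x_{st}(-X)\leq x_{st}(X)\leq M_{st}(X)$. You merely make explicit the Lipschitz estimate and the finiteness of the right-hand side, details the paper leaves implicit.
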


\begin{proof}
We can see that $x_{st}$ is continuous in the $L_2$-norm from \eqref{*}. 
Hence the operator can be uniquely extended as a mapping from $L_2(\F_t)$ to $L_2(\F_s)$  by continuity.
Moreover, the right-hand side of representation \eqref{NGD-eq-rep} is continuous in the $L_2$-norm, hence also extendable by continuity as a mapping from $L_2(\F_t)$ to $L_2(\F_s)$. Thus representation
\eqref{base-rep-2} holds.
Moreover, for all $X\in L_2(\F_t)$, $m_{st}(X) \leq  -x_{st}(-X) \leq x_{st}(X) \leq M_{st}(X)$.
\end{proof}

\begin{remark}
As for the Sharpe ratio in the static case, the dynamic no-good-deal bounds are defined without reference to some specific price model. 
In the case where the dynamics of some basic assets are given the question of no-free-lunch (and thus no-arbitrage) has been studied in \cite{JBN_BA}. It follows 
from Theorem 5.1 therein that a dynamic convex price system satisfies the no-free-lunch condition if and only if it admits a dual representation with equivalent local martingale measures for the basic assets. \\ 
In general, the no-good-deal bounds are not included in the no-arbitrage bounds. No-arbitrage no-good-deal bounds can be constructed by replacing in the Definition \ref{ngd-bounds-def} the set ${\cal Q}_{st}$ by its intersection with the set of equivalent local martingale measures.
\end{remark}


\subsection{Risk-indifference prices with no-good-deal bounds}

In this section we aim at finding necessary and sufficient conditions on the fully-dynamic risk measures,  such that the associated risk-indifference price system satisfies the no-good-deal bounds.

\bigskip
We recall that the risk-indifference price system $(x_{st})_{s,t}$ on $(L_\infty(\F_t))_t$ is a convex price system according to Definition \ref{defCPS}.

\bigskip
In view of the setting $p=2$, we consider $(\rho_{st})_{s,t}$ to be a fully-dynamic risk measure on $(L_2({\cal F}_t))_t$
and $(x_{st})_{s,t}$ to be the family of operators defined as in \eqref{eqRIP}, from Definition \ref{defRIP}. 
For $s \leq t$, we define 
$$
\breve \rho_{st}(X):=\rho_{st}(X)-\rho_{st}(0), \quad X\in L_2(\F_t).
$$
Then the restriction of $\breve \rho_{st}$ to $L_{\infty}({\cal F}_t)$ takes values in $L_{\infty}({\cal F}_s)$ and 
 the risk-indifference price $\breve x_{st}$ associated to $\breve \rho_{st}(X)$ coincides with $x_{st}$.
 In the following $\breve \alpha_{st}$ denotes the minimal penalty associated to 
 $\breve \rho_{st}$ on $L_{\infty}({\cal F}_t)$.

\begin{theorem}
 \label{thmNGD}
 Let $(\rho_{st})_{s,t}$ be a fully-dynamic risk measure on $(L_2({\cal F}_t))_t$ with 
 $\breve \rho_{st}$ and $\breve\alpha_{st}$ as before. The following two groups of assertions A and B are equivalent for all $s\leq t$:
 \begin{enumerate}
 \item [A1.] $E(\underset{g \in \mathcal{C}^{\infty}_{st}}\essinf \rho_{st}(g)) \in \R$.
 \item [A2.] $x_{st}(X)=\underset{g \in \mathcal{C}^{\infty}_{st}}\essinf \rho_{st}(g-X)- \underset{g \in \mathcal{C}^{\infty}_{st}}\essinf \rho_{st}(g)$ 
 satisfies the no-good-deal bounds on $L_\infty({\cal F}_t)$.
 \item [B1.] For any probability measure $R \ll P$ on ${\cal F}_t$, $R_{|{\cal F}_s}=P$, such that 
 $E(\breve\alpha_{st}(R))<\infty$. Then either 
  \begin{itemize}
  \item $R \in \mathfrak{Q}_{st} \cap I_{st}$
  \item or $R \in I_{st}^c$
 \end{itemize}
 with 
 $$
 I_{st}=\Big\{R \ll P, \;R_{|{\cal F}_s}=P:\;E\Big[\underset{g \in \mathcal{C}^{\infty}_{st}}\esssup E_R (g|{\cal F}_s)\Big]<\infty\Big\}.
 $$
 \item [B2.] The set $\mathfrak{Q}_{st} \cap I_{st}$ is non empty and there exists  $R_0 \in \mathfrak{Q}_{st} \cap I_{st}$ 
 such that 
 $E(\breve\alpha_{st}(R_0))<\infty$.
 \end{enumerate}
\end{theorem}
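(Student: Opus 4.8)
The plan is to reduce both groups of assertions to a single dual representation of the price operator $x_{st}$ on $L_\infty(\F_t)$ together with one penalty identity, and then read off A and B from Proposition \ref{NGD-rep}. Since $x_{st}$ is the risk-indifference price attached to the normalized risk measure $\breve\rho_{st}$ (the shifts $\rho_{st}(0)$ cancel in \eqref{eqRIP2}), it is a convex price operator from $L_\infty(\F_t)$ into $L_\infty(\F_s)$ by Lemma \ref{Linf}, and hence, by Proposition 1 in \cite{BN02}, admits the representation \eqref{base-rep} over the set of $Q\ll P$ with $Q_{|\F_s}=P$ and $E[\gamma_{st}(Q)]<\infty$, where the minimal penalty $\gamma_{st}$ satisfies $\gamma_{st}\geq 0$ (as $x_{st}(0)=0$). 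The first step is to compute this penalty. Writing $c_{st}:=\essinf_{g\in\mathcal{C}^\infty_{st}}\breve\rho_{st}(g)$ and $\beta_{st}(R):=\esssup_{g\in\mathcal{C}^\infty_{st}}E_R(g|\F_s)$, I would substitute $x_{st}(X)=\essinf_{g}\breve\rho_{st}(g-X)-c_{st}$ into the definition $\gamma_{st}(R)=\esssup_{X\in L_\infty(\F_t)}(E_R(X|\F_s)-x_{st}(X))$ and perform the change of variables $Z=g-X$, which for each fixed $g\in L_\infty(\F_t)$ sweeps out all of $L_\infty(\F_t)$. Splitting the resulting joint essential supremum over the two independent upward-directed families — the directedness of $\{E_R(g|\F_s)\}$ coming from the stability/pasting property of $\mathcal{C}^\infty_{st}$, and that of $\{E_R(-Z|\F_s)-\breve\rho_{st}(Z)\}$ from the standard lattice property of the penalty — yields the master identity
\[
\gamma_{st}(R)=\breve\alpha_{st}(R)+\beta_{st}(R)+c_{st}, \qquad R\ll P,\ R_{|\F_s}=P.
\]

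From this identity I would extract two facts. First, by definition $R\in I_{st}$ exactly when $E[\beta_{st}(R)]<\infty$. Second, since $\breve\alpha_{st}\geq 0$ and $\beta_{st}\geq 0$ while $c_{st}\leq\breve\rho_{st}(0)=0$, assertion A1 — which reads $E[c_{st}]\in\R$, because $\essinf_g\rho_{st}(g)=c_{st}+\rho_{st}(0)$ with $\rho_{st}(0)\in L_2(\F_s)$ — is precisely what makes the finiteness of the penalty split cleanly: under A1,
\[
E[\gamma_{st}(R)]<\infty \quad\Longleftrightarrow\quad E[\breve\alpha_{st}(R)]<\infty \ \text{and}\ R\in I_{st}.
\]
Thus, granted A1, the measures appearing in \eqref{base-rep} are exactly those $R$ with $R_{|\F_s}=P$, $E[\breve\alpha_{st}(R)]<\infty$ and $R\in I_{st}$. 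Combining this with Proposition \ref{NGD-rep} — the no-good-deal bound \eqref{M-bound} holds iff every representing measure lies in $\mathfrak{Q}_{st}$ — shows that, given A1, A2 is equivalent to the statement that every such $R$ belongs to $\mathfrak{Q}_{st}$, which is precisely B1 read through the partition $R\in I_{st}$ versus $R\in I_{st}^c$.

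For (A1 and A2) $\Rightarrow$ (B1 and B2): B1 is the reformulation just obtained, while B2 follows because $x_{st}(0)=0$ forces the representing set to be non-empty, so there is an $R_0$ (with restriction $P$ to $\F_s$ and $E[\gamma_{st}(R_0)]<\infty$) which by the characterization satisfies $E[\breve\alpha_{st}(R_0)]<\infty$ and $R_0\in I_{st}$, and which lies in $\mathfrak{Q}_{st}$ by A2. For the converse (B1 and B2) $\Rightarrow$ (A1 and A2): A1 is deduced from B2, since for $R_0\in\mathfrak{Q}_{st}\cap I_{st}$ with $E[\breve\alpha_{st}(R_0)]<\infty$ the defining inequality $\breve\rho_{st}(g)\geq E_{R_0}(-g|\F_s)-\breve\alpha_{st}(R_0)$ gives $c_{st}\geq-\beta_{st}(R_0)-\breve\alpha_{st}(R_0)$, whence taking expectations (using $R_0\in I_{st}$) yields $E[c_{st}]>-\infty$, and $c_{st}\leq 0$ gives A1. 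With A1 available the characterization applies, B1 forces all representing measures into $\mathfrak{Q}_{st}$, and then for $X\geq 0$ one has $x_{st}(X)=\esssup(E_Q(X|\F_s)-\gamma_{st}(Q))\leq\esssup_{Q\in\mathfrak{Q}_{st}}E_Q(X|\F_s)=M_{st}(X)$ (the supremum being over representing $Q$, all in $\mathfrak{Q}_{st}$, and dropping $\gamma_{st}\geq 0$), which is A2.

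The main obstacle I anticipate is Step 1: making the essential-supremum manipulations fully rigorous, in particular justifying the splitting of the joint $\esssup$ into $\breve\alpha_{st}(R)+\beta_{st}(R)$ through the directedness of the two families and avoiding any $\infty-\infty$ ambiguity. This is exactly where $c_{st}$ must be treated as a genuinely finite $\F_s$-measurable quantity — finite $P$-a.s. by the standing Assumption \eqref{techass} and integrable precisely under A1. Once the master identity is secured, the rest is bookkeeping around Proposition \ref{NGD-rep}.
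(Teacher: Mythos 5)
Your proposal is correct and follows essentially the same route as the paper: your ``master identity'' $\gamma_{st}(R)=\breve\alpha_{st}(R)+\beta_{st}(R)+c_{st}$ is exactly the paper's key penalty decomposition (equation \eqref{eq*1}), derived by the same change of variables and splitting of the essential supremum, and both directions (the dichotomy for B1, nonemptiness of the representing set for B2, the penalty inequality for A1, and Proposition \ref{NGD-rep} for A2) match the paper's argument step for step. Your explicit handling of the finiteness of $c_{st}$ via Assumption \eqref{techass} and of the shift $\rho_{st}(0)$ is a welcome clarification of points the paper leaves implicit.
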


\begin{proof}
First we assume that A holds and we prove assertions B.\\
B1)
Recall that $x_{st}(X) = \breve x_{st}(X)$, $X \in L_\infty(\F_t)$. 
Denote $\gamma_{st}$ the minimal penalty for the restriction of $x_{st}$ to $L_\infty({\cal F}_t)$. 
Observe that for $R \ll P, \;R_{|{\cal F}_s}=P$:
\begin{eqnarray}
\gamma_{st}(R) 
& = & \underset{X \in L_\infty({\cal F}_t)} \esssup[E_R(X|{\cal F}_s)-x_{st}(X)] \nonumber\\
& =  &\underset{X \in L_\infty({\cal F}_t)} \esssup [E_R(X|{\cal F}_s)- \underset{g \in \mathcal{C}^{\infty}_{st}}  
\essinf \breve\rho_{st}(g-X)]
+\underset{g \in \mathcal{C}^{\infty}_{st}} \essinf \breve\rho_{st}(g) \nonumber\\
& = & \underset{X \in L_\infty({\cal F}_t)}  \esssup  \underset{g \in \mathcal{C}^{\infty}_{st}} \esssup[E_R(X-g|{\cal F}_s)- 
\breve\rho_{st}(g-X)+E_R(g|{\cal F}_s)]\nonumber\\
&+ &\underset{g \in \mathcal{C}^{\infty}_{st}} \essinf \breve\rho_{st}(g) \nonumber\\
& =  &\underset{Y \in L_\infty({\cal F}_t)}  \esssup [E_R(-Y|{\cal F}_s)- \breve\rho_{st}(Y)]+ 
\underset{g \in \mathcal{C}^{\infty}_{st}} \esssup E_R(g|{\cal F}_s)
+ \underset{g \in \mathcal{C}^{\infty}_{st}} \essinf \breve\rho_{st}(g) \nonumber
\end{eqnarray}
Hence,
\begin{equation}
\gamma_{st}(R)=\breve\alpha_{st}(R)+\underset{g \in \mathcal{C}^{\infty}_{st}} \esssup E_R(g|{\cal F}_s)
+ \underset{g \in \mathcal{C}^{\infty}_{st}} \essinf \breve\rho_{st}(g) 
\label{eq*1}
\end{equation}
From \eqref{eq*1} and (A1), taking $R \ll P$ on ${\cal F}_t$, $R_{|{\cal F}_s}=P$ with $E(\breve\alpha_{st}(R))<\infty$,
we have that
\begin{equation}
E(\gamma_{st}(R))<\infty \Longleftrightarrow E[\underset{g \in \mathcal{C}^{\infty}_{st}} \esssup E_R(g|{\cal F}_s)]<\infty.
\label{eqgammaeq}
\end{equation}
So either 
$E[ \esssup_{g \in \mathcal{C}^{\infty}_{st}} E_R (g|{\cal F}_s)]=\infty$, 
which means that $R \in I_{st}^c$,
or 
$E[ \esssup_{g \in \mathcal{C}^{\infty}_{st}} E_R (g|{\cal F}_s)]<\infty$, and $R \in I_{st}$. In this case, from 
(\ref{eqgammaeq}), assumption A2 and 
Proposition \ref{NGD-rep}, we conclude that $R \in \mathfrak{Q}_{st}$. 

B2) 
From the representation \eqref{base-rep}, there exists a probability measure $R_0$ such that $E(\gamma_{st}(R_0))<\infty$. 
The statement follows from the same arguments as above.

\bigskip
Now the converse, assume that B holds and we prove assertions A.\\
A1) 
Take $R_0 \in \mathfrak{Q}_{st} \cap I_{st}$ such that $E(\breve\alpha_{st}(R_0))<\infty$. 
Then 
\begin{eqnarray}
E[\underset{g \in \mathcal{C}^{\infty}_{st}} \essinf \breve\rho_{st}(g)] 
&\geq & E\big[\underset{g \in \mathcal{C}^{\infty}_{st}} \essinf E_{R_0}(-g|{\cal F}_s)\big]-E(\breve\alpha_{st}(R_0)) \nonumber\\
&= & -E\big[\underset{g \in \mathcal{C}^{\infty}_{st}} \esssup E_{R_0}(g|{\cal F}_s)] \big]-E(\breve\alpha_{st}(R_0))>-\infty\nonumber
\end{eqnarray}
A2) 
Consider $R \ll P$ on ${\cal F}_t$, $R_{|{\cal F}_s}=P$, such that  $E(\gamma_{st}(R))<\infty$. 
Note that $\breve\alpha_{st}(R) \geq -\breve \rho_{st}(0)=0$, and 
$ \esssup_{g \in \mathcal{C}^{\infty}_{st}} E_R (g|{\cal F}_s)]\geq 0$. 
From (\ref{eq*1}), taking expectation, we can see that $E(\breve\alpha_{st}(R))<\infty$ and 
$E[\esssup_{g \in \mathcal{C}^{\infty}_{st}} E_R (g|{\cal F}_s)] <\infty$.
From assumption B1, we have $R \in \mathfrak{Q}_{st}$. 
Then we have 
\begin{equation}
x_{st}(X)= \underset{R \in \mathfrak{Q}_{st}} \esssup[E_R(X|{\cal F}_s)-\gamma_{st}(R)],\;\; X \in \; L_\infty({\cal F}_t)
\label{eqrepinf}
\end{equation}
and $x_{st}$ satisfies the no-good-deal bounds on $L_\infty({\cal F}_t)$.
\end{proof}

Moreover, we have the following result.
\begin{corollary}
Let $(\rho_{st})_{s,t}$ be a fully-dynamic risk measure on $(L_2({\cal F}_t))_t$ such that 
$E(\essinf_{g \in \mathcal{C}^{\infty}_{st}} \rho_{st} (g)) \in \R$.
Assume that the operator $x_{st}(X)$, $X\in L_\infty(\F_t)$, as in \eqref{eqRIP} satisfies the no-good-deal bounds on $L^{\infty}(\F_t)$. 
Then $x_{st}$ has a unique continuous extension 
$$
x^{ngd}_{st}: L_2({\cal F}_t) \longrightarrow L_2({\cal F}_s)
$$ 
satisfying the 
no-good-deal bounds on $L_2({\cal F}_t)$.

Furthermore, this extension admits the representation
\begin{equation*}
x^{ngd}_{st}(X) = \underset{Q\in \mathfrak{Q}_{st}\cap I_{st}}\esssup \Big( E_Q (X\vert \F_s) - \gamma_{st}(Q) \Big), 
\quad X \in L_2(\F_t).
\end{equation*}
\end{corollary}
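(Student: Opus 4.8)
The plan is to observe that the two hypotheses of the corollary are exactly the assertions A1 and A2 of Theorem \ref{thmNGD} (the no-good-deal bound on $L_\infty(\F_t)$ being precisely \eqref{M-bound}), so that the bulk of the work has already been done: Corollary \ref{cor-NGD} produces the $L_2$-extension together with the no-good-deal bounds and a representation over $\mathfrak{Q}_{st}$, and the only genuinely new point is to shrink the index set of that representation from $\mathfrak{Q}_{st}$ to $\mathfrak{Q}_{st}\cap I_{st}$. I would organise the argument in three moves: verify the hypotheses of Corollary \ref{cor-NGD}; quote that corollary to obtain the extension, the bounds, and the representation \eqref{base-rep-2}; and then refine the index set using the penalty identity \eqref{eq*1}.

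First I would check that $x_{st}\colon L_\infty(\F_t)\longrightarrow L_\infty(\F_s)$ is a convex price operator in the sense of Definition \ref{convex-price}. Lemma \ref{Linf}(2) gives that $x_{st}(X)\in L_\infty(\F_s)$ for every $X\in L_\infty(\F_t)$; Proposition \ref{ri-prop} supplies monotonicity, convexity, the projection property and weak $\F_s$-homogeneity; and Proposition \ref{Fatouinf} supplies the Fatou property in the case $p=\infty$. Since the hypothesis of the corollary is that this operator satisfies the no-good-deal bound \eqref{M-bound}, all the assumptions of Corollary \ref{cor-NGD} are met. Applying it verbatim yields a unique $L_2$-continuous extension $x^{ngd}_{st}\colon L_2(\F_t)\longrightarrow L_2(\F_s)$, the sandwich \eqref{sand} — hence the no-good-deal bounds — for all $X\in L_2(\F_t)$, and the representation \eqref{base-rep-2} with the essential supremum taken over $\mathfrak{Q}_{st}$.

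It remains to replace $\mathfrak{Q}_{st}$ by $\mathfrak{Q}_{st}\cap I_{st}$. Here I would argue that in \eqref{base-rep}–\eqref{base-rep-2} only measures with $E[\gamma_{st}(Q)]<\infty$ contribute to the essential supremum. Using the identity \eqref{eq*1}, namely $\gamma_{st}(R)=\breve\alpha_{st}(R)+\esssup_{g\in\mathcal{C}^{\infty}_{st}}E_R(g\vert\F_s)+\essinf_{g\in\mathcal{C}^{\infty}_{st}}\breve\rho_{st}(g)$, together with $\breve\alpha_{st}(R)\geq-\breve\rho_{st}(0)=0$, with $\esssup_{g\in\mathcal{C}^{\infty}_{st}}E_R(g\vert\F_s)\geq 0$ (as $0\in\mathcal{C}^{\infty}_{st}$), and with hypothesis A1 (which makes $E[\essinf_{g}\breve\rho_{st}(g)]$ finite), one sees that $E[\gamma_{st}(R)]<\infty$ forces both $E(\breve\alpha_{st}(R))<\infty$ and $R\in I_{st}$; assertion B1 of Theorem \ref{thmNGD}, available because A holds, then gives $R\in\mathfrak{Q}_{st}$. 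Thus every contributing measure lies in $\mathfrak{Q}_{st}\cap I_{st}$, whereas the measures in $\mathfrak{Q}_{st}\setminus I_{st}$ carry $E[\gamma_{st}(Q)]=\infty$ and drop out. This produces the representation over $\mathfrak{Q}_{st}\cap I_{st}$ on $L_\infty(\F_t)$, and the same $L_2$-continuity of the right-hand side used in Corollary \ref{cor-NGD} extends it to all $X\in L_2(\F_t)$.

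The hard part will be the bookkeeping in this last move: making rigorous that truncating the index set to $\mathfrak{Q}_{st}\cap I_{st}$ leaves the value of the essential supremum unchanged (i.e.\ controlling the measures whose penalty is infinite), and checking that the truncated right-hand side is still $L_2$-continuous so that the passage to the $L_2$-extension is legitimate. Everything else is a direct assembly of Lemma \ref{Linf}, Proposition \ref{ri-prop}, Proposition \ref{Fatouinf}, Corollary \ref{cor-NGD} and the penalty computation \eqref{eq*1} already carried out in the proof of Theorem \ref{thmNGD}.
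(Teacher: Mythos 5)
Your proposal is correct and takes essentially the same route as the paper: the paper's own (two-line) proof likewise obtains the extension and the $L_2$ no-good-deal bounds directly from Corollary \ref{cor-NGD}, and gets the refined representation over $\mathfrak{Q}_{st}\cap I_{st}$ from \eqref{base-rep-2} combined with ``the arguments of the proof of Theorem \ref{thmNGD}'' --- precisely the penalty identity \eqref{eq*1} and assertion B1 that you invoke in your third step. Your write-up simply makes those cited arguments explicit.
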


\begin{proof}
The extension follows directly from Corollary \ref{cor-NGD}.
The representation follows from \eqref{base-rep-2} in Corollary \ref{cor-NGD}  and the arguments of the proof of Theorem \ref{thmNGD}.
\end{proof}

We remark that we do not know whether the extension $x^{ngd}_{st}$ is also a risk-indifferent price. 
We recall that in Section \ref{SecRI} we had assumed that the fully-dynamic risk measure $(\rho_{st})_{s,t}$ is strong time-consistent and $\rho_{0T}$ is dominated and sensitive. With this, starting from $x_{st}$ 
on $L_\infty(\F_t)$ we obtained a risk-indifferent extension:
$$
x^{ri}_{s,t}: L^c_t \longrightarrow L^c_s
$$
where $L_2(\F_t) \subseteq L^c_t$. See Definition \ref{defextx} and Theorem \ref{thmextrho}.
Now we study the relations between these two extensions.

\begin{theorem}
\label{last}
Let $(\rho_{st})_{s,t}$ be a strong time-consistent fully-dynamic convex risk measure such that hypotheses B in Theorem \ref{thmNGD} are satisfied. Assume 
$\rho_{0T}$ is dominated (for a constant $K >0$) and sensitive. 
Then
$$
x^{ri}_{st}(X) = x^{ndg}_{st}(X) \quad P-a.s., \quad X\in L_2(\F_t),
$$
and $x^{ri}_{st}(X) \in L_2(\F_s)$, $X\in L_2(\F_t)$.

Moreover, the following representation holds:
\begin{equation}
\label{rep-riskm}
\rho_{st}(X) = \underset{R \in (\mathfrak{Q}_{st}\cap B^K_{st}\cap I_{st}) \cup ( B^K_{st}\cap I^c_{st}) }
\esssup \Big( E_R(-X \vert \F_s) -  \alpha_{st}(R) \Big), 
\end{equation}
where
$$
B^K_{s,t} := \Big\{ R \ll P: \, R_{\vert \F_s} = P, \: \underset{Q \in {\cal Q}}\sup E_Q\big( E_R(X \vert \F_s) \big)
\leq K \Vert X \Vert_2, \, \forall X \in L_2(\F_t) \Big\},
$$
and
\begin{equation}
\label{rep-RI}
x_{st}^{ri}(X) = \underset{R \in \mathfrak{Q}_{st}\cap I_{st}} \esssup \Big( E_R(X \vert \F_s) - \breve \gamma_{st}(R) \Big).
\end{equation}
\end{theorem}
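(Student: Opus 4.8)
The plan is to exploit that both $x^{ri}_{st}$ and $x^{ndg}_{st}$ are extensions of one and the same operator $x_{st}:L_\infty(\F_t)\to L_\infty(\F_s)$, and that on $L_2(\F_t)$ the two relevant topologies are comparable. Since hypotheses B hold, Theorem \ref{thmNGD} yields the equivalent assertions A; in particular A1 and A2 are satisfied, so the corollary following Theorem \ref{thmNGD} applies and produces the $L_2$-continuous extension $x^{ndg}_{st}:L_2(\F_t)\to L_2(\F_s)$ together with its representation over $\mathfrak{Q}_{st}\cap I_{st}$. On the other hand, the domination and sensitivity of $\rho_{0T}$ together with strong time-consistency give, through Definition \ref{defextx} and Proposition \ref{propxL}, the $c$-Lipschitz extension $x^{ri}_{st}:L^c_t\to L^c_s$, and $L_2(\F_t)\subseteq L^c_t$ by Lemma \ref{Lemma L^c}.

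First I would prove the identity $x^{ri}_{st}=x^{ndg}_{st}$ on $L_2(\F_t)$. Fix $X\in L_2(\F_t)$ and let $X_n:=(X\wedge n)\vee(-n)\in L_\infty(\F_t)$, so that $X_n\to X$ in $L_2(\F_t)$ and, since $c(\cdot)\leq K\Vert\cdot\Vert_2$ by Lemma \ref{Lemma L^c}, also $c(X_n-X)\to 0$. On $L_\infty(\F_t)$ both extensions coincide with $x_{st}$, hence $x^{ri}_{st}(X_n)=x_{st}(X_n)=x^{ndg}_{st}(X_n)$. By the $c$-Lipschitz property \eqref{eqceqcontx} the left-hand side converges to $x^{ri}_{st}(X)$ in $L^c_s$, while $x^{ndg}_{st}(X_n)\to x^{ndg}_{st}(X)$ in $L_2(\F_s)$ and therefore also in $L^c_s$ (again $c\leq K\Vert\cdot\Vert_2$). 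Uniqueness of the $c$-limit yields $x^{ri}_{st}(X)=x^{ndg}_{st}(X)$ $P$-a.s., and as the common value lies in $L_2(\F_s)$ we also get $x^{ri}_{st}(X)\in L_2(\F_s)$. The representation \eqref{rep-RI} then follows at once by transferring the representation of $x^{ndg}_{st}$ from the corollary after Theorem \ref{thmNGD}, after identifying $\breve\gamma_{st}$ with the minimal penalty of $x_{st}=\breve x_{st}$ on $L_\infty(\F_t)$.

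For the representation \eqref{rep-riskm} of $\rho_{st}$ itself I would start from \eqref{Rep2} in Proposition \ref{Prop2rep}, namely $\rho_{st}(X)=\esssup_{R\in\Pp}\big(E_R(-X\vert\F_s)-\alpha_{st}(R)\big)$, and show that the index set $\Pp$ is contained in $(\mathfrak{Q}_{st}\cap B^K_{st}\cap I_{st})\cup(B^K_{st}\cap I^c_{st})$. For $R\in\Pp$ one has $\alpha_{st}(R)\in L_2(\F_s)$, hence $E(\breve\alpha_{st}(R))=E(\alpha_{st}(R))+E(\rho_{st}(0))<\infty$ because $\rho_{st}(0)\in L_2(\F_s)$ and $\breve\alpha_{st}(R)=\alpha_{st}(R)+\rho_{st}(0)$ by \eqref{eq alpha}; the dichotomy B1 then places $R$ in $\mathfrak{Q}_{st}\cap I_{st}$ or in $I^c_{st}$. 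Moreover $\Pp\subseteq\wPp\subseteq B^K_{st}$, the last inclusion coming from item 3 of Proposition \ref{Prop2rep} with $p=q=2$, which gives $\sup_{Q\in\mathcal{Q}}E_Q(E_R(X\vert\F_s))=E(hX)\leq K\Vert X\Vert_2$. Since every $R\ll P$ satisfies $E_R(-X\vert\F_s)-\alpha_{st}(R)\leq\rho_{st}(X)$ by the very definition of the minimal penalty, enlarging the index set from $\Pp$ to the target set cannot increase the essential supremum, while the inclusion $\Pp\subseteq(\text{target set})$ shows it cannot decrease it; the two bounds squeeze the essential supremum to $\rho_{st}(X)$, which is \eqref{rep-riskm}.

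The step I expect to be most delicate is this containment argument, precisely the passage between the condition defining $\wPp$ (a uniform bound $\sup_{Q\in\mathcal{Q}}E_Q(\alpha_{st}(R))<\infty$ under the equivalent measures $Q$) and the condition $E(\breve\alpha_{st}(R))<\infty$ under $P$ that is required to invoke B1. Working over $\Pp$ rather than $\wPp$ circumvents this, since $\alpha_{st}(R)\in L_2(\F_s)\subseteq L_1(\F_s,P)$ makes the $P$-integrability immediate, and this is why I would base the squeeze on \eqref{Rep2}. One must still check that the extra measures in $B^K_{st}\cap I^c_{st}$, which are only $\ll P$ and not necessarily $\sim P$, do no harm; but this is automatic, as the minimal-penalty inequality holds for all $R\ll P$.
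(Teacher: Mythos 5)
Your proposal is correct and follows essentially the same route as the paper: the identity $x^{ri}_{st}=x^{ndg}_{st}$ is obtained exactly as in the paper from the $c$-Lipschitz bound $c(|x^{ri}_{st}(X)-x^{ri}_{st}(Y)|)\leq c(|X-Y|)\leq K\Vert X-Y\Vert_2$ together with density of $L_\infty(\F_t)$ in $L_2(\F_t)$ (you merely spell out the truncation argument), and the representation \eqref{rep-riskm} is obtained, as in the paper, by starting from \eqref{Rep2} over $\Pp$, placing $\Pp$ inside $(\mathfrak{Q}_{st}\cap B^K_{st}\cap I_{st})\cup(B^K_{st}\cap I^c_{st})$ via the finiteness of $E(\breve\alpha_{st}(R))$, the dichotomy B1, and the $B^K_{st}$-inclusion, and then squeezing with the minimal-penalty inequality. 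The only cosmetic difference is that you justify $\Pp\subseteq B^K_{st}$ through item 3 of Proposition \ref{Prop2rep} plus H\"older, whereas the paper cites Proposition \ref{propcomp} and Proposition \ref{prop1}; both are equivalent and both appear elsewhere in the paper's own arguments.
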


\begin{proof}
From Proposition \ref{propxL} item 1 and the definition of the semi-norm $c$, we have that
$$
c(\vert x^{ri}_{st}(X) - x^{ri}_{st}(Y)\vert ) \leq c(\vert X-Y \vert) \leq K \Vert X-Y \Vert_2,
$$
for all $X,Y \in L_2(\F_t)$.
Then $x^{ri}_{st}$ is continuous for the $L_2$-norm and we can conclude 
$x^{ri}_{st}(X) = x^{ndg}_{st}(X)$ $P-a.s.$ and then also $x^{ri}_{st}(X) \in L_2(\F_s)$, for all $ X\in L_2(\F_t)$.

The representation 
$$
\rho_{st}(X) = \underset{R \in {\cal P}_{st} }\esssup \Big( E_R(-X \vert \F_s) -  \alpha_{st}(R) \Big), \quad X\in L_2(\F_t),
$$
is derived from Proposition \ref{Prop2rep} item 2.
We observe that ${\cal Q} = \tilde {\cal P}_{0T}$ and that for any $Q\in {\cal Q}$ we have 
$Q_{\vert \F_s}\in \tilde{\cal P}_{0s}$, from  Lemma \ref{LemmaER} item 2.
Moreover, from item 1 in Lemma \ref{LemmaER} we have that any $R \in \tilde{\cal P}_{st}$ is $R = \bar R_{\vert \F_t}$, i.e. the restriction to $\F_t$ of some $\bar R \in \tilde{\cal P}_{sT}$ .

Now take any $R \in {\cal P}_{st} \subseteq \tilde{\cal P}_{st}$. From Proposition \ref{propcomp} we have that, 
for any $Q\in {\cal Q}$, there exists $S\in {\cal Q}$ such that
$$
E_Q\big( E_R(X\vert \F_s) \big) = E_S (X) \leq K \Vert X \Vert_2, \quad X \in L_2(\F_t),
$$
where the last inequality is justified by Proposition \ref{prop1}.
Then $R \in B^K_{s,t}$.
Moreover, we observe that the definition of minimal penalty implies that $\breve \alpha_{st}(R) \leq 
\alpha_{st}(R) + \rho_{st}(0)$
and then $E(\breve\alpha_{st}(R))<\infty$ comes from the definition of ${\cal P}_{st}$. This allows to use property B1 
from 
Theorem \ref{thmNGD} and we can conclude that $ R\in ((\mathfrak{Q}_{st}\cap I_{st}) \cup I^c_{st})\cap B^K_{st}$. 
Hence representation \eqref{rep-riskm} follows. 
\end{proof}

\section{Conclusions.}
In this paper we study risk-indifference pricing for financial claims from a dynamic point of view. Our setting is kept free from the particular choice of underlying price dynamics and we allow for the possibility of modelling changes of risk evaluations by the use of a fully-dynamic risk measure to derive the risk-indifference prices.

\vspace{2mm}
Our results show that, if the risk-indifference price operators $(x_{st})_{s,t}$ are in $(L_\infty(\F_t))_t$, then they are a convex price system.
However, $(L_\infty(\F_t))_t$ is sometimes too restrictive for financial modelling. But when working with prices in  $(L_p(\F_t))_t$ with $p<\infty$, then we see that the risk-indifference prices $(x_{st})_{s,t}$ do not possess all the properties, as single operators (see the Fatou property) and as a family (see the time-consistency). The core issue is related to the very structure of the domain of these operators.

\vspace{2mm}
Then, for $p<\infty$, we resolve to extend these price operators to a better suited framework $(L^c_t)_t$, which is described using capacities. In doing this operation it is needed to preserve as much as possible the meaning of risk-indifference. Then we first extend appropriately the whole family of underlying risk measures. We do this in a way that guarantees that the new extended fully-dynamic risk measure is also strong time-consistent. This, in turn, will guarantee the properties of the extended risk-indifference price operators.
In this new framework we see that the extended prices $(\tilde x_{st})_{s,t}$ have all the properties on the extended spaces $(L^c_t)_t$ of a convex price system.

\vspace{2mm}
From a modelling point of view it is however still desirable to have risk-indifference prices in $(L_p(\F_t))_t$.
Then we look into a more specific economic context and we study risk-indifference prices that are also no-good-deal prices.
Our results characterise those strong time-consistent fully-dynamic risk measures that provide no-good-deal risk-indifference prices. In this case, all the operators are framed in the $(L_2(\F_t))_t$ setting.

\bigskip
\bigskip
{\bf Acknowledgements:} We wish to thank Nicole El Karoui, Marco Frittelli, Marco Maggis, and Emanuela Rosazza-Gianin for the interesting discussions and their fruitful comments on this work. 
This research has benefit of the support of CMAP, Ecole Polytechnique and of the projects FINEWSTOCH (239019) and STORM (274410) of the Research Council of Norway (NFR).

\bibliographystyle{plain}

\begin{thebibliography}{9}

\bibitem{AP}
B. Acciaio and I. Penner. 
Dynamic risk measures. {\it {A}dvanced {M}athematical {M}ethods for {F}inance}, Eds. G. Di Nunno and B. \O ksendal. pp. 1--34, Springer, Heidelberg, 2011.

\bibitem{ADR}
S. Albeverio, G. {Di Nunno}, and Yu.A. Rozanov.
\newblock Price operators analysis in ${L}_p$-spaces.
\newblock {\em Acta Applicandae Mathematicae}, 89, pp. 85--108, 2005.

\bibitem{BEK}
P. Barrieu and N. El Karoui. Pricing, hedging and optimally designing derivatives via minimization of risk measures,
in: Rene Carmona (Eds.), Volume on Indifference Pricing, Princeton University Press, 2008. 

\bibitem{BL2000}
A.E. Bernardo and O. Ledoit.
\newblock Gain, loss, and asset pricing.
\newblock {\em The Journal of Political Economy}, 108, pp. 144--172, 2000.

\bibitem{BF2009}
S.~Biagini and M.~Frittelli. 
On the extension of the Namioka-Klee theorem and on the Fatou property for risk measures. In {\it Optimality and Risk: Modern trends in Mathematical Finance. The Kabanov Festschrift}. Eds. F.~Delbaen, M.~Rasony, and C.~Stricker. Springer, pp. 1--28, 2009.

\bibitem{bielecki}
T. Bielecki, I. Cialenco, and M. Pitera. 
A survey of time consistency of dynamic risk measures and dynamic performance measures in discrete time: {LM}-measure perspective,
{Probab. Uncertain. Quant. Risk},
{2},paper nr. 3, {2017}.

\bibitem{BCP} 
T. Bielecki, I. Cialenco, and M. Pitera.
A unified approach to time consistency of dynamic risk measures and dynamic performance measures in discrete time. {\em Mathematics of Operations Research} 
43 (1), pp. 204-221, 2018.

\bibitem{BN-preprint} 
J. Bion-Nadal. 
Conditional risk measure and robust representation of convex conditional risk measures. 
CMAP Preprint 557, 2004.

\bibitem{BN01} 
J. Bion-Nadal. Dynamic risk measures: time-consistency and risk measures from BMO martingales.  {\em Finance and Stochastics}, 12, pp. 219--244, 2008.

\bibitem{BN02} J. Bion-Nadal. Time-consistent Dynamic Risk Processes. {\em Stochastic Processes and their Applications} 119 (2), pp. 633--654, 2009.

\bibitem{JBN_BA} J. Bion-Nadal.  Bid-ask dynamic pricing in financial markets with transaction costs and liquidity risk.
{\em Journal of Mathematical Economics} 45 (11), pp. 738--750, 2009. 

\bibitem{BN03} 
J. Bion-Nadal. Dynamic risk measures and path-dependent second order PDEs. In F.E. Benth and G. Di Nunno Eds. {\it Stochastics of environmental and financial economics}. Springer Proc. Math. Stat., 138, pp. 147--178. Springer, 2016.

\bibitem{BNDN} 
J. Bion-Nadal and G. Di Nunno. Dynamic no-good-deal pricing measures and extension theorems for linear operators on $L_\infty$. {\em Finance and Stochastics} 17 (3), pp. 587--613, 2013.

\bibitem{BNDN2} 
J. Bion-Nadal and G. Di Nunno. Representation of convex operators and their static and dynamic sandwich extensions. 
{\em Journal of Convex Analysis} 24 (4), pp. 1375--1405, 2017.

\bibitem{BNK} 
J. Bion-Nadal and M. Kervarec. 
Risk measuring under model uncertainty. 
{\em The Annals of Applied Probability} 22 (1), pp. 213--238, 2012.

\bibitem{Cerny1999}
A. Cerny.
Generalized Sharpe Ratio and costisten good-deal restrictions in a model of continuous trading. Discussion Paper SWP9902/F, Imperial College Management School.

\bibitem{Cerny-Hodges2002}
A. Cerny and S.D. Hodges.
The theory of good-deal pricing in financial markets. In H. Geman, D. Madan, S. Pliska and T. Vorst Eds, {\it Mathematical Finance - Bachelier Congress 2000}, pp. 175--202. Springer, 2002.

\bibitem{CDK2006}
P. Cheridito, F. Delbaen, and M. Kupper.
Dynamic monetary risk measure for bounded discrete-time processes. 
\emph{Electronic J. Prob.} 11, pp. 57--106, 2006.

\bibitem{CSR2000}
J.H. Cochrane and J. Saa Requejo.
\newblock Beyond arbitrage: Good deal asset price bounds in incomplete markets.
\newblock {\em The Journal of Political Economy} 108, pp. 1--22, 2000.

\bibitem{DPRG}
F. Delbaen, S. Peng, and E. Rosazza Gianin.
Representation of the penalty term of dynamic concave utilities. 
{\em Finance Stoch.} 14 (3), pp. 449--472, 2010. 

\bibitem{DM} C. Dellacherie and P.A. Meyer. {\em Probabilit\'es et potentiel}, Hermann Paris, 1980 (Chapter V-VIII - Th\'eorie des martingales.); English translation: Probabilities and potential, north Holland, Amsterdam, 1982.

\bibitem{DS2005}
K. Detlefsen and G. Scandolo. Conditional and dynamic convex risk measures. {\em Finance and Stochastics} 9, pp. 539--561, 2005.

\bibitem{DE08}
G.~{Di Nunno} and I.B. Eide. 
Lower and upper bounds of martingale measure densities in continuous time markets. {\em Mathematical Finance} 21, pp. 475--492, 2011.

\bibitem{DDM}
C. Doleans-Dade and P.A. Meyer. Une caract\'erisation de BMO.  In: Lecture Notes in Mathematics, vol. 581, 1977.

\bibitem{ES2011}
R.J. Elliott and T.K. Siu. A risk-based approach for pricing American options under a generalized Markov regime-switching model. {\it Quant. Finance} 11 (11), pp. 1633--1646, 2011.

\bibitem{FiSv} 
D. Filipovic and G. Svindland. 
The canonical model space for law-invariant convex risk measures is $L^1$. 
{\em Mathematical Finance} 22, pp. 585--589, 2012. 
Extended working paper version. No. 2, March 2008, Vienna Institute of Finance.

\bibitem{FM}
M. Frittelli and M. Maggis. Conditional Certainty Equivalent. {\it Int. J. Theor. Appl. Fin.}, 14 (1) pp 41-59, 2011.

\bibitem{FS04}
H. F\"ollmer and A. Schied. {\it Stochastic Finance: An Introduction in Discrete Time}. Second edition, de Gruyter studies in mathematics 27, 2002.

\bibitem{GH2007} 
M.R. Grasselli and T.R. Hurd. Indifference pricing and hedging for volatility derivatives. {\it Appl. Math. Finance} 14 (4), pp. 303--317, 2007.

\bibitem{Hodges1998}
S.D. Hodges.
A generalization of the Sharpe ration and its applications to valuation bounds and risk measures.
FORC Preprint 98/88, University of Warwick, 1998.

\bibitem{JK} 
S. Jaschke and U. K\"uchler.
\newblock Coherent risk measures and good deal bounds
\newblock {\em Finance and Stochastics} 5, pp. 181--200, 2001.

\bibitem{KR} 
M. Kaina and L. R\"uschendorf. On convex risk measures on $L^p$-spaces. 
{\em Math. Meth. Oper. Res.} 69, pp. 475--495, 2009.

\bibitem{Ka}
N. Kazamaki, Continuous Exponential Martingales and BMO. In: Lecture Notes in Mathematics, vol. 1579, 1994.

\bibitem{KS}
S. Kl\"oppel and M. Schweizer. 
Dynamic indifference valuation via convex risk measures. 
{\em Mathematical Finance} 17 (4), pp. 599--627, 2007.

\bibitem{Ko}
M. Kobylanski. Backward stochastic differential equations and partial differential equations with
quadratic growth, Annals of Probability 28, 558--602, 2000.

\bibitem{Ref1}
F.-B.Liebrich and G. Svindland.
Model spaces for risk measures. 
Insurance Math. Econom. 77, pp. 150–165, 2017. 

\bibitem{MZ1}
M. Musiela and T. Zariphopoulou.
Chapter 1. The single period binomial model. In {\it Indifference pricing: theory and applications}. Edited by 
R. Carmona. Princeton University Press, 2008.

\bibitem{MZ2}
M. Musiela and T. Zariphopoulou.
Optimal asset allocation under forward exponential performance criteria. In {\it Markov processes and related topics: a Festschrift for Thomas G. Kurtz}, pp. 285--300, 
IMS Collect., 4, Inst. Math. Statist., 2008. 

\bibitem{Neveu}
J. Neveu. {\it Discrete Parameter Martingales.}
North Holland, 1974.

\bibitem{OS2009}
B. {\O}ksendal and A. Sulem. Risk indifference pricing in jump diffusion markets. {\it Math. Finance} 19 (4), pp. 619--637, 2009.

\bibitem{Ref2}
K. Owari. Maximum Lebesgue extension of monotone convex functions
Journal of Functional Analysis, 266 (6), pp. 3572--3611, 2014.

\bibitem{PP}
E. Pardoux and S. Peng. Adapted solution of a backward stochastic differential equation, Systems and
Control Letters 14, 55--61, 1990.

\bibitem{Peng}
S. Peng. Backward SDE and related g -expectations, in backward stochastic differential equations. In
Backward Stochastic Differential Equations. Eds: N. El-Karoui and L. Mazliak. Pitman Research Notes
in Mathematics Series 364 (1997).

\bibitem{Peng}
S. Peng.
\newblock Nonlinear expectations, nonlinear evaluations and risk measures.
\newblock {\em Lecture Notes in Mathematics} 1856, pp. 165--253, 2004.

\bibitem{Royer}
M. Royer. Backward stochastic differential equations with jumps and related non-linear expectations, {\em Stochastic
Processes and Their Applications} 116, pp.1358--1376, 2006.

\bibitem{RG2006}
E. Rosazza Gianin. 
Risk measures via g-expectations. 
\emph{Insurance Math. Econom.} 39 (1), pp. 19--34, 2006. 

\bibitem{Xu2006}
M. Xu. Risk measure pricing and hedging in incomplete markets. {\em Annals of Finance} 2 (1), pp 51--71, 2006.



\end{thebibliography}

\end{document}